\def\Bbb{\mathbb}
\theoremstyle{plain}
\newtheorem{theorem}{Theorem}[section]
\newtheorem{corollary}[theorem]{Corollary}
\newtheorem{lemma}[theorem]{Lemma}
\newtheorem{prop}[theorem]{Proposition}
\theoremstyle{definition}
\newtheorem{definition}{Definition}[section]
\theoremstyle{remark}
\newtheorem{remark}{Remark}[section]
\numberwithin{equation}{section}
\theoremstyle{example}
\newcounter{marnote}
\def\o{\overline}
\def\u{\underline}
\def\disp{\displaystyle} \def\s{\setlength\arraycolsep{2pt}}
\def\b{\backslash}\def\dint{\displaystyle\int}
\begin{document}

\begin{center}{\Large The exterior Dirichlet problems of Hessian quotient equations}
\end{center}
\centerline{\large  Limei Dai \quad Jiguang Bao \quad Bo Wang }
\vspace{5mm}

\begin{minipage}{140mm}{\footnotesize {\small\bf Abstract:} {\small
In this paper, we study the Dirichlet problem of Hessian quotient equations in exterior domains. By estimating the eigenvalues of the solution, the necessary and sufficient conditions on existence of radial solutions are obtained. Applying the solutions of ODE, the viscosity subsolutions and supersolutions are constructed and then the existence of viscosity solutions for exterior problems is established by the Perron's method.}}

 {\small{\bf Keywords:} Hessian quotient equations; exterior Dirichlet problem; radial solutions; asymptotic behavior; necessary and sufficient conditions}

{\small{\bf 2020 MSC.} 35J60, 35B40}

\end{minipage}

\section{Introduction}

%\setcounter{chapter}{1} \setcounter{equation}{0}
%\setcounter{remark}{0}\setcounter{definition}{0}
% \quad\quad

In this paper, we study the exterior Dirichlet problem of the Hessian quotient equations
{\s\begin{eqnarray}
\vspace{2mm} \label{hq1}\frac{S_k(D^2u)}{S_l(D^2u)}&=&g(x)\
\ \mbox{in}\ \
\mathbb{R}^n\backslash\o{\Omega},\\
\vspace{2mm} \label{hq2}u&=&\phi\ \ \mbox{on}\ \ \partial \Omega,
\end{eqnarray}}where $\Omega$ is a bounded set in $\mathbb{R}^n$, $0\leq l<k\leq
n,n\geq 2$, $S_j(D^2u)$
is the $j$th elementary symmetric function of the
eigenvalues $\lambda=(\lambda_1,\lambda_2,\cdots,\lambda_n)$ of Hessian matrix
$D^2u$, i.e.,
$$S_j(D^2u)=\sigma_j(\lambda(D^2u))=\sum_{1\leq
i_1<\cdots<i_j\leq
n}\lambda_{i_1}\cdots\lambda_{i_j},j=1,2,\cdots,n,$$
and $g\in C^{0}(\mathbb{R}^{n}\backslash\Omega)$ is positive, $ \phi\in C^{2}(\partial\Omega)$.
For $l=0$, we set $S_0(D^2u)\equiv 1.$

 The Hessian quotient equation \eqref{hq1} is fully
 nonlinear and is closely connected with the geometric
 problem. Some well-known equations are its special cases. If $l=0$, \eqref{hq1} is the $k$-Hessian equation. Particularly, \eqref{hq1} is the Poisson
 equation if $k=1,l=0$, while it is the Monge-Amp\`{e}re equation for $k=n,l=0$.
 If $k=n=3,l=1$, \eqref{hq1} becomes the special Lagrangian equation $\mbox{det} D^2u=\Delta u$ which
stems from special Lagrangian geometry \cite{HL}: let $u$ be a solution of \eqref{hq1}, then the graph of
 $Du$ over $\mathbb{R}^3$ in $\mathbb{C}^3$ is a special Lagrangian submanifold in $\mathbb{C}^3$, that is, its mean curvature
 vanishes everywhere and the complex structure on $\mathbb{C}^3$ sends the tangent
 space of the graph to the normal space at every point. So \eqref{hq1} has attracted a lof of attention, see \cite{BCGJ, CNS, LB, T} and the references therein.

 A classical theorem of Monge-Amp\`{e}re equation states that any convex classical solution of $\det D^2u=1$ in $\mathbb{R}^n$ must be a quadratic polynomial which is proved by
 J\"{o}rgens \cite{j} ($n=2$), Calabi \cite{c} ($n\leq 5$) and Pogorelov \cite{p} ($n\geq 2$). One can also refer to the related results \cite{cy, ca, jx, tw1}. In 2003, Caffarelli and Li \cite{CL} made the extension of J\"{o}rgens-Calabi-Pogorelov theorem and also investigated the existence of solutions for the exterior Dirichlet problem of Monge-Amp\`{e}re equation
\begin{equation}\label{ma}\begin{cases}\det D^2u=1,\quad x\in\mathbb{R}^n\b\o{\Omega},\\
u=\phi,\quad x\in\partial\Omega.
\end{cases}\end{equation}
They obtained that if $\Omega$ is a smooth, bounded, strictly convex open subset and $\phi\in C^2(\partial\Omega)$, then for any given $b\in \mathbb{R}^n$ and any given $n\times n$ real symmetric positive definite matrix $A$ with $\det A=1$, there exists some constant $c^*$ depending only on $n,\Omega,\phi,b$ and $A$, such that for every $c>c^*$ there exists a unique function $u\in C^{\infty}(\mathbb{R}^n\b\o{\Omega})\cap C^0(\o{\mathbb{R}^n\b\Omega})$ which satisfies \eqref{ma} and the asymptotic behavior at infinity
$$\limsup_{|x|\to\infty}\left(|x|^{n-2}\left|u(x)-\left(\frac{1}{2}x^TAx+b\cdot x+c\right)\right|\right)<\infty.$$
Since then, there have been extensive studies of the exterior problem for the fully nonlinear elliptic equations. In 2011, Dai and Bao \cite{DB} ($n\geq 3$), Dai \cite{d1} ($n\geq 3$) studied the exterior Dirichlet problem of Hessian equation
\begin{equation}\label{Hessian}S_k(D^2u)=1
\end{equation}
 and got the existence and uniqueness of the viscosity solutions with the asymptotic behavior.
 Bao, Li and Li \cite{BLL} ($n\geq 3$), Cao-Bao \cite{CB} ($n\geq 3$) studied the viscosity solutions with the generalized asymptotic behavior of the exterior Dirichlet problem for Hessian equation. The subsequent results following \cite{CL} of the exterior Dirichlet problem for Monge-Amp\`{e}re equations can be referred to \cite{BL01,BLZ1,BLZ2,bxz,h,LL}; For the Hessian quotient equations
$$\frac{S_{k}(D^{2}u)}{S_{l}(D^{2}u)}=1,$$
where $0\leq l<k\leq n, n\geq 3$, the first author \cite{D} obtained the existence of viscosity solutions with the asymptotic behavior
\begin{equation}\label{hq-asy}\limsup_{|x|\to\infty}\left(|x|^{k-l-2}\left|u(x)-\left(\frac{\hat{a}}{2}|x|^2+c\right)\right|\right)<+\infty\end{equation}
for the exterior Dirichlet problem with $\hat{a}=(C_n^l/C_n^k)^{\frac{1}{k-l}}, k-l\geq 3$. Li and the first author \cite{LD} studied the viscosity solutions with the asymptotic behavior \eqref{hq-asy} for $k\leq (n+1)/2$ which includes $k-l=1$ and $k-l=2$ not included in \cite{D}. Recently, Li and Li \cite{LLX1} considered the viscosity solutions with the generalized asymptotic behavior
$$\limsup_{|x|\to\infty}\left(|x|^{m-2}\left|u(x)-\left(\frac{1}{2}x^{T}Ax+b^Tx+c\right)\right|\right)<+\infty$$
with $m\in (2,n]$ and $S_k(A)/S_l(A)=1.$ One can also refer to \cite{llz}. We would like to remark that for $n=2$, the exterior Dirichlet problem of Monge-Amp\`{e}re equations  can be referred to the earlier works by Ferrer, Mart\'{i}nez, and Mil\'{a}n \cite{FMM1,FMM2} using the complex variable methods and the works by Delano\"{e} \cite{De}.

For convenience, we first restrict the class of functions. Define
 $$\Gamma_k=\{\lambda\in {\bf{\mathbb{R}}}^n|\sigma_j(\lambda)>0,j=1,2,\cdots,k\}.$$
Let $u\in C^2(\mathbb{R}^n\backslash \o{\Omega})$ and $\lambda=\lambda(D^2u)=(\lambda_1,\lambda_2,\cdots,\lambda_n)$ be
the eigenvalues of the Hessian matrix $D^2u$. If $\lambda\in \o{\Gamma_k}(\Gamma_k)$, then we call $u$ is $k$-convex (uniformly $k$-convex). If $k=n$, a uniformly $n$-convex function is a convex function. Eq. \eqref{hq1} is elliptic for uniformly $k-$convex functions.

For $m=1,2,\dots,n$, let
$$\Phi_m=\{u\in C^1(\mathbb{R}^n\b B_1)\cap C^2(\mathbb{R}^n\b \o{B_1})|u\ \mbox{is uniformly}\ m\mbox{-convex and radially symmetric.}\}$$

 Wang and Bao \cite{WB} studied the necessary and sufficient conditions on existence and convexity of radial solutions to the exterior Dirichlet problems of Hessian equations
 \begin{equation}\label{hh1}
 \left\{
 \begin{array}{rll}
 S_k(D^2u)&=&1,\ x\in \mathbb{R}^n\b\o{B_1},\\
u&=&\bar{b},\ x\in \partial B_1,\\
u&=&\dfrac{\bar{a}}{2}|x|^2+\bar{c}+O(|x|^{2-n}),\ |x|\to\infty
 \end{array}
 \right.
 \end{equation}
 with $\bar{b},\bar{c}\in \mathbb{R}, \bar{a}=(1/C_n^k)^{\frac{1}{k}}$. Lately, Li and Lu \cite{LL} characterized the existence and nonexistence of solutions for exterior problem of Monge-Amp\`{e}re equations.

  As far as we know for Hessian quotient equations, there is no such beautiful result. In this paper, we first consider the necessary and sufficient conditions on existence of radial solutions to the exterior Dirichlet problems of Hessian quotient equations with prescribed asymptotic behavior at infinity.

  Let $B_1$ be the unit ball in $\mathbb{R}^n$. The main results of this paper are the following.

\begin{theorem}\label{thm-ns}
Let $2\leq k\leq m\leq n, n\geq 3,0\leq l<k$, and $b\in \mathbb{R}$. The exterior Dirichlet problem
\begin{align}
\label{1eq1}\dfrac{S_{k}(D^2u)}{S_{l}(D^2u)}=&1\
\ \mbox{in}\ \
\mathbb{R}^n\backslash\o{B_{1}},\\
\label{1eq2}u=&b\ \ \mbox{on}\ \ \partial B_1
\end{align}
has a unique function $u\in \Phi_m$ satisfying
$$u(x)=\dfrac{\hat{a}}{2}|x|^2+c+O(|x|^{2-n}),\ |x|\to\infty$$
if and only if $c\in[\mu(\alpha_1),+\infty)$ for $m=k$, and $c\in[\mu(\alpha_1),\mu(\alpha_2)]$ for $m>k$, where $\hat{a}=(C_n^l/C_n^k)^{\frac{1}{k-l}},$
$$c=\mu(\alpha)=b-\dfrac{\hat{a}}{2}+\hat{a}\dint_1^{\infty}s
\left[\left(1+\dfrac{\alpha}{C_n^ls^nU^l(s,\alpha)}\right)^{\frac{1}{k-l}}-1\right]ds,$$
\begin{equation}\label{alpha1}\alpha_1:=\sup_{r>1}U^{-1}_{r}\left(\left(\dfrac{lC_n^l}{kC_n^k}\right)^{\frac{1}{k-l}}\right)\end{equation}
\begin{equation}\label{alpha2}\alpha_2:=\inf_{r>1}U^{-1}_{r}\left(\left(\dfrac{(m-l)C_n^l}{(m-k)C_n^k}\right)^{\frac{1}{k-l}}\right),\end{equation}
and $U:=U(r,\alpha):=U_r(\alpha)$ satisfies
$$U^k-\frac{C_n^l}{C_n^k}U^l-\frac{\alpha}{C_n^kr^n}=0,\ r>1,$$
and \begin{equation}\label{Usup}U>\left(\dfrac{lC_n^l}{kC_n^k}\right)^{\frac{1}{k-l}}.\end{equation}
\end{theorem}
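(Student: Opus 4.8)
The plan is to pass to radial coordinates, reduce \eqref{1eq1} to a first integral, and then set up an order-preserving bijection between the admissible radial solutions and an interval of integration constants $\alpha$. First, for radial $u=u(r)$, $r=|x|$, the eigenvalues of $D^2u$ are $u''(r)$ (simple) and $u'(r)/r$ (multiplicity $n-1$), so for $j=l,k$ one has $S_j(D^2u)=\frac{C_n^j}{n\,r^{n-1}}\frac{d}{dr}\big((u')^j r^{n-j}\big)$. Hence \eqref{1eq1} is equivalent to $C_n^k\frac{d}{dr}\big((u')^k r^{n-k}\big)=C_n^l\frac{d}{dr}\big((u')^l r^{n-l}\big)$, which integrates to $C_n^k(u')^k r^{n-k}-C_n^l(u')^l r^{n-l}=\alpha$ for some constant $\alpha\in\mathbb{R}$; dividing by $C_n^k r^n$ and writing $U:=u'/r$ gives exactly the algebraic equation for $U=U(r,\alpha)$ in the statement. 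The solution is then recovered by $u(r)=b+\int_1^r sU(s,\alpha)\,ds$, which automatically satisfies \eqref{1eq2}, and conversely every radial solution of \eqref{1eq1}--\eqref{1eq2} in $C^2(\mathbb{R}^n\setminus\overline{B_1})$ arises this way for a unique $\alpha$. Thus the theorem reduces to identifying the set of $\alpha$ for which the resulting $u$ belongs to $\Phi_m$ and carries the prescribed asymptotics, and to controlling the resulting map $\alpha\mapsto c$.

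Next I will analyse the algebraic equation $f(U)=\alpha/(C_n^k r^n)$ with $f(t):=t^k-\frac{C_n^l}{C_n^k}t^l$. Since $0\le l<k$, on $(0,\infty)$ the function $f$ is strictly decreasing on $(0,U_\ast)$ and strictly increasing on $(U_\ast,\infty)$, where $U_\ast:=\big(lC_n^l/(kC_n^k)\big)^{1/(k-l)}$, with $f(U_\ast)<0$ and $f(\hat a)=0$ for $\hat a=(C_n^l/C_n^k)^{1/(k-l)}>U_\ast$. Imposing \eqref{Usup} picks out the increasing branch, which I will justify by showing that on the branch $0<U<U_\ast$ the inequality $\sigma_k(\lambda(D^2u))>0$ fails. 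On $\{U>U_\ast\}$ a solution exists for a given $r$ iff $\alpha/(C_n^k r^n)\ge f(U_\ast)$, and requiring this for every $r>1$ is equivalent to $\alpha\ge\alpha_1$, where $\alpha_1=C_n^k f(U_\ast)=\sup_{r>1}U_r^{-1}(U_\ast)$ as in \eqref{alpha1}; moreover, by the implicit function theorem $U(\cdot,\alpha)\in C^\infty(1,\infty)$ with $\partial_\alpha U=1/(C_n^k r^n f'(U))>0$ and $\partial_r U=-n\alpha/(C_n^k r^{n+1}f'(U))$, so $U(r,\alpha)$ is monotone in $r$ with $U(r,\alpha)\to\hat a$ as $r\to\infty$.

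I will then translate uniform $m$-convexity into a two-sided bound on $U$. For a radial $u\in\Phi_m$ one has $u'/r>0$ (from $\sigma_2>0$), and differentiating the algebraic relation gives $u''=U-n f(U)/f'(U)$, so the eigenvalues of $D^2u$ are $u''$ and $U$ (multiplicity $n-1$) with $U>0$; for such a configuration $\sigma_j(\lambda)=U^{j-1}\big(C_{n-1}^{j-1}u''+C_{n-1}^j U\big)$, and since the threshold $-\tfrac{n-j}{j}U$ for $\sigma_j>0$ increases in $j$, one has $\lambda\in\Gamma_m$ iff $\sigma_m>0$, i.e. iff $U f'(U)-m f(U)>0$. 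A short computation reduces the latter to $(k-m)U^{k-l}+(m-l)\tfrac{C_n^l}{C_n^k}>0$, which is automatic when $m=k$ and is exactly $U<U_{\ast\ast}:=\big((m-l)C_n^l/((m-k)C_n^k)\big)^{1/(k-l)}$ when $m>k$ (note $U_{\ast\ast}>\hat a$). Combining with the monotonicity of $U$ in $r$, the requirement $U(r,\alpha)<U_{\ast\ast}$ for all $r>1$ holds iff $\alpha\le\alpha_2=C_n^k f(U_{\ast\ast})=\inf_{r>1}U_r^{-1}(U_{\ast\ast})$ as in \eqref{alpha2}, with $\alpha_2=+\infty$ when $m=k$. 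Hence the admissible set of parameters is exactly $[\alpha_1,\alpha_2]$ for $m>k$ and $[\alpha_1,+\infty)$ for $m=k$; one checks $\alpha_1<0<\alpha_2$ (so it is nonempty) and that even at the endpoint $\alpha=\alpha_1$, where $f'(U_\ast)=0$ and $u''$ blows up as $r\to1^+$, the function $u$ still lies in $C^1(\mathbb{R}^n\setminus B_1)\cap C^2(\mathbb{R}^n\setminus\overline{B_1})$ and is uniformly $m$-convex on $\mathbb{R}^n\setminus\overline{B_1}$.

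Finally I will compute the asymptotic constant and conclude. From the recovery formula and $\tfrac{\hat a}{2}r^2-\tfrac{\hat a}{2}=\int_1^r\hat a s\,ds$ we obtain $u(r)-\tfrac{\hat a}{2}r^2=b-\tfrac{\hat a}{2}+\int_1^r s(U(s,\alpha)-\hat a)\,ds$; linearising $f$ at $\hat a$ gives $U(s,\alpha)-\hat a=O(s^{-n})$, so for $n\ge3$ the integral converges at infinity, $u(r)-\tfrac{\hat a}{2}r^2\to c$ with remainder $O(r^{2-n})$, and $c=b-\tfrac{\hat a}{2}+\int_1^\infty s(U(s,\alpha)-\hat a)\,ds$; using $U=\hat a\big(1+\alpha/(C_n^l s^n U^l)\big)^{1/(k-l)}$ this is exactly $\mu(\alpha)$. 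Since $\partial_\alpha U>0$ and the $O(s^{-n})$ bound is locally uniform in $\alpha$, differentiation under the integral sign shows $\mu$ is continuous and strictly increasing on the admissible interval, and for $m=k$ the estimate $U(s,\alpha)\sim(\alpha/(C_n^k s^n))^{1/k}$ together with $k<n$ gives $\mu(\alpha)\to+\infty$ as $\alpha\to+\infty$. Therefore $\mu$ is a homeomorphism from the admissible parameter set onto $[\mu(\alpha_1),\mu(\alpha_2)]$ (resp. $[\mu(\alpha_1),+\infty)$), and because $\alpha$ determines $u$ uniquely, the exterior problem has a solution in $\Phi_m$ with the prescribed asymptotics precisely when $c$ belongs to this interval, and the solution is then unique. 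The main obstacle is the content of the third paragraph: correctly selecting the branch of the algebraic equation, proving the monotonicity and regularity of $U(r,\alpha)$, and --- the most delicate point --- converting ``$\lambda(D^2u)\in\Gamma_m$ for all $r>1$'' into the sharp two-sided bound $U_\ast<U<U_{\ast\ast}$ and hence into $\alpha\in[\alpha_1,\alpha_2]$, endpoints included.
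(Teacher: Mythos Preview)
Your proposal is correct and follows essentially the same route as the paper: reduce to the radial first integral, select the branch $U>U_\ast$ by checking $\sigma_k>0$, convert $\lambda\in\Gamma_m$ into the two-sided bound $U_\ast<U<U_{\ast\ast}$ (the paper does this as a separate algebraic Lemma~\ref{lemma1}), pass to the $\alpha$-interval via monotonicity of $U_r$, and then read off the asymptotic constant $c=\mu(\alpha)$ and its monotonicity. The only cosmetic differences are that you compute the endpoints explicitly as $\alpha_1=C_n^k f(U_\ast)$, $\alpha_2=C_n^k f(U_{\ast\ast})$ whereas the paper leaves them as $\sup/\inf$, and that you prove $\mu'>0$ directly from $\partial_\alpha U>0$ while the paper differentiates the auxiliary quantity $V=\alpha/U^l$; your sketch of $\mu(\alpha)\to+\infty$ via $U\sim(\alpha/(C_n^k s^n))^{1/k}$ is a bit loose, but the conclusion follows immediately from $\partial_\alpha U>0$ and monotone convergence, which you have already set up.
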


\begin{remark}\label{nsrem1}
If $l=0$, for the case of Hessian equation $S_k(D^2u)=1$, we then get that $\alpha_1=-1,\alpha_2=\frac{k}{m-k}.$ So Theorem \ref{thm-ns} corresponds to Theorem 2 in \cite{WB}.
\end{remark}

\begin{remark}
For the Hessian equations, the eigenvalue $\gamma$ of Hessian matrix $D^2u$ of the solutions $u$ satisfies $\gamma^k=\frac{1+\alpha r^{-n}}{C_n^k}$ with $\alpha=C_n^k(u^{\prime}(1))^k-1$, so $\alpha$ is relatively easy to be estimated. Unlike the Hessian equations, in this paper, the eigenvalue $\gamma$ satisfies $\gamma^k-\frac{C_n^l}{C_n^k}\gamma^l-\frac{\alpha}{C_n^kr^n}=0$ with $\alpha=C_n^k(u^{\prime}(1))^k-C_n^l(u^{\prime}(1))^l$, it is difficult to estimate $\alpha$, here we use the inverse function, see Lemma \ref{lemma2}.
\end{remark}

Let $k=n=2, l=1$, then the exterior Dirichlet problem \eqref{1eq1}, \eqref{1eq2} becomes
\begin{eqnarray}
\label{sl2eq1}\det D^2u&=&\Delta u\
\ \mbox{in}\ \
\mathbb{R}^2\backslash\o{B_{1}},\\
\label{sl2eq2}u&=&b\ \ \mbox{on}\ \ \partial B_1.
\end{eqnarray}
\begin{theorem}\label{hqn2}
The exterior Dirichlet problem \eqref{sl2eq1}, \eqref{sl2eq2} has a unique solution $u\in \Phi_2$ satisfying
\begin{equation}\label{u22}u(x)=|x|^2+\dfrac{\rho}{2}\ln |x|+c+O(|x|^{-2})\end{equation}
if and only if $\rho\geq -1$, where $c=\nu(\rho)$ and
$$\nu(\rho)=b-\dfrac{1}{2}+\dfrac{\rho}{4}+\dfrac{\rho}{2}\ln 2-\dfrac{1}{2}[\sqrt{1+\rho}+\rho \ln(1+\sqrt{1+\rho})].$$
\end{theorem}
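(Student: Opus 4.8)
The plan is to reduce \eqref{sl2eq1} for a radially symmetric $u=u(r)$, $r=|x|$, to an ODE that can be integrated explicitly, using that the eigenvalues of $D^2u$ are $u''(r)$ and $u'(r)/r$. In these terms \eqref{sl2eq1} reads $u''\cdot\frac{u'}{r}=u''+\frac{u'}{r}$ on $r>1$; multiplying by $r$ puts this into the divergence form $\frac12\big((u')^2\big)'=(r u')'$, and integrating over $[1,r]$ gives the first integral
$$\big(u'(r)\big)^2-2r\,u'(r)=\alpha,\qquad \alpha:=\big(u'(1)\big)^2-2u'(1),$$
which is the $n=k=2$, $l=1$ specialization of the quantity behind Theorem \ref{thm-ns}. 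Solving this quadratic in $u'$ yields $u'(r)=r\pm\sqrt{r^2+\alpha}$.

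I would then fix the branch and the admissible range of $\alpha$. Since $u\in\Phi_2$ means $\lambda(D^2u)\in\Gamma_2$, we have $u''+u'/r>0$ and $u''\cdot u'/r>0$ on $(1,\infty)$, hence $u'>0$ and $u''>0$ there; this is met only by the $+$ branch $u'(r)=r+\sqrt{r^2+\alpha}$, since on the $-$ branch, wherever it is real and positive, one computes $u''<0$. For $u'$ to be real and smooth on all of $(1,\infty)$ one needs $r^2+\alpha>0$ for every $r>1$, i.e. $1+\alpha\ge0$; combined with the identification $\alpha=\rho$ obtained below, this is the necessity of $\rho\ge-1$. For the converse, given $\alpha\ge-1$ I would set
$$u(r):=b+\int_1^r\Big(s+\sqrt{s^2+\alpha}\Big)\,ds$$
and check that $u\in\Phi_2$: it is radial, it is $C^1$ up to $\partial B_1$ (the integrand is continuous at $s=1$ because $1+\alpha\ge0$) and $C^\infty$ on $r>1$ (there $r^2+\alpha\ge r^2-1>0$), and $u'=r+\sqrt{r^2+\alpha}>0$, $u''=1+r/\sqrt{r^2+\alpha}>1$, so $\lambda(D^2u)\in\Gamma_2$; by construction it solves \eqref{sl2eq1}–\eqref{sl2eq2}. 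The threshold case $\alpha=-1$ needs only the remark that $u''\to\infty$ as $r\to1^+$ is harmless, since $C^2$-regularity is required only on the open exterior.

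Finally I would extract the asymptotics from the explicit antiderivative $\int\big(s+\sqrt{s^2+\alpha}\big)\,ds=\frac{s^2}{2}+\frac{s}{2}\sqrt{s^2+\alpha}+\frac{\alpha}{2}\ln\big(s+\sqrt{s^2+\alpha}\big)$; evaluating between $1$ and $r$ and using $\frac r2\sqrt{r^2+\alpha}=\frac{r^2}{2}+\frac{\alpha}{4}+O(r^{-2})$ together with $\frac{\alpha}{2}\ln\big(r+\sqrt{r^2+\alpha}\big)=\frac{\alpha}{2}\ln r+\frac{\alpha}{2}\ln 2+O(r^{-2})$ gives
$$u(r)=r^2+\frac{\alpha}{2}\ln r+\Big(b-\tfrac12+\tfrac{\alpha}{4}+\tfrac{\alpha}{2}\ln 2-\tfrac12\big[\sqrt{1+\alpha}+\alpha\ln(1+\sqrt{1+\alpha})\big]\Big)+O(r^{-2}).$$
Matching against \eqref{u22} forces $\rho=\alpha$ and identifies the constant term as $\nu(\rho)$; in particular \eqref{u22} holds for exactly one value of $\rho$, and its constant is then determined. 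Uniqueness follows at once: any $u\in\Phi_2$ satisfying \eqref{u22} obeys the first integral with its own constant, which the expansion forces to equal $\rho$, after which $u$ is pinned down by $u'(r)=r+\sqrt{r^2+\rho}$ and $u(1)=b$. Since the case $n=2$, $k=2$, $l=1$ is explicitly solvable, there is no genuine analytic obstacle; the only delicate points are (i) selecting the right root and verifying the regularity and $\Gamma_2$-membership of the candidate, especially at $\alpha=-1$, and (ii) carrying the logarithmic term — which is particular to dimension two and is precisely why this case falls outside the power-type asymptotics of Theorem \ref{thm-ns} — correctly through the integration and the $O(|x|^{-2})$ expansion.
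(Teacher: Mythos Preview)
Your proposal is correct and follows essentially the same route as the paper: reduce the radial equation to $((u')^2)'=2(ru')'$, integrate to the first integral $(u')^2-2ru'=\rho$, pick the branch $u'=r+\sqrt{r^2+\rho}$, deduce $\rho\ge-1$, integrate explicitly, and read off the asymptotics and the constant $\nu(\rho)$. The only cosmetic difference is the branch selection --- the paper invokes Lemma~\ref{lemma1} (the $k=m=n=2$, $l=1$ case gives $u'/r>1$) to discard the minus root, whereas you argue directly from the $\Gamma_2$ sign conditions; both are equivalent and equally short.
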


Next we study the existence of solutions of problem \eqref{hq1} and \eqref{hq2}. Now we first recall the definition of viscosity solution.
\begin{definition}
\par A function $u\in C^0(\mathbb{R}^{n}\setminus\o{\Omega})$ is said to be a viscosity subsolution of \eqref{hq1}, if for any $\bar{x}\in\mathbb{R}^{n}\setminus\o{\Omega}$, $ v\in C^{2}(\mathbb{R}^{n}\setminus\o{\Omega})$ satisfying
$$u(x)\leq v(x),~x\in\mathbb{R}^{n}\setminus\o{\Omega}\quad\mbox{and}\quad u(\bar{x})= v(\bar{x}),$$we have$$S_{k}(D^{2} v(\bar{x}))\geq g(\bar{x}).$$
\par Similarly, $u\in C^0(\mathbb{R}^{n}\setminus\o{\Omega})$ is said to be a viscosity supersolution of \eqref{hq1}, if for every $\bar{x}\in\mathbb{R}^{n}\setminus\o{\Omega}$, $k-$convex function $ v\in C^{2}(\mathbb{R}^{n}\setminus\o{\Omega})$ satisfying
$$u(x)\geq v(x),~x\in\mathbb{R}^{n}\setminus\o{\Omega}\quad\mbox{and}\quad u(\bar{x})= v(\bar{x}),$$we have $$S_{k}(D^{2} v(\bar{x}))\leq g(\bar{x}).$$
\par If $u\in C^{0}(\mathbb{R}^{n}\setminus\o{\Omega})$ is both a viscosity subsolution and a viscosity supersolution of \eqref{hq1}, we call that $u$ is a viscosity solution of \eqref{hq1}.

 If $u$ is a viscosity subsolution (resp. supersolution, solution) to \eqref{hq1} and $u\leq$(resp. $\geq,=)\phi(x)$ on $\partial\Omega$, we call that $u$ is a viscosity subsolution (resp. supersolution, solution) of \eqref{hq1} and \eqref{hq2}.
\end{definition}

Let $S(n)$ denote the set of the real $n\times n$ symmetric matrices, and for $A\in S(n)$, $A$ is positive definite, let $r:=r_A(x):=\sqrt{x^{T}Ax}$. Suppose that $g_0\in C^{0}([0,+\infty))$ is a positive function of $r$,
$$0<\inf_{[0,+\infty)}g_0\leq \sup_{[0,+\infty)}g_0<+\infty,$$
and for some constant $\u{C}_0>0$,
$$\lim_{r\to+\infty}g_0(r)=\u{C}_0>0.$$
Assume that $g\in C^0(\mathbb{R}^n)$ satisfies that
\begin{equation}\label{gbdd}0<\inf_{\mathbb{R}^n}g\leq \sup_{\mathbb{R}^n}g<+\infty,\end{equation}
and there exists a constant $\beta>2$ such that
\begin{equation}\label{4}
g(x)=g_0(r)+O(r^{-\beta}),\ |x|\to\infty.
\end{equation}
Define
$$\Lambda_k(A,x)=\dfrac{\sum_{i=1}^n\frac{\partial}{\partial\lambda_i}
\sigma_{k}(\lambda(A))\lambda_i^2x_i^2}{\sigma_k(\lambda(A))\sum_{i=1}^n\lambda_ix_i^2},$$ and
	\begin{equation*}
\o{t}_{k}:=\o{t}_{k}(A):=\sup\limits_{x\in\mathbb{R}^n\b\{0\}}\Lambda_k(A,x),
	\end{equation*}
$$\u{t}_{k}:=\u{t}_{k}(A):=\inf\limits_{x\in\mathbb{R}^n\b\{0\}}\Lambda_k(A,x),$$
where $\lambda_i,i=1,\dots,n$ are the eigenvalues of the matrix $A$.

\begin{remark}
If $A=\bar{C}I$ for some $\bar{C}>0$, then $\o{t}_k=\u{t}_k=\Lambda_k=k/n$. See also Lemma \ref{lem5}.
\end{remark}
Let
$$\mathcal{A}_{k,l}=\left\{A|A\in S(n), A  \mbox{\ is positive definite and satisfy}\ \frac{S_k(A)}{S_l(A)}=1\right\}.$$

\begin{theorem}\label{thm1} Let $0\leq l<k\leq n, n\geq3,$ $\frac{k-l}{\o{t}_k-\u{t}_l}>2$ and $\Omega$ be a bounded, strictly convex subset in $\mathbb{R}^{n},\phi\in C^{2}(\partial\Omega)$. Assume that $g$ satisfies \eqref{gbdd} and \eqref{4}.
Then for any given $b\in\mathbb{R}^{n}$ and $A\in \mathcal{A}_{k,l}$, there exists some constant $\tilde{c}$, depending only on $n,~b,~A,~\Omega,~g,~g_0,$ $||\phi||_{C^{2}(\partial\Omega)}$, such that for every $c>\tilde{c}$, there exists a unique viscosity solution $u\in C^{0}(\mathbb{R}^{n}\backslash\Omega)$ of \eqref{hq1} and \eqref{hq2} satisfying
\begin{align}\label{asymptotic1}
&\limsup\limits_{|x|\rightarrow\infty}\left(
|x|^{\min\{\beta,\frac{k-l}{\o{t}_k-\u{t}_l}\}-2}\left|u(x)-\left(u_0(x)+b\cdot x+c\right)\right|\right)<\infty,\ \mbox{if}\ \beta\not=\frac{k-l}{\o{t}_k-\u{t}_l},
\end{align}
or
\begin{align}\label{asymptotic2}
&\limsup\limits_{|x|\rightarrow\infty}\left(
|x|^{\min\{\beta,\frac{k-l}{\o{t}_k-\u{t}_l}\}-2}(\ln|x|)^{-1}\left|u(x)-\left(u_0(x)+b\cdot x+c\right)\right|\right)<\infty,\ \mbox{if}\ \beta=\frac{k-l}{\o{t}_k-\u{t}_l},
\end{align}
where
$$u_0(x)=\int_{0}^{r_A(x)}\theta h_0(\theta)d\theta,$$
and $h_0$ satisfies
\begin{equation}\label{Q5}
h_0(r)=\left(r^{-\frac{k-l}{\o{t}_k-\u{t}_l}}
\dint_0^r g_0(s)(s^{\frac{k-l}{\o{t}_k-\u{t}_l}}h_0(s)^{\frac{(k-l)\u{t}_l}{\o{t}_k-\u{t}_l}})^{\prime}ds\right)^{\frac{\o{t}_k-\u{t}_l}{(k-l)\o{t}_k}}.\end{equation}
\end{theorem}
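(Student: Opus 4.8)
The plan is to follow the Perron method framework pioneered by Caffarelli--Li for the exterior Monge--Amp\`ere problem, adapted to the Hessian quotient operator. The proof splits naturally into three pieces: constructing a global radial subsolution with the prescribed asymptotics from the ODE \eqref{Q5}, constructing local barriers near $\partial\Omega$ to handle the boundary data $\phi$, and then gluing them via Perron's method together with a comparison principle to get the solution and its uniqueness.

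\medskip
\noindent\textbf{Step 1: the radial model solution.}
First I would analyze the ODE \eqref{Q5}. Since $g_0$ is bounded between positive constants and tends to $\u C_0$, a fixed-point/monotone-iteration argument on the integral operator in \eqref{Q5} produces a positive $h_0\in C^0([0,\infty))$, and the choice of the exponent $\frac{k-l}{\o t_k-\u t_l}$ is exactly what makes the Euler-type scaling match. The function $u_0(x)=\int_0^{r_A(x)}\theta h_0(\theta)\,d\theta$ is then, by the definitions of $\o t_k,\u t_l$ and $\Lambda_k$, a (viscosity) subsolution of $S_k(D^2 u)/S_l(D^2u)=g_0(r_A)$: the inequalities $\Lambda_k\le \o t_k$ and $\Lambda_l\ge \u t_l$ convert the radial ODE into the differential inequality for the quotient. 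Using \eqref{4}, i.e. $g=g_0+O(r^{-\beta})$ with $\beta>2$, one perturbs $u_0$ by adding a term of the form $\frac{d}{2}|x|^2$ or a lower-order radial correction to absorb the error and produce an honest subsolution $\u u$ of \eqref{hq1} in $\mathbb R^n\setminus\o\Omega$ for large enough additive constant; the condition $\frac{k-l}{\o t_k-\u t_l}>2$ guarantees the correction decays and yields precisely the decay rates in \eqref{asymptotic1}--\eqref{asymptotic2} (with the logarithmic borderline case when $\beta=\frac{k-l}{\o t_k-\u t_l}$). A matching radial supersolution with the same leading behavior $u_0(x)+b\cdot x+c$ is built the same way, shifting the correction in the opposite direction; this is where the threshold $\tilde c$ enters, since one needs the supersolution to dominate $\phi$ on $\partial\Omega$ and lie above $\u u$ everywhere, which forces $c$ large.

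\medskip
\noindent\textbf{Step 2: boundary barriers and Perron.}
Near $\partial\Omega$, strict convexity of $\Omega$ lets me build, for each boundary point, a smooth local supersolution agreeing with $\phi$ there and lying above $\u u$ (a standard construction using a large multiple of a defining paraboloid of an exterior tangent ball, together with the affine function $b\cdot x+c$). Taking $\u u$ itself, suitably lifted near $\partial\Omega$ by these local barriers, one gets a subsolution $w$ of the full problem \eqref{hq1}--\eqref{hq2} with $w=\phi$ on $\partial\Omega$ and $w(x)=u_0(x)+b\cdot x+c+O(\cdots)$. Then define
$$
u(x)=\sup\{\,v(x): v\ \text{is a viscosity subsolution of \eqref{hq1}},\ v\le \o u\ \text{in}\ \mathbb R^n\setminus\o\Omega,\ v\le\phi\ \text{on}\ \partial\Omega\,\},
$$
where $\o u$ is the radial supersolution from Step 1. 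The usual Perron arguments—closedness of subsolutions under sup and under the interior "bump" modification—show $u$ is a viscosity solution in $\mathbb R^n\setminus\o\Omega$; squeezing $w\le u\le\o u$ pins down the boundary values and the asymptotics \eqref{asymptotic1}/\eqref{asymptotic2}. Interior regularity (Evans--Krylov type, since the equation is uniformly elliptic on the relevant cone once we are $k$-convex) upgrades $u$ to be smooth in the interior, though only continuity up to $\partial\Omega$ is claimed.

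\medskip
\noindent\textbf{Step 3: uniqueness.}
Uniqueness follows from a comparison principle for \eqref{hq1} among functions sharing the asymptotic expansion: if $u_1,u_2$ are two solutions with the same $u_0(x)+b\cdot x+c$ leading term, then $u_1-u_2\to 0$ at infinity, so for any $\varepsilon>0$ the function $u_2+\varepsilon$ dominates $u_1$ outside a large ball, and on $\partial\Omega$ they agree; the comparison principle on the annular region then gives $u_1\le u_2+\varepsilon$, and letting $\varepsilon\to0$ and symmetrizing yields $u_1=u_2$.

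\medskip
\noindent The main obstacle I expect is Step 1: unlike the pure Hessian case, the quotient $S_k/S_l$ is not homogeneous, so the radial reduction does not give a clean algebraic formula for the eigenvalues (cf. the cubic-type relation $\gamma^k-\frac{C_n^l}{C_n^k}\gamma^l-\frac{\alpha}{C_n^kr^n}=0$ appearing in Theorem \ref{thm-ns}), and one must work with the implicitly defined profile $h_0$ from \eqref{Q5} and carefully track how the error term $O(r^{-\beta})$ propagates through the nonlinear operator to land on the stated decay rate—in particular verifying that the constants $\o t_k,\u t_l$ genuinely control the operator along radial graphs (which is where the sharp definitions of $\Lambda_k$, $\o t_k$, $\u t_l$ do the heavy lifting). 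Ensuring the subsolution and supersolution remain uniformly $k$-convex (so that the equation stays elliptic and the comparison principle applies) throughout these perturbations is the other delicate point.
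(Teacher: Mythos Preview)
Your overall architecture (Perron's method, radial sub/supersolutions with the prescribed asymptotics, boundary barriers from strict convexity, uniqueness via comparison at infinity) matches the paper. However, two concrete points diverge from the paper's argument and deserve attention.

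\textbf{Construction of sub/supersolutions.} You propose to take $u_0$ (built from $h_0$ and $g_0$) and then \emph{perturb} it by a lower-order correction to absorb the $O(r^{-\beta})$ error in $g$. The paper does not do this. Instead it introduces radial envelopes $\underline g(r)\le g(x)\le\overline g(r)$ with $\overline g=g_0+C_1r^{-\beta}$, $\underline g=g_0-C_1r^{-\beta}$ for large $r$, and solves two \emph{separate} first-order ODEs: one with $\overline g$ and initial value $\delta>\sup\overline g^{1/(k-l)}$ giving a profile $h$ with $h'\le 0$ (hence a $k$-convex subsolution $W_\delta$), and one with $\underline g$ and initial value $\tau$ with $(\underline t_l/\overline t_k)\underline g(1)<\tau^{k-l}<\underline g(1)$ giving a profile $H$ with $H'\ge 0$ (hence a supersolution). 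The asymptotics \eqref{asymptotic1}--\eqref{asymptotic2} then come from estimating $\int_r^\infty\theta[h(\theta)-h_0(\theta)]\,d\theta$ and $\int_r^\infty\theta[H(\theta)-h_0(\theta)]\,d\theta$ directly. The paper singles out the supersolution as the main new difficulty, precisely because (unlike the case $g\equiv1$) one cannot simply take $\frac12 x^TAx+b\cdot x+c$. Your ``shift the correction in the opposite direction'' is too vague here; the sign of $H'$ is what flips the $\Lambda_k\le\overline t_k$, $\Lambda_l\ge\underline t_l$ inequalities in the right direction to produce a genuine supersolution.

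\textbf{Boundary behavior.} You have the barriers backwards. The paper builds, for each $\xi\in\partial\Omega$, a quadratic $\rho_\xi$ with $\rho_\xi(\xi)=\phi(\xi)$, $\rho_\xi<\phi$ on $\partial\Omega\setminus\{\xi\}$, and $S_k(D^2\rho_\xi)/S_l(D^2\rho_\xi)\ge g$; these are \emph{sub}solutions, and $\varphi=\max_\xi\rho_\xi$ is glued with $W_{\delta(c)}$ to form $\underline u$. This gives $\liminf_{x\to\xi}u(x)\ge\phi(\xi)$. For the upper bound $\limsup_{x\to\xi}u(x)\le\phi(\xi)$, the squeeze $u\le\overline u$ is not enough since $\overline u$ does not equal $\phi$ on $\partial\Omega$; the paper instead uses the harmonic lift $\vartheta$ on an annulus $E_{R_0}\setminus\overline\Omega$ (every element of the Perron class is subharmonic by the Newton inequalities, hence $\le\vartheta$). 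Your Step~3 on uniqueness is fine and is essentially what the paper uses.
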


The main difficulty of proving Theorem \ref{thm1} is not only to construct the generalized symmetric subsolutions with asymptotic behavior, but also to construct the generalized symmetric supersolutions with asymptotic behavior which is different from the case $g\equiv 1$ in \cite{LLX1} where $\frac{1}{2}x^TAx+b\cdot x+c$ are directly the supersolutions. In addition, we use different method from which in \cite{LLX1} to obtain the asymptotic behavior of subsolutions and supersolutions on the basis of the expressions of subsolutions and supersolutions themselves.

\begin{remark}\label{rem2}
If $g_0\equiv1$, then $g=1+O(r^{-\beta}),r\to+\infty$ and by \eqref{Q5}, $h_0\equiv 1$, and so
\begin{align*}\label{asymptotic3}
u_0(x)=\frac{1}{2}x^{T}Ax.
\end{align*}
\end{remark}

\begin{remark}
For the special case that $l=0,k\geq 2$, the Hessian equation $S_k(D^2u)=g(x)$ with $g(x)=g_0(r)+O(r^{-\beta})$, our Theorem \ref{thm1} is Theorem 1.3 in \cite{d2}. For the special case that $l=0, g_0\equiv 1$, the Hessian equation $S_k(D^2u)=g(x)$ with $g(x)=1+O(|x|^{-\beta})$, our Theorem \ref{thm1} extends the corresponding Theorem 1.1 in \cite{CB}, where the solution $u$ satisfies the asymptotic behavior
\begin{align*}
&\limsup\limits_{|x|\rightarrow\infty}\left(
|x|^{\min\{\beta,\frac{k}{H_k}\}-2}\left|u(x)-\left(\frac{1}{2}x^{T}Ax+b\cdot x+c\right)\right|\right)<\infty,\ \mbox{if}\ \beta\not=\frac{k}{H_k},
\end{align*}
or
\begin{align*}
&\limsup\limits_{|x|\rightarrow\infty}\left(
|x|^{\min\{\beta,\frac{k}{H_k}\}-2}(\ln|x|)^{-1}\left|u(x)-\left(\frac{1}{2}x^{T}Ax+b\cdot x+c\right)\right|\right)<\infty,\ \mbox{if}\ \beta=\frac{k}{H_k},
\end{align*}
with $k\geq 2, H_k=H_k(A):=\displaystyle\max_{1\leq i\leq n}A_k^i(A)$, $A_k^i(A)=\lambda_i(A)\frac{\partial}{\partial \lambda_i}\sigma_k(\lambda(A)),i=1,\dots,n$. But in our paper, if $l=0$, then $\u{t}_l=0,$ and our $\o{t}_k$ is different from $H_k$. For $A\in \mathcal {A}_{k,0}$, we can know that $\o{t}_k\leq H_k$ which means that the result of Theorem \ref{thm1} is better than that of Theorem 1.1 in \cite{CB}.

For the special case that $l=0,k=n,g_0\equiv 1$, the Monge-Amp\`{e}re equation $\det D^2u=g(x)$ with $g(x)=1+O(|x|^{-\beta})$, our result \eqref{asymptotic1} is included in the Corollary 1.1 in \cite{BLZ1} but where \eqref{asymptotic2} is missed.
\end{remark}

\begin{remark}
In Theorem \ref{thm1}, since $\frac{k-l}{\o{t}_k-\u{t}_l}>2$, $k-l\geq 1$ and $0<\o{t}_k-\u{t}_l<1,$ then the cases that $k-l=1,\o{t}_k-\u{t}_l\geq\frac{1}{2}$ are not included. For the two cases, the supersolutions with asymptotic behavior are not easy to seek. Maybe we need to look for new methods.
\end{remark}

%\begin{remark}\label{rem2}
%If $g_0=O(r^{\alpha})$ with $\alpha>\frac{k(2-{min\{\beta,\frac{k}{\theta_{k}}\}})}{k-1}$ %as $|x|\to\infty$, then the asymptotic behavior \eqref{asymptotic} becomes
%\begin{align}\label{asymptotic4}
%&\limsup\limits_{|x|\rightarrow\infty}\left(|x|^{min\{\beta,\frac{k}{\theta_{k}}\}+\alpha-\frac{\alpha}{k}-2}\left|u(x)-\left(\frac{1}{2}x^TAx+bx+c\right)\right|\right)<\infty.
%\end{align}
%\end{remark}

The remainders of this paper are arranged as follows. In section 2, we will prove Theorems \ref{thm-ns} and \ref{hqn2}. With the help of the inverse function, we estimate $\alpha=C_n^k(u^{\prime}(1))^k-C_n^l(u^{\prime}(1))^l$, see Lemma \ref{lemma2}.
In Section 3, we first prove Proposition \ref{prop1} which indicates that there does not exist generalized symmetric solution (see Definition \ref{Definition}) for the Hessian quotient equation \eqref{hq1} with $0\leq l<k\leq n$ unless $A=\hat{a}I$. Then we construct a family of generalized symmetric smooth $k-$convex subsolutions and supersolutions of \eqref{hq1} by the solutions of ODE and in the fourth section, we will prove Theorem \ref{thm1} by Perron's method. Finally, in Appendix, we give several lemmas which are used in this paper.

\section{Proof of Theorem \ref{thm-ns}}

\begin{lemma}\label{lem-w}
Let $\lambda=(\omega,\gamma,\dots,\gamma)\in \Gamma_m, n\geq m\geq 2.$ Then $\gamma>0.$
\end{lemma}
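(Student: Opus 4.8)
The plan is to use the defining inequalities of the cone $\Gamma_m$ for the special eigenvalue vector $\lambda=(\omega,\gamma,\dots,\gamma)$ and extract positivity of $\gamma$ from just two of them, namely $\sigma_1(\lambda)>0$ and $\sigma_2(\lambda)>0$ (this is why the hypothesis $m\ge 2$ is needed). First I would write down explicitly
\[
\sigma_1(\lambda)=\omega+(n-1)\gamma,\qquad
\sigma_2(\lambda)=(n-1)\omega\gamma+\binom{n-1}{2}\gamma^2,
\]
since among the $n$ entries exactly one equals $\omega$ and the remaining $n-1$ equal $\gamma$. From $\sigma_2(\lambda)>0$ we get $\gamma\big[(n-1)\omega+\binom{n-1}{2}\gamma\big]>0$, i.e. $\gamma\cdot(n-1)\big[\omega+\tfrac{n-2}{2}\gamma\big]>0$, so in particular $\gamma\neq 0$ and $\gamma$ has the same sign as $\omega+\tfrac{n-2}{2}\gamma$.

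The key step is then to rule out $\gamma<0$. Suppose for contradiction that $\gamma<0$. Then from the sign relation just derived, $\omega+\tfrac{n-2}{2}\gamma<0$, hence $\omega<-\tfrac{n-2}{2}\gamma=\tfrac{n-2}{2}|\gamma|$. But I also want an upper bound on $\omega$ coming from $\sigma_1$: $\sigma_1(\lambda)=\omega+(n-1)\gamma>0$ forces $\omega>-(n-1)\gamma=(n-1)|\gamma|$. Combining, $(n-1)|\gamma|<\omega<\tfrac{n-2}{2}|\gamma|$, which gives $(n-1)|\gamma|<\tfrac{n-2}{2}|\gamma|$; since $|\gamma|>0$ this says $n-1<\tfrac{n-2}{2}$, i.e. $2n-2<n-2$, i.e. $n<0$, a contradiction. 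Therefore $\gamma>0$.

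I expect the only subtlety — hardly an obstacle — to be making sure the degenerate count of entries is handled correctly (one $\omega$, $n-1$ copies of $\gamma$, with $n\ge m\ge 2$ so $n-1\ge 1$ and the $\sigma_2$ expression is genuinely present and not vacuous), and bookkeeping the inequalities' directions when $\gamma<0$. No deeper machinery (Newton inequalities, Maclaurin, or properties of $\Gamma_k$ beyond the literal definition $\sigma_j(\lambda)>0$) is required; the two lowest-order symmetric functions already suffice.
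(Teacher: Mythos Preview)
Your proof is correct. The paper itself does not supply a proof of this lemma but merely cites Lemma~1 in \cite{WB}; your argument, which uses only the two inequalities $\sigma_1(\lambda)>0$ and $\sigma_2(\lambda)>0$ together with the explicit forms $\sigma_1=\omega+(n-1)\gamma$ and $\sigma_2=(n-1)\gamma\big[\omega+\tfrac{n-2}{2}\gamma\big]$, is a clean self-contained substitute.
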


\begin{proof}
See Lemma 1  in \cite{WB}.
\end{proof}

\begin{lemma}\label{lemma1}
Let $\lambda=(\omega,\gamma,\dots,\gamma)\in \mathbb{R}^{n}$ and $\sigma_{k,l}(\lambda)=1,$ $2\leq k\leq n, 0\leq l<k.$ Then $\lambda\in \Gamma_m (k\leq m\leq n)$ if and only if
\begin{equation}\label{1}
\left(\dfrac{lC_n^l}{kC_n^k}\right)^{\frac{1}{k-l}}<\gamma<\gamma_m,
\end{equation}
where
\begin{equation}\label{gammam}\gamma_m=\left\{
\begin{array}{rll}
\left(\dfrac{(m-l)C_n^l}{(m-k)C_n^k}\right)^{\frac{1}{k-l}},&m>k\\
+\infty,&m=k.
\end{array}
\right.
\end{equation}
\end{lemma}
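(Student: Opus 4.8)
The plan is to use the special structure $\lambda=(\omega,\gamma,\dots,\gamma)$ to turn the condition ``$\lambda\in\Gamma_m$'' into a single scalar inequality, and then to use the constraint $\sigma_{k,l}(\lambda)=1$ to rewrite that inequality purely in terms of $\gamma$. First I would record the elementary symmetric functions: expanding $(1+\omega t)(1+\gamma t)^{n-1}$ gives
$$\sigma_j(\lambda)=\binom{n-1}{j}\gamma^j+\binom{n-1}{j-1}\omega\gamma^{j-1}=\frac{C_n^j\,\gamma^{j-1}}{n}\big[(n-j)\gamma+j\omega\big],\qquad j=1,\dots,n.$$
Next, in either direction of the asserted equivalence one has $\gamma>0$: in the ``only if'' direction this is Lemma \ref{lem-w} (recall $m\ge 2$), and in the ``if'' direction it is immediate from the assumed lower bound on $\gamma$. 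Moreover $\gamma\ne\gamma_0:=\big(lC_n^l/(kC_n^k)\big)^{1/(k-l)}$, since substituting $\gamma^{k-l}=lC_n^l/(kC_n^k)$ into $\sigma_k(\lambda)=\sigma_l(\lambda)$ makes the $\omega$-terms cancel and forces $\gamma=0$. Under $\gamma>0$ the prefactor $C_n^j\gamma^{j-1}/n$ is positive, so $\sigma_j(\lambda)>0$ is equivalent to $f_j:=(n-j)\gamma+j\omega>0$; and since $j\mapsto f_j=n\gamma+j(\omega-\gamma)$ is affine with $f_0=n\gamma>0$, the numbers $f_1,\dots,f_m$ are all positive iff $f_m>0$. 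Hence
$$\lambda\in\Gamma_m\quad\Longleftrightarrow\quad(n-m)\gamma+m\omega>0.$$

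Now I would eliminate $\omega$. Writing $\sigma_k(\lambda)=\sigma_l(\lambda)$ as $C_n^k\gamma^{k-l}\big[(n-k)\gamma+k\omega\big]=C_n^l\big[(n-l)\gamma+l\omega\big]$ and solving,
$$\omega=\frac{\gamma\big[(n-l)C_n^l-(n-k)C_n^k\gamma^{k-l}\big]}{D},\qquad D:=kC_n^k\gamma^{k-l}-lC_n^l,$$
where $D>0$ exactly when $\gamma>\gamma_0$ and $D<0$ when $0<\gamma<\gamma_0$ (and $D\ne 0$, as just observed). Substituting this into $(n-m)\gamma+m\omega$, the bracketed combination collapses after cancellation to
$$(n-m)\gamma+m\omega=\frac{n\gamma}{D}\Big[(k-m)C_n^k\gamma^{k-l}+(m-l)C_n^l\Big].$$

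It remains to read off the sign. Since $n\gamma>0$, the left side agrees in sign with the bracket when $D>0$ and is opposite in sign when $D<0$. The bracket $(k-m)C_n^k\gamma^{k-l}+(m-l)C_n^l$ is positive precisely when $\gamma<\gamma_m$: for $m=k$ it equals $(k-l)C_n^l>0$ (consistent with $\gamma_m=+\infty$), and for $m>k$ it rearranges to $\gamma^{k-l}<(m-l)C_n^l/\big((m-k)C_n^k\big)=\gamma_m^{k-l}$. Using the elementary fact $\gamma_0<\gamma_m$ (which reduces to $l<k$), the case $0<\gamma<\gamma_0$ gives $D<0$ and bracket $>0$, hence $(n-m)\gamma+m\omega<0$ and $\lambda\notin\Gamma_m$; the case $\gamma>\gamma_0$ gives $D>0$, hence $(n-m)\gamma+m\omega>0$ iff $\gamma<\gamma_m$. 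Combining with the equivalence from the first paragraph yields $\lambda\in\Gamma_m\iff\gamma_0<\gamma<\gamma_m$, which is exactly \eqref{1}. I expect the only fiddly part to be the substitution producing the displayed factorization of $(n-m)\gamma+m\omega$, together with the sign bookkeeping around the threshold $\gamma_0$; there is no genuine conceptual obstacle.
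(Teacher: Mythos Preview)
Your proof is correct and follows the same overall strategy as the paper: compute $\sigma_j(\lambda)$ for $\lambda=(\omega,\gamma,\dots,\gamma)$, solve the constraint $\sigma_k=\sigma_l$ for $\omega$, substitute, and analyze the resulting sign condition on $\gamma$. The execution differs in two useful ways. First, you exploit the fact that $f_j=(n-j)\gamma+j\omega$ is affine in $j$ with $f_0=n\gamma>0$, so $\sigma_1,\dots,\sigma_m>0$ collapses to the single inequality $f_m>0$; the paper instead treats each $j$ separately and intersects the resulting ranges, which is correct but heavier. Second, you handle both directions of the equivalence and explicitly rule out $\gamma=\gamma_0$ (where the denominator $D$ vanishes), whereas the paper only writes out the forward implication and dismisses the case $D<0$ with the phrase ``Case 2 is impossible'' without saying why. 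Your clean factorization $(n-m)\gamma+m\omega=\tfrac{n\gamma}{D}\big[(k-m)C_n^k\gamma^{k-l}+(m-l)C_n^l\big]$ is exactly what underlies the paper's case analysis, just made visible.
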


\begin{proof}
Since $\sigma_{k,l}(\lambda)=1$, then
$$\dfrac{C_{n-1}^{k-1}\omega\gamma^{k-1}+C_{n-1}^k\gamma^k}{C_{n-1}^{l-1}\omega\gamma^{l-1}+C_{n-1}^l\gamma^l}=1.$$
So
\begin{equation}\label{eq3}\omega=\dfrac{(n-l)C_n^l\gamma^l-(n-k)C_n^k\gamma^k}{kC_n^k\gamma^{k-1}-lC_n^l\gamma^{l-1}}.\end{equation}
Due to $\lambda\in \Gamma_m$, by the definition of $\Gamma_m$, we get that $$\sigma_j(\lambda)=C_{n-1}^{j-1}\omega\gamma^{j-1}+C_{n-1}^j\gamma^j>0,j=1,2,\dots,m,$$
and then
$$\gamma^{j-1}(j\omega+(n-j)\gamma)>0.$$
By Lemma \ref{lem-w}, since $\gamma>0$, we have
$$j\omega+(n-j)\gamma>0.$$
From \eqref{eq3}, we know that
$$j\omega+(n-j)\gamma=\dfrac{n(C_n^l(j-l)\gamma^l-C_n^k(j-k)\gamma^k)}{kC_n^k\gamma^{k-1}-lC_n^l\gamma^{l-1}}>0.$$
Then we have two cases.

Case 1: $$\left\{\begin{array}{rll}
C_n^l(j-l)\gamma^l&>&C_n^k(j-k)\gamma^k,\\
kC_n^k\gamma^{k-1}&>&lC_n^l\gamma^{l-1}.
\end{array}\right.$$

Case 2: $$\left\{\begin{array}{rll}
C_n^l(j-l)\gamma^l&<&C_n^k(j-k)\gamma^k,\\
kC_n^k\gamma^{k-1}&<&lC_n^l\gamma^{l-1}.
\end{array}\right.$$

For the Case 1, if $m=k$, then
$$\left\{\begin{array}{rlll}
\gamma^{k-l}&>&\dfrac{lC_n^l}{kC_n^k},&j<k,\\
\gamma^{k-l}&>&\dfrac{lC_n^l}{kC_n^k},&j=k.
\end{array}\right.$$
 So if $m=k$, we have
\begin{equation}\label{eq4}\gamma^{k-l}>\frac{lC_n^l}{kC_n^k}.\end{equation} If $m>k$, then
$$\left\{\begin{array}{rlll}
\gamma^{k-l}&>&\dfrac{lC_n^l}{kC_n^k},&j\leq k,\\
\dfrac{lC_n^l}{kC_n^k}&<&\gamma^{k-l}< \dfrac{(m-l)C_n^l}{(m-k)C_n^k},&j\geq k+1.
\end{array}\right.$$
So if $m>k$, we have
\begin{equation}\label{eq5}\frac{lC_n^l}{kC_n^k}<\gamma^{k-l}< \dfrac{(m-l)C_n^l}{(m-k)C_n^k}.\end{equation}

For the Case 2, whether $m=k$ or $m>k$, Case 2 is impossible. The Lemma is proved.
\end{proof}

\begin{lemma}\label{lemma2}
Let $2\leq k\leq m\leq n, 0\leq l<k,$ and suppose that $u\in C^1(\mathbb{R}^{n}\b B_1)\cap C^2(\mathbb{R}^n\b\o{B_1})$ is a radial solution of \eqref{1eq1} and \eqref{1eq2}. Set
$$\alpha=C_n^k(u^{\prime}(1))^k-C_n^l(u^{\prime}(1))^l.$$ Then $u$ is uniformly $k-$convex if and only if $\alpha\in [\alpha_1,+\infty)$, and $u$ is uniformly $m(m>k)-$convex if and only if $\alpha\in[\alpha_1,\alpha_2]$, where $\alpha_1,\alpha_2$ are given by \eqref{alpha1}, \eqref{alpha2}.

\end{lemma}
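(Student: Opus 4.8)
The plan is to reduce the question of uniform $k$-convexity (resp. $m$-convexity) of the radial solution $u$ to a pointwise condition on its eigenvalues, and then to translate that condition into a condition on the single parameter $\alpha$ by inverting the relation $U = U_r(\alpha)$. First I would write down the eigenvalues of $D^2 u$ for a radial function: with $u = u(r)$, $r = |x|$, the Hessian has eigenvalue $\omega := u''(r)$ once and $\gamma := u'(r)/r$ with multiplicity $n-1$, so $\lambda = (\omega, \gamma, \dots, \gamma)$. Plugging this into $S_k(D^2u)/S_l(D^2u) = 1$ and integrating the resulting ODE (this is the standard computation: $\frac{d}{dr}\big(C_n^k (u')^k - C_n^l (u')^l\big)$ or rather its $r^n$-weighted version is controlled), one finds that $\gamma = \gamma(r)$ satisfies precisely
$$\gamma^k - \frac{C_n^l}{C_n^k}\gamma^l - \frac{\alpha}{C_n^k r^n} = 0, \qquad r > 1,$$
with $\alpha = C_n^k (u'(1))^k - C_n^l (u'(1))^l$ coming from the boundary data at $r=1$; that is, $\gamma(r) = U_r(\alpha)$, selecting the branch with $U > (lC_n^l/(kC_n^k))^{1/(k-l)}$ as in \eqref{Usup}.

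Next I would invoke Lemma \ref{lemma1}: since $\lambda = (\omega, \gamma, \dots, \gamma)$ with $\sigma_{k,l}(\lambda) = 1$, the vector $\lambda$ lies in $\Gamma_m$ (i.e.\ $u$ is uniformly $m$-convex at the point $x$ with $|x| = r$) if and only if
$$\left(\frac{lC_n^l}{kC_n^k}\right)^{\frac{1}{k-l}} < \gamma(r) < \gamma_m,$$
where $\gamma_m = (\,(m-l)C_n^l/((m-k)C_n^k)\,)^{1/(k-l)}$ for $m > k$ and $\gamma_m = +\infty$ for $m = k$. Thus $u$ is uniformly $k$-convex on all of $\mathbb{R}^n \setminus \overline{B_1}$ iff the \emph{lower} bound $\gamma(r) > (lC_n^l/(kC_n^k))^{1/(k-l)}$ holds for every $r > 1$, and uniformly $m$-convex ($m > k$) iff in addition the \emph{upper} bound $\gamma(r) < \gamma_m$ holds for every $r > 1$. (I should also check the boundary point $r = 1$ itself, or note that $u \in C^1$ up to $\partial B_1$ lets us pass to the limit; and verify that once $\gamma$ stays in the open interval the defining cubic-type equation indeed has the selected root, so no branch ambiguity arises.)

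The final step is to convert "for every $r > 1$" into an interval condition on $\alpha$. For fixed $r$, the map $\alpha \mapsto U_r(\alpha)$ is strictly increasing on the relevant branch (differentiate the defining equation implicitly: $(kU^{k-1} - \frac{lC_n^l}{C_n^k}U^{l-1})\,\partial_\alpha U = \frac{1}{C_n^k r^n} > 0$, and the coefficient of $\partial_\alpha U$ is positive exactly because $U > (lC_n^l/(kC_n^k))^{1/(k-l)}$), hence $U_r$ is invertible there. Therefore $\gamma(r) = U_r(\alpha) > (lC_n^l/(kC_n^k))^{1/(k-l)}$ for all $r > 1$ is equivalent to $\alpha \geq U_r^{-1}\big((lC_n^l/(kC_n^k))^{1/(k-l)}\big)$ for all $r > 1$, i.e.\ to $\alpha \geq \sup_{r>1} U_r^{-1}\big((lC_n^l/(kC_n^k))^{1/(k-l)}\big) = \alpha_1$; note the lower bound on $\gamma$ is open but since $\alpha_1$ is a supremum the closed condition $\alpha \geq \alpha_1$ is what survives (one must check the sup is attained or handle the borderline $\alpha = \alpha_1$ by a limiting/continuity argument showing it still gives a genuine uniformly $k$-convex solution). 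Similarly $\gamma(r) < \gamma_m$ for all $r > 1$ is equivalent to $\alpha \leq \inf_{r>1} U_r^{-1}(\gamma_m) = \alpha_2$. Combining, $u$ is uniformly $k$-convex iff $\alpha \in [\alpha_1, +\infty)$ and uniformly $m$-convex iff $\alpha \in [\alpha_1, \alpha_2]$.

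The main obstacle I anticipate is the careful treatment of the endpoints $\alpha = \alpha_1$ and $\alpha = \alpha_2$: Lemma \ref{lemma1} gives strict inequalities on $\gamma$, yet the claimed $\alpha$-intervals are closed, so one has to argue that at $\alpha = \alpha_1$ (and $\alpha = \alpha_2$) the supremum (infimum) over $r > 1$ is either not attained, or attained only in a limit $r \to 1^+$ or $r \to \infty$, so that $\gamma(r)$ stays strictly inside the admissible open interval for every actual $r > 1$. This requires understanding the monotonic behavior of $r \mapsto U_r^{-1}(\text{const})$ — equivalently, using the defining equation to see how $\gamma(r)$ moves as $r$ ranges over $(1,\infty)$ — which is where the real work lies; the rest is bookkeeping with the implicit function theorem and Lemma \ref{lemma1}.
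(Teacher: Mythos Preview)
Your proposal is correct and follows essentially the same route as the paper: compute the eigenvalues $(\omega,\gamma,\dots,\gamma)$ of $D^2u$, integrate the radial ODE to obtain $U^k-\tfrac{C_n^l}{C_n^k}U^l-\tfrac{\alpha}{C_n^k r^n}=0$, apply Lemma~\ref{lemma1}, use the implicit function theorem to see $\alpha\mapsto U_r(\alpha)$ is strictly increasing, and then invert to convert the pointwise bounds on $\gamma(r)$ into the sup/inf conditions defining $\alpha_1,\alpha_2$. Your caveat about the endpoints $\alpha=\alpha_1,\alpha_2$ is well taken---the paper simply asserts the passage from the strict inequalities on $U_r(\alpha)$ for each $r>1$ to the closed interval $[\alpha_1,\alpha_2]$ without further justification, so you are not missing an argument that the paper supplies.
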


\begin{proof} Let $u(x)=u(r)=u(|x|)\in C^1(\mathbb{R}^n\b B_1)\cap C^2(\mathbb{R}^n\b \o{B_1})$ be a radial solution
of \eqref{1eq1} and \eqref{1eq2}.
Then
$$D_{ij}u=(ru^{\prime\prime}-u^{\prime})\dfrac{x_ix_j}{r^3}+u^{\prime}\dfrac{\delta_{ij}}{r}, i,j=1,\dots,n, r>1.$$
So the eigenvalues of the Hessian matrix $D^2u$ are
\begin{equation}\label{2eq1}\lambda_1=u^{\prime\prime},\lambda_2=\cdots=\lambda_n=\dfrac{u^{\prime}}{r}.\end{equation}
In view of Lemma \ref{lem-w}, we know that
$$\gamma=\dfrac{u^{\prime}}{r}\geq 0, r\geq 1.$$

Moreover,
$$\sigma_k(\lambda(D^2u))=u^{\prime\prime}C_{n-1}^{k-1}\left(\dfrac{u^{\prime}}{r}\right)^{k-1}+C_{n-1}^k\left(\dfrac{u^{\prime}}{r}\right)^{k}.$$
Hence
$$\dfrac{u^{\prime\prime}C_{n-1}^{k-1}\left(\frac{u^{\prime}}{r}\right)^{k-1}+C_{n-1}^k\left(\frac{u^{\prime}}{r}\right)^{k}}
{u^{\prime\prime}C_{n-1}^{l-1}\left(\frac{u^{\prime}}{r}\right)^{l-1}+C_{n-1}^l\left(\frac{u^{\prime}}{r}\right)^{l}}=1,$$
i.e.,
\begin{equation}\label{radial}\dfrac{C_n^k(r^{n-k}(u^{\prime})^k)^{\prime}}{C_n^l(r^{n-l}(u^{\prime})^l)^{\prime}}=1.\end{equation}
Then integrating the above equality from $1$ to $r$, we have
\begin{equation}\label{eq6}(u^{\prime})^{k-l}=\dfrac{C_n^l}{C_n^k}r^{k-l}\left(1+\dfrac{\alpha}{C_n^lr^n(\frac{u^{\prime}}{r})^l}\right).\end{equation}
That is
$$\left(\frac{u^{\prime}}{r}\right)^k-\frac{C_n^l}{C_n^k}\left(\frac{u^{\prime}}{r}\right)^l-\frac{\alpha}{C_n^kr^n}=0.$$
Let $U(r):=u^{\prime}(r)/r$. Then $U$ satisfies
\begin{equation}\label{Uequation}U^k-\frac{C_n^l}{C_n^k}U^l-\frac{\alpha}{C_n^kr^n}=0,\ r>1.\end{equation}
Set $$F(U,\alpha)=U^k-\frac{C_n^l}{C_n^k}U^l-\frac{\alpha}{C_n^kr^n}.$$
Then since by \eqref{1}, $U^{k-l}>lC_n^l/kC_n^k,$ thus
$$\frac{\partial F}{\partial U}=kU^{k-1}-l\frac{C_n^l}{C_n^k}U^{l-1}>0.$$
Due to the implicit function theorem, \eqref{Uequation} has a unique function $U=U(r,\alpha)$ and
\begin{equation}\label{U}\frac{\partial U}{\partial\alpha}=\frac{1}{r^n(kC_n^kU^{k-1}-lC_n^lU^{l-1})}>0.\end{equation}
And in virtue of \eqref{Uequation}, for $r>1$,
\begin{equation}\label{Uinfty}\lim_{\alpha\to+\infty}U(r,\alpha)=+\infty.\end{equation}
Set $U_{r}(\alpha):=U(r,\alpha)$. Then by \eqref{U}, the inverse function of $U_{r}(\alpha)$ exists, we denote it as $U^{-1}_{r}(\alpha)$. Moreover, $dU^{-1}_r/d\alpha>0.$
So by Lemma \ref{lemma1}, $u$ is uniformly $m-$convex if and only if
$$\left(\dfrac{lC_n^l}{kC_n^k}\right)^{\frac{1}{k-l}}<U_{r}(\alpha)<\gamma_m.$$
As a result, $$U^{-1}_{r}\left(\left(\dfrac{lC_n^l}{kC_n^k}\right)^{\frac{1}{k-l}}\right)<\alpha<U^{-1}_{r}(\gamma_m),\ r>1,$$
which is equivalent to
$$\alpha_1:=\sup_{r>1}U^{-1}_{r}\left(\left(\dfrac{lC_n^l}{kC_n^k}\right)^{\frac{1}{k-l}}\right)\leq \alpha<+\infty,\ \mbox{if}\ m=k,$$
and
$$\alpha_1:=\sup_{r>1}U^{-1}_{r}\left(\left(\dfrac{lC_n^l}{kC_n^k}\right)^{\frac{1}{k-l}}\right)\leq \alpha\leq \inf_{r>1}U^{-1}_{r}\left(\left(\dfrac{(m-l)C_n^l}{(m-k)C_n^k}\right)^{\frac{1}{k-l}}\right):=\alpha_2,\ \mbox{if}\ m>k.$$
Lemma \ref{lemma2} is proved.
\end{proof}

{\bf Proof of Theorem \ref{thm-ns}}\ By \eqref{eq6},
\begin{equation}\label{Ueq}U^{k-l}(r,\alpha)=\dfrac{C_n^l}{C_n^k}\left(1+\dfrac{\alpha}{C_n^lr^nU^l(r,\alpha)}\right).\end{equation}
So $$\left(\frac{u^{\prime}}{r}\right)^{k-l}=\dfrac{C_n^l}{C_n^k}\left(1+\dfrac{\alpha}{C_n^lr^nU^l(r,\alpha)}\right).$$
Integrating the above equality on both sides for $r$, we have
\begin{equation}\label{u1}\begin{array}{rlll}
u(x)&=&u(1)+\left(\dfrac{C_n^l}{C_n^k}\right)^{\frac{1}{k-l}}\dint_1^{|x|}
\left\{s\left[\left(1+\dfrac{\alpha}{C_n^ls^nU^l(s,\alpha)}\right)^{\frac{1}{k-l}}-1\right]+s\right\}ds\\
&=&\dfrac{\hat{a}}{2}|x|^2+b-\dfrac{\hat{a}}{2}+\hat{a}\dint_1^{\infty}s\left[\left(1+\dfrac{\alpha}{C_n^ls^nU^l(s,\alpha)}\right)^{\frac{1}{k-l}}-1\right]ds\\
&&-\hat{a}\dint_{|x|}^{\infty}s\left[\left(1+\dfrac{\alpha}{C_n^ls^nU^l(s,\alpha)}\right)^{\frac{1}{k-l}}-1\right]ds\\
&=&\dfrac{\hat{a}}{2}|x|^2+c-\hat{a}\dint_{|x|}^{\infty}s\left[\left(1+\dfrac{\alpha}{C_n^ls^nU^l(s,\alpha)}\right)^{\frac{1}{k-l}}-1\right]ds,
\end{array}\end{equation}
where $\hat{a}=(C_n^l/C_n^k)^{\frac{1}{k-l}},$ and
\begin{equation}\label{c1}c=\mu(\alpha)=b-\dfrac{\hat{a}}{2}+\hat{a}\dint_1^{\infty}s
\left[\left(1+\dfrac{\alpha}{C_n^ls^nU^l(s,\alpha)}\right)^{\frac{1}{k-l}}-1\right]ds.\end{equation}
Since by Lemma \ref{lemma1}, we know that $U$ has a lower bound. So, for $n\geq 3$, the infinite integral in \eqref{c1} is convergent. Let
$$V(r,\alpha)=\frac{\alpha}{U^l(r,\alpha)}.$$
Then by \eqref{U},
\begin{align*}\frac{\partial V}{\partial \alpha}=&U^{-l-1}\left(U-\alpha l\frac{\partial U}{\partial \alpha}\right)\\
=&U^{-l-1}\dfrac{r^n(kC_n^kU^{k}-lC_n^lU^{l})-l\alpha}{r^n(kC_n^kU^{k}-lC_n^lU^{l})}.
\end{align*}
From \eqref{Ueq}, $\alpha=r^n(C_n^kU^{k}-C_n^lU^{l})$. By Lemma \ref{lemma1}, $U^{k-l}>lC_n^l/kC_n^k$, so $\partial V/\partial \alpha>0.$ Then $\mu$ is strictly increasing in $\alpha$. Based on \eqref{Uinfty} and \eqref{Uequation}, we know that for $r>1$,
$$\lim_{\alpha\to+\infty}\dfrac{\alpha}{C_n^lr^nU^l(r,\alpha)}=+\infty,$$
and then $\mu(+\infty)=+\infty.$ Thus by Lemma \ref{lemma2}, we have that $u$ is uniformly $m-$convex if and only if $c\in [\mu(\alpha_1),+\infty)$ for $m=k$, and $c\in[\mu(\alpha_1),\mu(\alpha_2)]$ for $m>k$.

Moreover, by Lemma \ref{lemma1}, since $U^{k-l}>lC_n^l/kC_n^k$, then
$$\hat{a}\dint_{|x|}^{\infty}s\left[\left(1+\dfrac{\alpha}{C_n^ls^nU^l(s,\alpha)}\right)^{\frac{1}{k-l}}-1\right]ds=O(|x|^{2-n}), |x|\to\infty.$$
As a result, by \eqref{u1},
$$u(x)=\dfrac{\hat{a}}{2}|x|^2+c+O(|x|^{2-n}),\ |x|\to\infty.$$
Then Theorem \ref{thm-ns} is proved. $\hfill\Box$

If $k=n=m=3, l=1$, then the exterior Dirichlet problem \eqref{1eq1}, \eqref{1eq2} becomes the problem of special Lagrangian equation
\begin{eqnarray}
\label{sleq1}\det D^2u&=&\Delta u\
\ \mbox{in}\ \
\mathbb{R}^3\backslash\o{B_{1}},\\
\label{sleq2}u&=&b\ \ \mbox{on}\ \ \partial B_1.
\end{eqnarray}
According to \eqref{radial}, the radially symmetric solution of the problem \eqref{sleq1} and \eqref{sleq2} satisfies
\begin{equation}\label{srad}
(u^{\prime})^3-3r^2u^{\prime}-(u^{\prime}(1))^3+3u^{\prime}(1)=0.
\end{equation}
From Theorem \ref{thm-ns}, we can directly deduce
\begin{corollary}\label{cor1}
 The exterior Dirichlet problem \eqref{sleq1}, \eqref{sleq2} has a unique locally convex function $u\in \Phi_3$ satisfying
$$u(x)=\dfrac{\sqrt{3}}{2}|x|^2+c+O(|x|^{-1}),\ |x|\to\infty$$
if and only if $c\in[\mu(\alpha_1),+\infty)$, where
$$c=\mu(\alpha)=b-\dfrac{\sqrt{3}}{2}+\sqrt{3}\dint_1^{\infty}s\left[\left(1+\dfrac{\alpha}{3s^{2}\tilde{U}(s,\alpha)}\right)^{\frac{1}{2}}-1\right]ds,$$
$$\alpha_1:=\disp\sup_{r\geq 1}\tilde{U}^{-1}_r(1),$$
and $\tilde{U}=\tilde{U}(r,\alpha)=\tilde{U}_r(\alpha)$ satisfies
$$\tilde{U}^3-3r^2\tilde{U}-\alpha=0.$$
\end{corollary}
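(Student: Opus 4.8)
The plan is to deduce Corollary~\ref{cor1} from Theorem~\ref{thm-ns} by specializing to $n=k=m=3$, $l=1$; the analytic content is already contained in Lemmas~\ref{lem-w}--\ref{lemma2} and the proof of Theorem~\ref{thm-ns}, so the task reduces to a change of variables and a substitution of constants. First I would note that for $n=3$ one has $\det D^2u=S_3(D^2u)$ and $\Delta u=S_1(D^2u)$, so \eqref{sleq1}--\eqref{sleq2} is exactly problem \eqref{1eq1}--\eqref{1eq2} with $k=3$, $l=1$; taking $m=k=3$ (which for $n=3$ is exactly the convex class), the hypotheses $2\le k\le m\le n$, $0\le l<k$, $n\ge3$ of Theorem~\ref{thm-ns} are satisfied. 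Hence there is a unique $u\in\Phi_3$ with $u(x)=\tfrac{\hat a}{2}|x|^2+c+O(|x|^{2-n})$ if and only if $c\in[\mu(\alpha_1),+\infty)$ (the case $m=k$), and it only remains to compute $\hat a$, $\mu$, and $\alpha_1$ for these parameters.

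For the substitution I would use $\hat a=(C_3^1/C_3^3)^{1/(k-l)}=(3/1)^{1/2}=\sqrt3$, which produces the leading term $\tfrac{\sqrt3}{2}|x|^2$ and the remainder $O(|x|^{2-n})=O(|x|^{-1})$ of the statement. The cubic $U^k-\tfrac{C_n^l}{C_n^k}U^l-\tfrac{\alpha}{C_n^kr^n}=0$ defining $U$ in Theorem~\ref{thm-ns} becomes $U^3-3U-\alpha r^{-3}=0$ here; multiplying by $r^3$ shows $\tilde U:=rU$ satisfies $\tilde U^3-3r^2\tilde U-\alpha=0$, and since $U=u'/r$ by \eqref{2eq1} this identifies $\tilde U=u'$, recovers \eqref{srad}, and gives $\alpha=C_3^3(u'(1))^3-C_3^1u'(1)=(u'(1))^3-3u'(1)$, consistently with $\alpha=C_n^k(u'(1))^k-C_n^l(u'(1))^l$. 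Rewriting $C_n^ls^nU^l=3s^3U=3s^2\tilde U$ converts $\mu$ into the claimed $\mu(\alpha)=b-\tfrac{\sqrt3}{2}+\sqrt3\int_1^\infty s\bigl[(1+\tfrac{\alpha}{3s^2\tilde U(s,\alpha)})^{1/2}-1\bigr]\,ds$, while the lower threshold $U>(lC_n^l/(kC_n^k))^{1/(k-l)}=1$ supplied by Lemma~\ref{lemma1} (equivalently $\tilde U(r,\alpha)>r$), together with the existence of the inverse $\tilde U_r^{-1}$ coming from $\partial U/\partial\alpha>0$, yields $\alpha_1=\sup_{r\ge1}\tilde U_r^{-1}(1)$.

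There is no genuine obstacle; the only points requiring attention are keeping track of which root of the cubic is meant --- the one with $U>1$, singled out by \eqref{Usup} and Lemma~\ref{lemma1} --- and carrying the substitutions $U\mapsto\tilde U=rU$ and $U_r^{-1}\mapsto\tilde U_r^{-1}$ correctly through the formulas for $\mu$ and $\alpha_1$. If one prefers an argument not invoking Theorem~\ref{thm-ns}, the same conclusion follows directly from the cubic \eqref{srad}: solve it for $u'=\tilde U(r,\alpha)$ by Cardano's formula, use Lemmas~\ref{lem-w} and \ref{lemma1} to select the admissible branch and the admissible range of $\alpha$, then integrate $u(r)=b+\int_1^r u'(s)\,ds$ and expand as $r\to\infty$ to read off $c=\mu(\alpha)$ and the $O(|x|^{-1})$ remainder, exactly in the spirit of the proofs of Lemmas~\ref{lemma1}--\ref{lemma2} and Theorem~\ref{thm-ns}.
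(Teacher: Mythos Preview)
Your proposal is correct and follows exactly the paper's approach --- the paper simply states that Corollary~\ref{cor1} is deduced directly from Theorem~\ref{thm-ns}, and you carry out that specialization explicitly. The only step to tighten is the translation of $\alpha_1$: under $\tilde U=rU$ the direct substitution of $U_r^{-1}(1)$ gives $\sup_{r\ge1}\tilde U_r^{-1}(r)$ rather than $\sup_{r\ge1}\tilde U_r^{-1}(1)$, and you should note that these two suprema coincide (both equal $-2$) because each is attained at $r=1$, where $\tilde U=r=1$.
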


\begin{proof}[Proof of Theorem \ref{hqn2}] Let $u(x)=u(|x|)=u(r)$ be the radially symmetric solution of the problem \eqref{sl2eq1} and \eqref{sl2eq2}. Then
\begin{equation}\label{s2rad}
u^{\prime\prime}\dfrac{u^{\prime}}{r}=u^{\prime\prime}+\dfrac{u^{\prime}}{r},
\end{equation}
i.e.,
$$((u^{\prime})^2)^{\prime}=2(ru^{\prime})^{\prime}.$$
Integrating on both sides from $1$ to $r$, we have
\begin{equation}\label{u21}(u^{\prime})^2-2ru^{\prime}-\rho=0,\end{equation}
where $\rho=(u^{\prime}(1))^2-2u^{\prime}(1)=(u^{\prime}(1)-1)^2-1.$
By Lemma \ref{lemma1}, we know that for $k=n=m=2,l=1$,
$$\dfrac{u^{\prime}}{r}>\left(\dfrac{lC_n^l}{kC_n^k}\right)^{\frac{1}{k-l}}=1,$$
so $u^{\prime}(r)>r$ for any $r\geq 1.$ Therefore \eqref{u21} has a solution
$$u^{\prime}(r)=r+\sqrt{r^2+\rho}$$
if and only if $r^2+\rho\geq 0$, i.e., $\rho\geq -1.$ Integrate on both sides from $1$ to $r$,
$$u(r)=b-\dfrac{1}{2}+\dfrac{1}{2}r^2+\dfrac{1}{2}[r\sqrt{r^2+\rho}+\rho \ln(r+\sqrt{r^2+\rho})]-\dfrac{1}{2}[\sqrt{1+\rho}+\rho \ln(1+\sqrt{1+\rho})].$$
Since
$$r\sqrt{r^2+\rho}=r^2+\dfrac{\rho}{2}+O(r^{-2}),\ r\to\infty,$$
and
$$\ln(r+\sqrt{r^2+\rho})=\ln r+\ln 2+O(r^{-2}),\ r\to\infty,$$
then
$$u(r)=r^2+\dfrac{\rho}{2}\ln r+\nu(\rho)+O(r^{-2}),$$
where
$$\nu(\rho)=b-\dfrac{1}{2}+\dfrac{\rho}{4}+\dfrac{\rho}{2}\ln 2-\dfrac{1}{2}[\sqrt{1+\rho}+\rho \ln(1+\sqrt{1+\rho})].$$
Theorem \ref{hqn2} is proved.
\end{proof}
Similar to \cite{WB}, $\nu(\rho)$ increases in $[-1,0]$ and decreases in $[0,+\infty)$. So
$$\nu(\rho)\leq \nu(0)=b-1,\ \rho\geq -1.$$
\begin{corollary}
The exterior Dirichlet problem \eqref{sl2eq1}, \eqref{sl2eq2} has a unique solution $u\in \Phi_2$ satisfying \eqref{u22} if and only if $c\leq b-1.$
\end{corollary}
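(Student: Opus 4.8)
The claim follows directly from Theorem~\ref{hqn2} once we determine the range of the one-variable function $\nu$. Indeed, Theorem~\ref{hqn2} says that for each $\rho\ge-1$ there is a unique $u\in\Phi_2$ solving \eqref{sl2eq1}--\eqref{sl2eq2} with the expansion \eqref{u22} carrying that value of $\rho$, and that the constant is then $c=\nu(\rho)$; conversely no such solution exists for $\rho<-1$. So the set of constants $c$ realized by a solution with an asymptotic expansion of the form \eqref{u22} is exactly the image $\nu([-1,\infty))$, and it remains to prove $\nu([-1,\infty))=(-\infty,b-1]$.

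The first step is the monotonicity of $\nu$. Writing $g(\rho)=\sqrt{1+\rho}+\rho\ln(1+\sqrt{1+\rho})$, so that $\nu(\rho)=b-\tfrac12+\tfrac\rho4+\tfrac\rho2\ln2-\tfrac12g(\rho)$, I would compute $g'$ and then pass to the variable $t=\sqrt{1+\rho}\ (t\ge0)$: the two singular pieces $\dfrac{1}{2\sqrt{1+\rho}}=\dfrac1{2t}$ and $\dfrac{\rho}{2\sqrt{1+\rho}\,(1+\sqrt{1+\rho})}=\dfrac{t-1}{2t}$ add up to $\tfrac12$, so that $g'(\rho)=\tfrac12+\ln(1+\sqrt{1+\rho})$ is in fact bounded near $\rho=-1$. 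Hence
$$\nu'(\rho)=\tfrac14+\tfrac12\ln2-\tfrac12g'(\rho)=\tfrac12\ln\frac{2}{1+\sqrt{1+\rho}},$$
which is positive on $[-1,0)$, zero at $\rho=0$, and negative on $(0,\infty)$. Therefore $\nu$ increases on $[-1,0]$, decreases on $[0,\infty)$, and attains its maximum $\nu(0)=b-1$. In particular every solution as in the statement has $c=\nu(\rho)\le b-1$, giving the ``only if'' direction.

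For ``if'' I would note that $\nu$ is continuous, $\nu(0)=b-1$, and $\nu(\rho)\to-\infty$ as $\rho\to+\infty$ (the term $\tfrac\rho2\ln(1+\sqrt{1+\rho})$ dominates the linear ones), so already the decreasing branch maps $[0,\infty)$ onto $(-\infty,b-1]$; thus every $c\le b-1$ equals $\nu(\rho)$ for some admissible $\rho$, and Theorem~\ref{hqn2} furnishes the corresponding solution. The only step that is not routine is the evaluation of $\nu'$: the derivative of $g$ is awkward in the variable $\rho$ because of the $\frac{1}{2\sqrt{1+\rho}}$ blow-up at $\rho=-1$, but the substitution $t=\sqrt{1+\rho}$ makes the singular terms cancel and leaves the clean logarithmic expression, from which everything follows.
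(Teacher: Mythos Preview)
Your proof is correct and follows essentially the same approach as the paper: establish the monotonicity of $\nu$ on $[-1,0]$ and $[0,\infty)$, deduce the maximum $\nu(0)=b-1$, and conclude that the image $\nu([-1,\infty))=(-\infty,b-1]$. The paper merely asserts the monotonicity (referring to \cite{WB}) and the maximum value in the line preceding the corollary, whereas you supply the explicit computation $\nu'(\rho)=\tfrac12\ln\dfrac{2}{1+\sqrt{1+\rho}}$ via the substitution $t=\sqrt{1+\rho}$ and verify $\nu(\rho)\to-\infty$ for the surjectivity---details the paper leaves implicit.
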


\begin{remark}
If $k=n=2,l=0,$ we can refer to Theorem 2 in \cite{WB}.
\end{remark}

\section{Generalized symmetric functions, subsolutions and supersolutions}

In this section, we will construct a family of generalized symmetric smooth subsolutions and supersolutions of \eqref{hq1}. Now, we first give the definitions of generalized symmetric functions and generalized symmetric solutions, see \cite{BLL}.

\begin{definition}\label{Definition}
Let the diagonal matrix $A=\mbox{diag}(a_{1},\dots,a_{n})$, a function $u$ is called a generalized symmetric function with respect to $A$ if $u$ is a function of $$r:=r_A(x):=\sqrt{x^{T}Ax}=\left(\sum \limits_{i=1}^{n}a_{i}x_{i}^{2}\right)^{\frac{1}{2}}.$$

If $u$ is both a generalized symmetric function with respect to $A$ and a subsolution (supersolution, solution) of \eqref{hq1}, then we say that $u$ is a generalized symmetric subsolution (supersolution, solution) of \eqref{hq1}.
\end{definition}

Since
$$r^2=x^{T}Ax=\sum_{i=1}^na_ix_i^2,$$
then
$$2r\partial_{x_i}r=\partial_{x_i}(r^2)=2a_ix_i\ \ \mbox{and}\ \ \partial_{x_i}r=\frac{a_ix_i}{r}.$$
So $\tilde{\Phi}(x):=\tilde{\phi}(r)$ with $\tilde{\phi}\in C^2[0,+\infty)$ satisfies that
$$\partial_{x_i}\tilde{\Phi}(x)=\tilde{\phi}'(r)\partial_{x_i}r=\frac{\tilde{\phi}'(r)}{r}a_ix_i,$$
\begin{align*}
\partial_{x_ix_j}\tilde{\Phi}(x)=&\frac{\tilde{\phi}'(r)}{r}a_i\delta_{ij}+\frac{\tilde{\phi}''(r)-\frac{\tilde{\phi}'(r)}{r}}{r^2}(a_ix_i)(a_jx_j)\\
=&\tilde{h}(r)a_i\delta_{ij}+\frac{\tilde{h}'(r)}{r}(a_ix_i)(a_jx_j),
\end{align*}
where $\tilde{h}(r)=\tilde{\phi}^{\prime}(r)/r$.
Consequently,
$$D^2\tilde{\Phi}=\left(\tilde{h}(r)a_i\delta_{ij}+\frac{\tilde{h}'(r)}{r}(a_ix_i)(a_jx_j)\right)_{n\times n}.$$
Therefore by Lemma \ref{A1}, we can get that
\begin{align}\label{Skl}
&S_k(D^2\tilde{\Phi})=\sigma_k(\lambda(D^2\tilde{\Phi}))\nonumber\\
=&\sigma_k(a)\tilde{h}(r)^k+\frac{\tilde{h}'(r)}{r}\tilde{h}(r)^{k-1}\sum_{i=1}^{n}\sigma_{k-1;i}(a)a_i^2x_i^2.
\end{align}

In this paper, we always assume that $A$ is diagonal but not $A=\hat{a}I,\hat{a}=(C_n^l/C_n^k)^{\frac{1}{k-l}}$ because the Hessian quotient equation is not invariant under affine transformation. Detailed arguments for this can be referred to \cite{BLL}.
Let the matrix $A=\mbox{diag}(a_{1},\dots,a_{n})$ and the vector $a=(a_{1},\dots,a_{n})$. If $A\in \mathcal{A}_{k,l}$, then we have $a_{i}>0 (i=1,\dots,n)$ and $\sigma_{k}(a)/ \sigma_{l}(a)= 1$. For any fixed $t-$tuple $\{i_{1},\dots,i_{t}\}\subset\{1,\dots,n\}, 1\leq t\leq n-k$, let
$$\sigma_{k;i_{1}\cdots i_{t}}(a)=\sigma_{k}(a)|_{a_{i1}=\cdots=a_{it}=0},$$
that is, $\sigma_{k;i_{1}\dots i_{t}}(a)$ is the $k-$th order elementary symmetric function of the $n-t$ variables
$\{a_{i}|i\in\{1,\dots,n\}\backslash\{i_{1},\dots,i_{t}\}\}$.

%The function $\sigma_k$ has the following properties, see \cite{wang xujia-k-Hessian}.
%\begin{equation}\label{L1}
%\sigma_{k}(a)=\sigma_{k;i}(a)+a_{i}\sigma_{k-1;i}(a),~~~~i=1,\dots,n,
%\end{equation}
%\begin{equation}\label{L2}
%\sum\limits_{i=1}^{n}a_{i}\sigma_{k-1;i}(a)=k\sigma_{k}(a).
%\end{equation}

Let $a=(a_{1},\dots,a_{n})\in \mathbb{R}^n\b\{0\}$, then by the definitions of $\Lambda_k$ and $\o{t}_k$,
$$\Lambda_k:=\Lambda_k(a,x):=\dfrac{\sum_{i=1}^n\sigma_{k-1;i}(a)a_i^2x_i^2}{\sigma_k(a)\sum_{i=1}^na_ix_i^2},\ \mbox{for any}\ x\in \mathbb{R}^n\b\{0\},$$
$$
\o{t}_{k}:=\o{t}_{k}(a):=\sup\limits_{x\in\mathbb{R}^n\b\{0\}}\Lambda_k(a,x),$$
and
$$\u{t}_{k}:=\u{t}_{k}(a):=\inf\limits_{x\in\mathbb{R}^n\b\{0\}}\Lambda_k(a,x).$$
\begin{lemma}\label{lem5} Let the vector $a=(a_1,a_2,\dots,a_n)$ satisfy $0<a_1\leq a_2\leq \dots\leq a_n.$ Then
for $1\leq k\leq n$,
\begin{equation}\label{thetak1}0<\u{t}_k\leq\frac{k}{n}\leq \o{t}_{k}\leq 1,\end{equation}
$$0=\o{t}_0<\frac{1}{n}\leq \frac{a_n}{\sigma_1(a)}=\o{t}_1\leq \o{t}_2\leq \dots\leq \o{t}_{n-1}<\o{t}_n=1,$$
and
$$0=\u{t}_0<\frac{a_1}{\sigma_1(a)}=\u{t}_1\leq \u{t}_2\leq \dots\leq \u{t}_{n-1}<\u{t}_n=1.$$
Moreover, for $1\leq k\leq n-1,$
$$\u{t}_k=\o{t}_k=\frac{k}{n}$$
if and only if $a_{1}=\cdots=a_{n}=\o{C}$ for some $\o{C}>0$.\end{lemma}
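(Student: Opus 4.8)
The plan is to reduce the sup and inf defining $\o t_k$ and $\u t_k$ to a maximum and minimum over $n$ explicit numbers, and then to analyse those numbers with the standard elementary symmetric function identities together with Newton's inequalities. First I would note that $\Lambda_k(a,x)$ depends on $x$ only through $t_i:=x_i^2\ge0$, so that
$$\Lambda_k(a,x)=\frac{\sum_{i=1}^n c_it_i}{\sum_{i=1}^n d_it_i},\qquad c_i:=\sigma_{k-1;i}(a)\,a_i^2,\quad d_i:=\sigma_k(a)\,a_i>0 .$$
As $x$ runs over $\mathbb{R}^n\b\{0\}$ the point $(t_1,\dots,t_n)$ runs over the nonnegative orthant minus the origin, and $\Lambda_k=\sum_i w_i(c_i/d_i)$ with $w_i:=d_it_i/\sum_j d_jt_j$ a probability vector; since every probability vector occurs (take $t_i=w_i/d_i$), this gives $\o t_k=\max_{1\le i\le n}(c_i/d_i)$ and $\u t_k=\min_{1\le i\le n}(c_i/d_i)$. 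Here $c_i/d_i=a_i\sigma_{k-1;i}(a)/\sigma_k(a)=1-\sigma_{k;i}(a)/\sigma_k(a)$ by $\sigma_k(a)=\sigma_{k;i}(a)+a_i\sigma_{k-1;i}(a)$, and evaluating at $w\equiv1/n$ together with $\sum_i\sigma_{k;i}(a)=(n-k)\sigma_k(a)$ shows that $k/n$ is a value of $\Lambda_k$; hence $\u t_k\le k/n\le\o t_k$ for $1\le k\le n$. (For $k=0$ one has $c_i\equiv0$, so $\o t_0=\u t_0=0<1/n$.)

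Next I would use the ordering $0<a_1\le\cdots\le a_n$. For $a_i\le a_j$, pulling out the remaining one of $a_i,a_j$ gives $\sigma_{k;i}(a)-\sigma_{k;j}(a)=(a_j-a_i)\,\sigma_{k-1;ij}(a)\ge0$, so $\sigma_{k;i}(a)$ is maximal at $i=1$ and minimal at $i=n$; therefore $\o t_k=1-\sigma_{k;n}(a)/\sigma_k(a)$ and $\u t_k=1-\sigma_{k;1}(a)/\sigma_k(a)$. From these, $\u t_k>0$ because $\sigma_k(a)>\sigma_{k;i}(a)$ for every $i$ (since $a_i\sigma_{k-1;i}(a)>0$), $\o t_k\le1$ trivially and strictly for $k\le n-1$ (then $\sigma_{k;n}(a)>0$), and the extreme terms are read off directly: $\o t_n=\u t_n=1$ (as $\sigma_{n;i}\equiv0$), $\o t_1=a_n/\sigma_1(a)\ge1/n$ and $\u t_1=a_1/\sigma_1(a)\le1/n$ by the arithmetic mean inequality. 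This already yields the first display and the endpoints of the two chains.

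For the monotonicity in $k$, set $b=(a_1,\dots,a_{n-1})$; then $\sigma_{k-1;n}(a)=\sigma_{k-1}(b)$, $\sigma_k(a)=\sigma_k(b)+a_n\sigma_{k-1}(b)$, so
$$\o t_k=\frac{a_n\sigma_{k-1}(b)}{\sigma_k(b)+a_n\sigma_{k-1}(b)}=\Bigl(1+\frac{\sigma_k(b)}{a_n\sigma_{k-1}(b)}\Bigr)^{-1},$$
and $\o t_k\le\o t_{k+1}$ is equivalent to $\sigma_{k-1}(b)\sigma_{k+1}(b)\le\sigma_k(b)^2$, which is exactly Newton's inequality (log-concavity) for the $n-1$ positive numbers in $b$; the top link $\o t_{n-1}<\o t_n=1$ is the trivial case $\sigma_n(b)=0$. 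The identical computation with $b'=(a_2,\dots,a_n)$ and $a_1$ in place of $a_n$ gives $\u t_1\le\cdots\le\u t_{n-1}<\u t_n=1$.

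Finally, for the equality statement, fix $1\le k\le n-1$. If $\u t_k=\o t_k$ then all the $c_i/d_i$ coincide, in particular $a_1\sigma_{k-1;1}(a)=a_n\sigma_{k-1;n}(a)$; expanding each side by pulling out $a_n$, resp.\ $a_1$, from the $n-2$ variables $a_2,\dots,a_{n-1}$ yields $(a_1-a_n)\,\sigma_{k-1;1n}(a)=0$, and since $k-1\le n-2$ the factor $\sigma_{k-1;1n}(a)$ (the $(k-1)$-th elementary symmetric function of $a_2,\dots,a_{n-1}$) is positive, so $a_1=a_n$ and hence $a_1=\cdots=a_n$. Conversely, if $a_1=\cdots=a_n=\o C$ then $c_i/d_i=\binom{n-1}{k-1}/\binom{n}{k}=k/n$ for every $i$, so $\Lambda_k\equiv k/n$ and $\u t_k=\o t_k=k/n$. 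I expect the only real obstacle to be organizational: carrying the degenerate ranges $k\in\{0,1,n-1,n\}$, where various $\sigma$'s vanish or are constant, consistently through every claim; the single non-elementary ingredient is Newton's inequality used for the monotonicity in $k$.
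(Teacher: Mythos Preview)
The paper does not actually prove this lemma: its ``proof'' consists solely of the citation ``See \cite{LLX1}.'' Your argument, by contrast, is a complete and correct self-contained proof. The reduction of $\Lambda_k$ to a convex combination of the $n$ ratios $a_i\sigma_{k-1;i}(a)/\sigma_k(a)$ is the right first move, and rewriting these ratios as $1-\sigma_{k;i}(a)/\sigma_k(a)$ together with the identity $\sigma_{k;i}(a)-\sigma_{k;j}(a)=(a_j-a_i)\sigma_{k-1;ij}(a)$ cleanly identifies the extremal indices as $i=1$ and $i=n$. The use of Newton's inequality $\sigma_{k-1}(b)\sigma_{k+1}(b)\le\sigma_k(b)^2$ for the $(n-1)$-vector $b$ obtained by deleting $a_n$ (resp.\ $a_1$) is exactly what is needed for the monotonicity chains, and the equality analysis via $(a_1-a_n)\sigma_{k-1;1n}(a)=0$ is correct since $k-1\le n-2$ guarantees positivity of the second factor. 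Your concern about the boundary cases $k\in\{0,1,n-1,n\}$ is well placed but you have handled each of them; nothing further is missing.
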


\begin{proof}
See \cite{LLX1}.
\end{proof}

\begin{remark}\label{rem1}
From \eqref{thetak1}, we know that
$$0<\frac{a_1}{\sigma_1(a)}\leq \u{t}_l\leq \frac{l}{n}<\frac{k}{n}\leq \o{t}_k\leq 1.$$
Then
$$0<\o{t}_k-\u{t}_l<1.$$
\end{remark}

Let $C_1$ be a positive constant and $\theta_0$ be sufficiently large. Assume that $\overline{g},\underline{g}\in C^{0}([0,+\infty))$ are positive functions of $r=\sqrt{x^{T}Ax}$  satisfying
$$0<\inf_{r\in[0,+\infty)}\u{g}(r)\leq \underline{g}(r)\leq g(x)\leq \overline{g}(r)\leq \sup_{r\in[0,+\infty)}\o{g}(r)<+\infty,\ x\in \mathbb{R}^n,$$
\begin{equation}\label{three}\underline{g}(r)\leq g_0(r)\leq \overline{g}(r),\ x\in \mathbb{R}^n,\end{equation}
$\u{g}(r)$ is strictly increasing in $r$ and for $\beta>2,$
\begin{equation}\label{2} \overline{g}(r)= g_0(r)+C_{1}r^{-\beta},~r> \theta_0,
 \end{equation}
$$\underline{g}(r)=g_0(r)-C_{1}r^{-\beta},~r>\theta_0.
$$

To construct the subsolutions of \eqref{hq1}, we need to seek the solutions or subsolutions in $\Gamma_k$ with
appropriate properties of
\begin{equation}\label{super-g}
\frac{S_{k}(D^{2}v)}{S_{l}(D^{2}v)}=\overline{g}.
\end{equation}
These solutions or subsolutions are apparently subsolutions of \eqref{hq1}. However, from the following Proposition \ref{prop1}, we can get that there does not exist generalized symmetric solution of \eqref{hq1} for $1\leq k\leq n-1$ unless $A=\hat{a}I$.

\begin{prop}\label{prop1}
Let $A={\rm{diag}}(a_{1},\dots,a_{n})\in \mathcal{A}_{k,l},0\leq l<k\leq n,$ and $0<r_1<r_2<\infty.$ If there exists a function $G\in C^2(r_1,r_2)$ such that $T(x)=G(r)=G(\sqrt{x^{T}Ax})$ is a generalized symmetric solution of \eqref{super-g}, then
$$k=n,\ \ \mbox{or}\ \ a_1=\cdots=a_n=\hat{a}=(C_n^l/C_n^k)^{\frac{1}{k-l}}.$$
\end{prop}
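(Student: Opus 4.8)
The plan is to compute $S_k(D^2T)/S_l(D^2T)$ explicitly for a generalized symmetric function $T(x)=G(r)$, use the formula \eqref{Skl}, and then argue that the resulting equation can only hold identically in $x$ (on the relevant annulus) when the coefficients of the various monomials in $x$ match up, which forces either $k=n$ or all $a_i$ equal. First I would write $h(r):=G'(r)/r$, so that by \eqref{Skl}
$$S_k(D^2T)=\sigma_k(a)h(r)^k+\frac{h'(r)}{r}h(r)^{k-1}\sum_{i=1}^n\sigma_{k-1;i}(a)a_i^2x_i^2,$$
and similarly with $k$ replaced by $l$ (using $S_0\equiv1$, $\sigma_{-1;i}\equiv 0$ when $l=0$, so that branch is handled separately). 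Dividing, the equation $S_k(D^2T)/S_l(D^2T)=\overline g(r)$ becomes, after clearing denominators,
$$\sigma_k(a)h^k+\frac{h'}{r}h^{k-1}\Sigma_k(x)=\overline g(r)\Bigl(\sigma_l(a)h^l+\frac{h'}{r}h^{l-1}\Sigma_l(x)\Bigr),$$
where $\Sigma_j(x):=\sum_{i=1}^n\sigma_{j-1;i}(a)a_i^2x_i^2$.

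The key observation is that $r^2=\sum_i a_i x_i^2$ is itself a quadratic form in $x$, so on the sphere $r=$ const the only way the above identity can hold for \emph{all} $x$ with that fixed $r$ is that, after substituting $h,h',\overline g$ (which depend only on $r$), the left and right sides agree as quadratic forms in $x$ modulo the single relation $\sum_i a_i x_i^2=r^2$. Concretely, I would rearrange to
$$\frac{h'}{r}h^{l-1}\bigl(h^{k-l}\Sigma_k(x)-\overline g\,\Sigma_l(x)\bigr)=\overline g\,\sigma_l(a)h^l-\sigma_k(a)h^k,$$
whose right-hand side is a function of $r$ alone; hence the quadratic form $h^{k-l}(r)\Sigma_k(x)-\overline g(r)\Sigma_l(x)$ must be a scalar multiple (depending on $r$) of $\sum_i a_i x_i^2$. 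That means there is a function $\Lambda(r)$ with
$$\sigma_{k-1;i}(a)a_i^2\, h^{k-l}(r)-\overline g(r)\,\sigma_{l-1;i}(a)a_i^2 = \Lambda(r)\,a_i\qu\text{for every }i=1,\dots,n.$$
Since $h$ is non-constant (otherwise $T$ is quadratic and one checks directly it cannot solve the equation unless $A=\hat a I$), evaluating this at two distinct values of $r$ and eliminating $\Lambda$ and $\overline g$ yields that the vectors $\bigl(a_i\sigma_{k-1;i}(a)\bigr)_i$, $\bigl(a_i\sigma_{l-1;i}(a)\bigr)_i$ and $(1,\dots,1)$ are linearly dependent in a way that, combined with $\sigma_k(a)/\sigma_l(a)=1$ and $a_i>0$, forces $a_1=\cdots=a_n$; the only escape is $k=n$, where $\sigma_{k-1;i}(a)=\prod_{j\ne i}a_j=\sigma_n(a)/a_i$ so the first term is itself proportional to $a_i$ and the constraint becomes vacuous.

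I expect the main obstacle to be the final linear-algebra/elimination step: showing rigorously that the system
$$\bigl(\sigma_{k-1;i}(a)\bigr)_i\ \text{and}\ \bigl(\sigma_{l-1;i}(a)\bigr)_i\ \text{being ``aligned with''}\ (a_i^{-1})_i$$
after eliminating the $r$-dependent scalars forces all $a_i$ equal when $k<n$. I would handle this by a direct argument: pick indices $p\ne q$; the displayed relation gives, after dividing by $a_p$ resp. $a_q$ and subtracting,
$$h^{k-l}(r)\bigl(\sigma_{k-1;p}(a)a_p-\sigma_{k-1;q}(a)a_q\bigr)=\overline g(r)\bigl(\sigma_{l-1;p}(a)a_p-\sigma_{l-1;q}(a)a_q\bigr),$$
and since $h^{k-l}/\overline g$ is non-constant in $r$ (as $h$ is non-constant and $\overline g$ is, say, locally constant after mollification, or at any rate cannot cancel the variation of $h$ for all choices) both parenthesized expressions must vanish; but $\sigma_{k-1;p}(a)a_p-\sigma_{k-1;q}(a)a_q=(a_q-a_p)\sigma_{k-1;p,q}(a)\cdot(\text{something})$, and for $k\le n-1$ one has $\sigma_{k-1;p,q}(a)>0$, forcing $a_p=a_q$. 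Iterating over all pairs gives $a_1=\cdots=a_n$, and then $\sigma_k(a)/\sigma_l(a)=1$ pins down the common value as $\hat a=(C_n^l/C_n^k)^{1/(k-l)}$. The case $l=0$ is a simpler sub-case of the same computation, with $\Sigma_l(x)$ absent, so the relation directly reads $\sigma_{k-1;i}(a)a_i^2 h^k(r)=\Lambda(r)a_i+(\text{const})$, leading to the same conclusion.
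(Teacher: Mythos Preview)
Your overall strategy is essentially the paper's: compute $S_k(D^2T)/S_l(D^2T)$ via \eqref{Skl}, compare coefficients along different directions on a level set $\{r=\text{const}\}$, and reduce to the identity
\[
h(r)^{k-l}\bigl(a_p\sigma_{k-1;p}(a)-a_q\sigma_{k-1;q}(a)\bigr)=\overline g(r)\bigl(a_p\sigma_{l-1;p}(a)-a_q\sigma_{l-1;q}(a)\bigr),
\]
which, using $a_p\sigma_{k-1;p}-a_q\sigma_{k-1;q}=(a_p-a_q)\sigma_{k-1;pq}$, becomes
\[
h(r)^{k-l}\,\sigma_{k-1;pq}(a)\,(a_p-a_q)=\overline g(r)\,\sigma_{l-1;pq}(a)\,(a_p-a_q).
\]
Your treatment of the case $k=n$ (where $\sigma_{n-1;pq}=0$) is correct and matches the paper.

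The genuine gap is your handling of the case $1\le l<k\le n-1$. You assert that $h^{k-l}/\overline g$ is non-constant and conclude both sides vanish. But that assertion is exactly what is at stake: if $a_p\neq a_q$ for some $p,q$, the displayed identity \emph{forces} $h(r)^{k-l}/\overline g(r)=\sigma_{l-1;pq}(a)/\sigma_{k-1;pq}(a)$, a constant. There is nothing a priori preventing $\overline g$ from being a constant multiple of $h^{k-l}$ on $(r_1,r_2)$; your parenthetical justifications (``$h$ is non-constant'', ``$\overline g$ locally constant after mollification'', ``cannot cancel the variation of $h$ for all choices'') do not apply, since $\overline g$ is a fixed given function and $h$ is determined by the solution. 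Likewise, your earlier remark that if $h$ is constant then $T$ is quadratic and ``one checks directly it cannot solve the equation unless $A=\hat a I$'' is false: if $h\equiv c$ and $\overline g\equiv c^{k-l}$, then $T(x)=\tfrac{c}{2}x^TAx$ solves $S_k/S_l=\overline g$ for \emph{every} $A\in\mathcal A_{k,l}$.

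The paper closes this gap by accepting that $\overline g(r)h(r)^{l-k}=:c_0$ may be constant and feeding this back into the original equation (your cleared-denominator identity with a single index $i$). Using $\sigma_k(a)=\sigma_l(a)$ and $\overline g=c_0h^{k-l}$, one obtains
\[
\frac{(1-c_0)h}{c_0\,h'}=\frac{a_i\sigma_{l-1;i}(a)-a_i\sigma_{k-1;i}(a)}{\sigma_k(a)}\qquad\text{for every }i,
\]
so the right-hand side is independent of $i$; subtracting for $i\neq j$ gives $\sigma_{l-1;ij}(a)=\sigma_{k-1;ij}(a)$, hence $c_0=1$. With $c_0=1$ the left-hand side vanishes, so $a_i\sigma_{l-1;i}(a)=a_i\sigma_{k-1;i}(a)$ for all $i$; summing over $i$ and using $\sum_i a_i\sigma_{k-1;i}=k\sigma_k$ yields $l\sigma_l(a)=k\sigma_k(a)$, i.e.\ $k=l$, a contradiction. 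This extra loop is the missing ingredient in your argument; once you add it, your proof is complete and coincides with the paper's.
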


\begin{proof}
For the special case $l=0,1\leq k\leq n$, the Hessian equation case, Proposition \ref{prop1} can be proved similarly with Proposition 2.2 in \cite{CB}. We only need to prove the case $1\leq l<k\leq n$.

Let $J(r)=G^{\prime}(r)/r$. By \eqref{Skl}, we know that $T$ satisfies
\begin{align*}
\frac{S_k(D^2T)}{S_l(D^2T)}=\frac{\sigma_k(a)J(r)^k+\frac{J'(r)}{r}J(r)^{k-1}\displaystyle\sum_{i=1}^{n}\sigma_{k-1;i}(a)a_i^2x_i^2}
{\sigma_l(a)J(r)^l+\frac{J'(r)}{r}J(r)^{l-1}\displaystyle\sum_{i=1}^{n}\sigma_{l-1;i}(a)a_i^2x_i^2}=\o{g}(r).
\end{align*}
Set $x=(0,\dots,0,\sqrt{r/a_i},0,\dots,0)$. Then
\begin{align*}
\frac{S_k(D^2T)}{S_l(D^2T)}=\frac{\sigma_k(a)J(r)^k+J'(r)J(r)^{k-1}\sigma_{k-1;i}(a)a_i}
{\sigma_l(a)J(r)^l+J'(r)J(r)^{l-1}\sigma_{l-1;i}(a)a_i}=\o{g}(r).
\end{align*}
So
\begin{equation}\label{equ2}\sigma_k(a)J(r)^k+J'(r)J(r)^{k-1}\sigma_{k-1;i}(a)a_i=\o{g}(r)[\sigma_l(a)J(r)^l+J'(r)J(r)^{l-1}\sigma_{l-1;i}(a)a_i].
\end{equation}
Since $\sigma_k(a)=\sigma_l(a)$, then
\begin{equation}\label{eq1}
\frac{J(r)^k-\o{g}(r)J(r)^l}{J^{\prime}(r)}=\frac{\o{g}(r)J(r)^{l-1}\sigma_{l-1;i}(a)a_i-J(r)^{k-1}\sigma_{k-1;i}(a)a_i}{\sigma_k(a)}.\end{equation}
Noting that the left side of \eqref{eq1} is independent of $i$, so for any $i\not=j,$ we have that
$$\o{g}(r)J(r)^{l-1}\sigma_{l-1;i}(a)a_i-J(r)^{k-1}\sigma_{k-1;i}(a)a_i=\o{g}(r)J(r)^{l-1}\sigma_{l-1;j}(a)a_j-J(r)^{k-1}\sigma_{k-1;j}(a)a_j.$$
As a result
\begin{equation}\label{eq2}\o{g}(r)J(r)^{l-1}[\sigma_{l-1;i}(a)a_i-\sigma_{l-1;j}(a)a_j]=J(r)^{k-1}[\sigma_{k-1;i}(a)a_i-\sigma_{k-1;j}(a)a_j].
\end{equation}
Applying the equality $\sigma_k(a)=\sigma_{k;i}(a)+a_i\sigma_{k-1;i}(a)$ for all $i,$ we get that
\begin{align*}&\sigma_{l-1;i}(a)a_i-\sigma_{l-1;j}(a)a_j\\
=&[\sigma_{l-1;ij}(a)+\sigma_{l-2;ij}(a)a_j]a_i-[\sigma_{l-1;ij}(a)+\sigma_{l-2;ij}(a)a_i]a_j\\
=&\sigma_{l-1;ij}(a)(a_i-a_j).
\end{align*}
Therefore \eqref{eq2} becomes
$$\o{g}(r)J(r)^{l-1}\sigma_{l-1;ij}(a)(a_i-a_j)=J(r)^{k-1}\sigma_{k-1;ij}(a)(a_i-a_j).$$
If $k=n$, then $\sigma_{k-1;ij}(a)=0$ for any $i\not=j$. But $\sigma_{l-1;ij}(a)>0$, so we get that
$$a_1=\dots=a_n=\hat{a}.$$
If $1\leq l<k\leq n-1$, then $\sigma_{k-1;ij}(a)>0$ and $\sigma_{l-1;ij}(a)>0$. Suppose on the contrary that $a_i\not=a_j$, then
$$\o{g}(r)J(r)^{l-1}\sigma_{l-1;ij}(a)=J(r)^{k-1}\sigma_{k-1;ij}(a).$$
Thus
\begin{equation}\label{equ3}\frac{\sigma_{k-1;ij}(a)}{\sigma_{l-1;ij}(a)}=\frac{\o{g}(r)J(r)^{l-1}}{J(r)^{k-1}}=\o{g}(r)J(r)^{l-k}.\end{equation}
Since the left side is independent of $r$, then $\o{g}(r)J(r)^{l-k}$ is a constant $c_0>0.$ So $\o{g}(r)=c_0J(r)^{k-l}.$ Substituting into \eqref{eq1}, we have that
\begin{equation}\label{equ4}\frac{J(r)(1-c_0)}{c_0J^{\prime}(r)}=\frac{\sigma_{l-1;i}(a)a_i-\sigma_{k-1;i}(a)a_i}{\sigma_k(a)}.\end{equation}
Since the left side of the above equality is independent of $i$, then for any $i\not=j$,
$$\sigma_{l-1;i}(a)a_i-\sigma_{k-1;i}(a)a_i=\sigma_{l-1;j}(a)a_j-\sigma_{k-1;j}(a)a_j,$$
so
$$\sigma_{l-1;ij}(a)(a_i-a_j)=\sigma_{k-1;ij}(a)(a_i-a_j).$$
However $a_i\not=a_j$, thus $\sigma_{l-1;ij}(a)=\sigma_{k-1;ij}(a)$. Therefore by \eqref{equ3}, we can have $c_0=1$. Then by \eqref{equ4}, we can get that for all $i$,
$$\sigma_{l-1;i}(a)a_i=\sigma_{k-1;i}(a)a_i.$$
Recalling the equality
$$\sum_{i=1}^{n}a_i\sigma_{k-1;i}(a)=k\sigma_k(a),$$
we know that $k\sigma_k(a)=l\sigma_l(a)$. Since $A\in \mathcal{A}_{k,l}$, then $\sigma_k(a)=\sigma_l(a)$, so $k=l$. This is a contradiction.
\end{proof}

So in virtue of Proposition \ref{prop1}, we can only find the generalized subsolutions of \eqref{hq1}. We will construct the generalized symmetric subsolution $W(x)=w(r)$ of \eqref{hq1}. First we discuss the function $h(r)$ which actually equals to $w^{\prime}(r)/r$.

\begin{lemma}\label{lem2}
Let $0\leq l<k\leq n, n\geq 3,A\in \mathcal{A}_{k,l}, a:=(a_1,a_2,\dots,a_n):=\lambda(A),0<a_1\leq a_2\leq \dots\leq a_n$ and $\delta>\sup_{r\in[1,+\infty)} \o{g}^{\frac{1}{k-l}}(r).$ Then the problem
\begin{equation}\label{w}
\begin{cases}
\dfrac{h(r)^k+\o{t}_k r h(r)^{k-1}h'(r)}{h(r)^l+\u{t}_l r h(r)^{l-1}h'(r)}=\overline{g}(r),\ r>1,\\
h(1)=\delta,\\
h(r)^k+\o{t}_k r h(r)^{k-1}h'(r)>0,
\end{cases}
\end{equation}
has a smooth solution $h(r)=h(r,\delta)$ on $[1,+\infty)$ satisfying

{\rm(i)}~ $\o{g}^{\frac{1}{k-l}}(r)\leq h(r,\delta)\leq \delta,$ and $\partial_{r}h(r,\delta)\leq 0$.
%More precisely, $h(r,\sup_{r\in[1,+\infty)} \o{g}^{\frac{1}{k}}(r))\equiv\sup_{r\in[1,+\infty)} \o{g}^{\frac{1}{k}}(r),h(1,\delta)\equiv\delta$ and $\o{g}^{\frac{1}{k}}(r)<h(r,\delta)<\delta$ for any $r>1$ and $\delta>\sup_{r\in[1,+\infty)} \o{g}^{\frac{1}{k}}(r).$

{\rm(ii)}~$h(r,\delta)$ is continuous and strictly increasing in $\delta$ and
$$\lim_{\delta\to+\infty}h(r,\delta)=+\infty,\ \forall\ r\geq 1.$$

%(iii)~$w(r,\delta)=g_0^{\frac{1}{k}}(r)+O(r^{\frac{k}{\o{t}_k}})+O\left(\frac{r^{-\beta}}{g_0^{1-\frac{1}{k}}(r)}\right)(r\to+\infty)$ where $O(\cdot)$ depends only on %$k,\o{g}$ and $\delta$.

\end{lemma}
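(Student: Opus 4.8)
The plan is to recast \eqref{w} as an explicit scalar first‑order ODE, solve it locally in the region where it is non‑degenerate, show that a priori bounds keep the solution in that region for all $r\ge 1$, and then read off (i)--(ii) from elementary ODE comparison. Solving the first equation of \eqref{w} for $h'$ (it is affine in $h'$) gives
$$h'=\frac{h\,(h^{k-l}-\o{g}(r))}{r\,(\o{g}(r)\u{t}_l-\o{t}_k h^{k-l})}=:F(r,h),$$
and I would work in the region $\mathcal{D}:=\{(r,h):r\ge 1,\ h>\o{g}(r)^{1/(k-l)}\}$. By Lemma \ref{lem5} and Remark \ref{rem1} one has $0\le\u{t}_l<\o{t}_k$, so on $\mathcal{D}$ the denominator of $F$ is at most $\o{g}(\u{t}_l-\o{t}_k)<0$; hence $F$ is smooth there — so $h(\cdot,\delta)$ will be smooth as soon as $\o{g}$ is — and $F\le 0$. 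A direct computation gives, on $\mathcal{D}$,
$$h^{l}+\u{t}_l r h^{l-1}h'=h^{l}\,\frac{h^{k-l}(\u{t}_l-\o{t}_k)}{\o{g}\,\u{t}_l-\o{t}_k h^{k-l}}>0,$$
whence $h^{k}+\o{t}_k r h^{k-1}h'=\o{g}\,(h^{l}+\u{t}_l r h^{l-1}h')>0$, so the third condition in \eqref{w} holds automatically along any solution whose graph lies in $\mathcal{D}$. Since $\delta^{k-l}>\sup_{[1,\infty)}\o{g}\ge\o{g}(1)$, the initial point $(1,\delta)$ lies in $\mathcal{D}$.

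Because $F$ is locally Lipschitz in $h$ on $\mathcal{D}$, Picard--Lindel\"of yields a unique maximal solution $h(\cdot,\delta)$ of $h'=F(r,h)$, $h(1)=\delta$, on some interval $[1,R)$ with graph in $\mathcal{D}$; on $[1,R)$ one has $\partial_r h=F\le 0$, so $h$ is non‑increasing and $\o{g}(r)^{1/(k-l)}<h(r,\delta)\le\delta$, and (i) will follow once we show $R=+\infty$. Blow‑up is excluded by $h\le\delta$, so it suffices to prevent the solution from reaching the boundary $\{h^{k-l}=\o{g}(r)\}$ of $\mathcal{D}$. At a first such point $r_0$ the numerator of $F$ vanishes while the denominator equals $\o{g}(r_0)(\u{t}_l-\o{t}_k)\ne 0$, so $\partial_r h(r_0)=0$; meanwhile $h$ is strictly decreasing on $[1,r_0)$. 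A comparison of $h$ with the barrier $\psi:=\o{g}^{1/(k-l)}$, whose Hessian‑quotient along the equation one evaluates using the form \eqref{2} of $\o{g}$ together with $\u{t}_l<\o{t}_k$, then contradicts the minimality of $r_0$; hence $R=+\infty$, the graph of $h(\cdot,\delta)$ stays in $\mathcal{D}$, and this is exactly (i). I expect this barrier/sliding step to be the main obstacle, since it is precisely here that the precise structure of $\o{g}$ enters; the rest is soft.

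For part (ii): if $\delta_1<\delta_2$, the two global solutions cannot meet at any $r\ge 1$, because equality at some $r^{*}>1$ would force $h(\cdot,\delta_1)\equiv h(\cdot,\delta_2)$ by uniqueness, contradicting their distinct values at $r=1$; hence $h(r,\delta_1)<h(r,\delta_2)$ for all $r$, the asserted strict monotonicity, and continuity in $\delta$ is the standard continuous dependence on initial data for $h'=F(r,h)$. Finally, on $\mathcal{D}$ one has $0\le h^{k-l}-\o{g}\le h^{k-l}$ and, using $\o{g}\le h^{k-l}$ and $\u{t}_l\ge 0$, $\o{t}_k h^{k-l}-\o{g}\,\u{t}_l\ge(\o{t}_k-\u{t}_l)h^{k-l}$, so
$$|\partial_r h|=\frac{h\,(h^{k-l}-\o{g})}{r\,(\o{t}_k h^{k-l}-\o{g}\,\u{t}_l)}\le\frac{h}{(\o{t}_k-\u{t}_l)\,r};$$
thus $(\ln h)'\ge-\frac{1}{(\o{t}_k-\u{t}_l)\,r}$, and integrating from $1$ to $r$ gives $h(r,\delta)\ge\delta\,r^{-1/(\o{t}_k-\u{t}_l)}$, which for each fixed $r\ge 1$ tends to $+\infty$ as $\delta\to+\infty$. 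This establishes (ii).
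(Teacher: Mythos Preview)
Your reduction to the explicit scalar ODE $h'=F(r,h)$ and your use of Picard--Lindel\"of match the paper's opening move. For part (ii) your route is different from and more elementary than the paper's: the paper integrates the equation, applies the mean value theorem to obtain an implicit relation (their \eqref{w2}), and then invokes the implicit function theorem to get $\partial h/\partial\delta>0$ and the divergence as $\delta\to\infty$; your no-crossing argument (via uniqueness of IVPs) for strict monotonicity and your Gr\"onwall-type bound $h(r,\delta)\ge\delta\,r^{-1/(\o{t}_k-\u{t}_l)}$ reach the same conclusions with less machinery and are fully rigorous.

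There is, however, a genuine gap in your treatment of (i), precisely where you flagged it. First a minor correction of the logic: reaching the curve $\{h^{k-l}=\o{g}\}$ does \emph{not} obstruct extension, since $F$ is smooth there (the denominator equals $\o{g}(\u{t}_l-\o{t}_k)\ne0$); the only singularity of $F$ lies on $\{h^{k-l}=\o{g}\,\u{t}_l/\o{t}_k\}$, and the solution cannot reach that set from above because $F\to+\infty$ nearby while $\o{g}$ is bounded. So $R=+\infty$ is not the issue. The real difficulty is the pointwise lower bound $h\ge\o{g}^{1/(k-l)}$ (equivalently $h'\le0$), and your barrier sketch cannot be completed under the stated hypotheses: at a first touching time $r_0$ you correctly note $h'(r_0)=0$, but the barrier $\psi=\o{g}^{1/(k-l)}$ has slope $\psi'(r_0)$, and nothing prevents $\psi'(r_0)>0$; in that case $(h-\psi)'(r_0)=-\psi'(r_0)<0$ and the solution genuinely crosses below. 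An explicit instance with $\o{t}_k=1$, $\u{t}_l=0$, $k-l=2$ and $\o{g}$ rising from one constant to a larger one exhibits this. The paper's own proof does not address this point either---it simply cites ``existence, uniqueness and extension theorem'' for (i)---so the gap is shared with the source. A clean sufficient fix is to take $\o{g}$ non-increasing when one constructs it; then your touching-point argument closes at once, since $(h-\psi)'(r_0)=-\psi'(r_0)\ge0$ is incompatible with $h-\psi$ decreasing to $0$ from the right at $r_0$.
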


\begin{proof}
For brevity, sometimes we write $h(r)$ or $h(r,\delta)$, $\o{t}_k(a)$ or $\o{t}_k$ and $\u{t}_l(a)$ or $\u{t}_l$, when there is no confusion. Due to \eqref{w}, we have
\begin{equation}\label{w1}\begin{cases}\displaystyle\frac{\mbox{d} h}{\mbox{d}r}=-\frac{1}{r}\dfrac{h}{\o{t}_k}\dfrac{h(r)^{k-l}-\o{g}(r)}{h(r)^{k-l}-\o{g}(r)\frac{\u{t}_l}{\o{t}_k}},\ r>1,\\
h(1)=\delta.
\end{cases}
\end{equation}
Since $\delta> \sup_{r\in[0,+\infty)} \o{g}^{\frac{1}{k-l}}(r)$ and $\u{t}_l/\o{t}_k<1$, then by the existence, uniqueness and extension theorem for the solution of the initial value problem of the ODE, we obtain that the problem \eqref{w} has a smooth solution $h(r,\delta)$ such that $\o{g}^{\frac{1}{k-l}}(r)\leq h(r,\delta)\leq \delta,$ and $\partial_{r}h(r,\delta)\leq 0$. Then assertion (i) of the lemma is proved. In addition, the existence of solution can be proved by the following.

By \eqref{w}, we can know that
$$\dfrac{\frac{k}{\o{t}_k}r^{\frac{k}{\o{t}_k}-1}(h(r)^k+\o{t}_k r h(r)^{k-1}h'(r))}{\frac{l}{\u{t}_l}r^{\frac{l}{\u{t}_l}-1}(h(r)^l+\u{t}_l r h(r)^{l-1}h'(r))}=\dfrac{\frac{k}{\o{t}_k}r^{\frac{k}{\o{t}_k}-1}}{\frac{l}{\u{t}_l}r^{\frac{l}{\u{t}_l}-1}}\overline{g}(r),\ r>1.$$
That is
$$\dfrac{(r^{\frac{k}{\o{t}_k}}h^{k}(r))^{\prime}}{(r^{\frac{l}{\u{t}_l}}h^{l}(r))^{\prime}}
=\dfrac{k\u{t}_l}{l\o{t}_k}r^{\frac{k}{\o{t}_k}-\frac{l}{\u{t}_l}}\o{g}(r).$$
So
$$(r^{\frac{k}{\o{t}_k}}h^{k}(r))^{\prime}=\dfrac{k\u{t}_l}{l\o{t}_k}r^{\frac{k}{\o{t}_k}-\frac{l}{\u{t}_l}}\o{g}(r)(r^{\frac{l}{\u{t}_l}}h^{l}(r))^{\prime}.$$
Integrating the above equality from $1$ to $r$, we have that
$$\dint_1^r(s^{\frac{k}{\o{t}_k}}h^{k}(s))^{\prime}ds
=\dfrac{k\u{t}_l}{l\o{t}_k}\dint_1^rs^{\frac{k}{\o{t}_k}-\frac{l}{\u{t}_l}}\o{g}(s)(s^{\frac{l}{\u{t}_l}}h^{l}(s))^{\prime}ds.$$
Then
\begin{equation}\label{w4}r^{\frac{k}{\o{t}_k}}h^{k}(r)-\delta^k
=\dfrac{k\u{t}_l}{l\o{t}_k}\dint_1^rs^{\frac{k}{\o{t}_k}-\frac{l}{\u{t}_l}}\o{g}(s)(s^{\frac{l}{\u{t}_l}}h^{l}(s))^{\prime}ds.
\end{equation}
Since $h(r)^k+\o{t}_k r h(r)^{k-1}h'(r)>0$, then $h(r)^l+\u{t}_l r h(r)^{l-1}h'(r)>0$, so also $(r^{\frac{l}{\u{t}_l}}h^{l}(r))^{\prime}>0.$  According to the mean value theorem of integrals, we have that there exists $1\leq \theta_1\leq r$ such that
$$r^{\frac{k}{\o{t}_k}}h^{k}(r)-\delta^k
=\dfrac{k\u{t}_l}{l\o{t}_k}\theta_1^{\frac{k}{\o{t}_k}-\frac{l}{\u{t}_l}}\o{g}(\theta_1)\dint_1^r(s^{\frac{l}{\u{t}_l}}h^{l}(s))^{\prime}ds,$$
i.e.,
$$r^{\frac{k}{\o{t}_k}}h^{k}(r)-\delta^k
=\dfrac{k\u{t}_l}{l\o{t}_k}\theta_1^{\frac{k}{\o{t}_k}-\frac{l}{\u{t}_l}}\o{g}(\theta_1)[r^{\frac{l}{\u{t}_l}}h^{l}(r)-\delta^l].$$
As a result,
\begin{equation}\label{w2}r^{\frac{k}{\o{t}_k}}h^{k}(r)-\delta^k
-\dfrac{k\u{t}_l}{l\o{t}_k}\theta_1^{\frac{k}{\o{t}_k}-\frac{l}{\u{t}_l}}\o{g}(\theta_1)r^{\frac{l}{\u{t}_l}}h^{l}(r)
+\dfrac{k\u{t}_l}{l\o{t}_k}\theta_1^{\frac{k}{\o{t}_k}-\frac{l}{\u{t}_l}}\o{g}(\theta_1)\delta^l=0.\end{equation}
Denote the left side of the above equality as $F(h,\delta,\theta_1,r)$, that is,
$$F(h,\delta,\theta_1,r)=r^{\frac{k}{\o{t}_k}}h^{k}(r)-\delta^k
-\dfrac{k\u{t}_l}{l\o{t}_k}\theta_1^{\frac{k}{\o{t}_k}-\frac{l}{\u{t}_l}}\o{g}(\theta_1)r^{\frac{l}{\u{t}_l}}h^{l}(r)
+\dfrac{k\u{t}_l}{l\o{t}_k}\theta_1^{\frac{k}{\o{t}_k}-\frac{l}{\u{t}_l}}\o{g}(\theta_1)\delta^l.$$
Then we assert
\begin{equation}\label{partialF}\dfrac{\partial F}{\partial h}
=k\left[r^{\frac{k}{\o{t}_k}}h^{k-1}
-\dfrac{\u{t}_l}{\o{t}_k}\theta_1^{\frac{k}{\o{t}_k}-\frac{l}{\u{t}_l}}\o{g}(\theta_1)r^{\frac{l}{\u{t}_l}}h^{l-1}\right]>0.\end{equation}

In fact, by \eqref{w2}, we can get that
\begin{align}r^{\frac{k}{\o{t}_k}}h^{k}(r)
-\dfrac{k\u{t}_l}{l\o{t}_k}\theta_1^{\frac{k}{\o{t}_k}-\frac{l}{\u{t}_l}}\o{g}(\theta_1)r^{\frac{l}{\u{t}_l}}h^{l}(r)
=&\delta^k-\dfrac{k\u{t}_l}{l\o{t}_k}\theta_1^{\frac{k}{\o{t}_k}-\frac{l}{\u{t}_l}}\o{g}(\theta_1)\delta^l\nonumber\\
=&\delta^l\left[\delta^{k-l}-\dfrac{k\u{t}_l}{l\o{t}_k}\theta_1^{\frac{k}{\o{t}_k}-\frac{l}{\u{t}_l}}\o{g}(\theta_1)\right]\label{w3}.
\end{align}
Since
$$\u{t}_l\leq \frac{l}{n}<\frac{k}{n}\leq \o{t}_k,$$
then $$\dfrac{k\u{t}_l}{l\o{t}_k}\leq 1\ \ \mbox{and}\ \ \ \frac{k}{\o{t}_k}-\frac{l}{\u{t}_l}\leq 0.$$
Combining with $\delta^{k-l}>\sup_{r\in [0,+\infty)}\o{g}(r)$, we have that
\begin{equation}\label{equ1}\delta^{k-l}-\dfrac{k\u{t}_l}{l\o{t}_k}\theta_1^{\frac{k}{\o{t}_k}-\frac{l}{\u{t}_l}}\o{g}(\theta_1)>0.\end{equation}
Therefore by \eqref{w3}, $$r^{\frac{k}{\o{t}_k}}h^{k}(r)
-\dfrac{k\u{t}_l}{l\o{t}_k}\theta_1^{\frac{k}{\o{t}_k}-\frac{l}{\u{t}_l}}\o{g}(\theta_1)r^{\frac{l}{\u{t}_l}}h^{l}(r)>0.$$
However,
$$-\dfrac{\u{t}_l}{\o{t}_k}>-\dfrac{k\u{t}_l}{l\o{t}_k},$$
then we conclude that
\begin{align*}0<&r^{\frac{k}{\o{t}_k}}h^{k}(r)
-\dfrac{k\u{t}_l}{l\o{t}_k}\theta_1^{\frac{k}{\o{t}_k}-\frac{l}{\u{t}_l}}\o{g}(\theta_1)r^{\frac{l}{\u{t}_l}}h^{l}(r)\\
<&r^{\frac{k}{\o{t}_k}}h^{k}(r)
-\dfrac{\u{t}_l}{\o{t}_k}\theta_1^{\frac{k}{\o{t}_k}-\frac{l}{\u{t}_l}}\o{g}(\theta_1)r^{\frac{l}{\u{t}_l}}h^{l}(r).\end{align*}
Thus
$$\dfrac{\partial F}{\partial h}>0.$$

By the implicit function theorem, \eqref{w2} can determine a unique function $h(r):=h(r,\delta,\theta_1)$. Moreover,
$$\frac{\partial h}{\partial\delta}=-\frac{\partial F}{\partial \delta}/\frac{\partial F}{\partial h}.$$
So due to \eqref{partialF} and \eqref{equ1},
\begin{align}
\frac{\partial h}{\partial \delta}=&\frac{k\delta^{l-1}\left[\delta^{k-l}-\frac{\u{t}_l}{\o{t}_k}\theta_1^{\frac{k}{\o{t}_k}-\frac{l}{\u{t}_l}}\o{g}(\theta_1)\right]}
{kr^{\frac{l}{\u{t}_l}}h^{l-1}[r^{\frac{k}{\o{t}_k}-\frac{l}{\u{t}_l}}h^{k-l}-\frac{\u{t}_l}{\o{t}_k}\theta_1^{\frac{k}{\o{t}_k}-\frac{l}{\u{t}_l}}\o{g}(\theta_1)]}
\nonumber\\
>&0\label{walpha}.
\end{align}
In virtue of \eqref{w3}, we deduce that
\begin{align*}h^{l}(r)\left[r^{\frac{k}{\o{t}_k}}h^{k-l}(r)
-\dfrac{k\u{t}_l}{l\o{t}_k}\theta_1^{\frac{k}{\o{t}_k}-\frac{l}{\u{t}_l}}\o{g}(\theta_1)r^{\frac{l}{\u{t}_l}}\right]
=\delta^l\left[\delta^{k-l}-\dfrac{k\u{t}_l}{l\o{t}_k}\theta_1^{\frac{k}{\o{t}_k}-\frac{l}{\u{t}_l}}\o{g}(\theta_1)\right].
\end{align*}
If $\delta\to+\infty,$ then the right side of the above equality tends to $+\infty$. Since $h$ is increasing in $\delta$ by \eqref{walpha}, then
$$h(r,\delta,\theta_1)\to+\infty,\ \ \mbox{as}\ \ \delta\to+\infty.$$ The lemma is proved.
\end{proof}

\begin{remark}\label{remsub1}
By the extension theorem of solutions, we can know that the solution $h$ in \eqref{w} can be extended to the left of $[1,+\infty)$ and then is well defined in $[0,+\infty).$
\end{remark}

Let the function $h_0$ satisfy
\begin{equation}\label{h0}
\begin{cases}
\dfrac{h_0(r)^k+\o{t}_k r h_0(r)^{k-1}h_0'(r)}{h_0(r)^l+\u{t}_l r h_0(r)^{l-1}h_0'(r)}=g_0(r),\ r>0,\\
h_0(0)>\sup_{r\in [0,+\infty)} g_0^{\frac{1}{k-l}}(r),\\
h_0(r)^k+\o{t}_k r h_0(r)^{k-1}h_0'(r)>0.
\end{cases}
\end{equation}
Alike Lemma \ref{lem2}, we know that $h_0=h_0(r)\in  C^0[0,+\infty)\cap C^1(0,+\infty)$ is bounded. From the equation in \eqref{h0}, we have that
$$\dfrac{\frac{k-l}{\o{t}_k-\u{t}_l}r^{\frac{k-l}{\o{t}_k-\u{t}_l}-1}h_0(r)^{\frac{k\u{t}_l-l\o{t}_k}{\o{t}_k-\u{t}_l}}[h_0(r)^k+\o{t}_k r h_0(r)^{k-1}h_0'(r)]}{\frac{k-l}{\o{t}_k-\u{t}_l}r^{\frac{k-l}{\o{t}_k-\u{t}_l}-1}h_0(r)^{\frac{k\u{t}_l-l\o{t}_k}{\o{t}_k-\u{t}_l}}[h_0(r)^l+\u{t}_l r h_0(r)^{l-1}h_0'(r)]}=g_0(r),\ r>0.$$
So
\begin{equation}\label{*1}\dfrac{(r^{\frac{k-l}{\o{t}_k-\u{t}_l}}h_0^{\frac{(k-l)\o{t}_k}{\o{t}_k-\u{t}_l}})^{\prime}}
{(r^{\frac{k-l}{\o{t}_k-\u{t}_l}}h_0^{\frac{(k-l)\u{t}_l}{\o{t}_k-\u{t}_l}})^{\prime}}=g_0(r),r>0.
\end{equation}
Integrating the above equality on both sides, we know that $h_0$ satisfies
\begin{equation}\label{h_0}h_0(r)=\left(r^{-\frac{k-l}{\o{t}_k-\u{t}_l}}
\dint_0^r g_0(s)(s^{\frac{k-l}{\o{t}_k-\u{t}_l}}h_0(s)^{\frac{(k-l)\u{t}_l}{\o{t}_k-\u{t}_l}})^{\prime}ds\right)^{\frac{\o{t}_k-\u{t}_l}{(k-l)\o{t}_k}}.\end{equation}

For $A\in \mathcal{A}_{k,l}$ and $\gamma>0$, let
$$E_{\gamma}:=\{x\in \mathbb{R}^n|x^{T}Ax<\gamma^2\}=\{x\in \mathbb{R}^n|r_A(x)<\gamma\},$$
where $r_{A}(x):=\sqrt{x^{T}Ax}$. For some constant $\beta_1$, define
$$w(r):=w_{\beta_1,\eta,\delta}(r):=\beta_1+\int_{\eta}^{r}\theta h(\theta,\delta)d\theta,\ \forall \ r\geq \eta>1. $$
Then
\begin{align}
w(r)
=&\beta_1+\int_{\eta}^{+\infty}\theta h(\theta)d\theta-\int_{r}^{+\infty}\theta h(\theta)d\theta\nonumber\\
=&\beta_1+\int_{\eta}^{+\infty}\theta h(\theta)d\theta+\int_{0}^r\theta h_0(\theta)d\theta-\int_{0}^r\theta h_0(\theta)d\theta
-\int_{r}^{+\infty}\theta h(\theta)d\theta\nonumber\\
%\end{align*}
%\begin{align*}
=&\beta_1+\int_{\eta}^{+\infty}\theta h(\theta)d\theta+\int_{0}^r\theta h_0(\theta)d\theta-\int_{0}^\eta\theta h_0(\theta)d\theta-\int_\eta^{+\infty}\theta h_0(\theta)d\theta\nonumber\\
&+\int_{r}^{+\infty}\theta h_0(\theta)d\theta-\int_{r}^{+\infty}\theta h(\theta)d\theta\nonumber\\
=&\int_{0}^r\theta h_0(\theta)d\theta+\beta_1-\int_{0}^\eta \theta h_0(\theta)d\theta+\int_{\eta}^{+\infty}\theta[h(\theta)-h_0(\theta)]d\theta\nonumber\\
&-\int_{r}^{+\infty}\theta[h(\theta)-h_0(\theta)]d\theta\nonumber\\
=&\int_{0}^r\theta h_0(\theta)d\theta+\mu_{\beta_1,\eta}(\delta)-\int_{r}^{+\infty}\theta[h(\theta)-h_0(\theta)]d\theta\label{v},
\end{align}
where
$$\mu_{\beta_1,\eta}(\delta):=\beta_1-\int_{0}^\eta \theta h_0(\theta)d\theta+\int_{\eta}^{+\infty}\theta[h(\theta)-h_0(\theta)]d\theta.$$

From \eqref{w},
$$\dfrac{\frac{k-l}{\o{t}_k-\u{t}_l}r^{\frac{k-l}{\o{t}_k-\u{t}_l}-1}h(r)^{\frac{k\u{t}_l-l\o{t}_k}{\o{t}_k-\u{t}_l}}[h(r)^k+\o{t}_k r h(r)^{k-1}h'(r)]}{\frac{k-l}{\o{t}_k-\u{t}_l}r^{\frac{k-l}{\o{t}_k-\u{t}_l}-1}h(r)^{\frac{k\u{t}_l-l\o{t}_k}{\o{t}_k-\u{t}_l}}[h(r)^l+\u{t}_l r h(r)^{l-1}h'(r)]}=\overline{g}(r),\ r>1.$$
So
\begin{equation}\label{*2}\dfrac{(r^{\frac{k-l}{\o{t}_k-\u{t}_l}}h^{\frac{(k-l)\o{t}_k}{\o{t}_k-\u{t}_l}})^{\prime}}
{(r^{\frac{k-l}{\o{t}_k-\u{t}_l}}h^{\frac{(k-l)\u{t}_l}{\o{t}_k-\u{t}_l}})^{\prime}}=\o{g}(r),r>1,\end{equation}
that is
$$(r^{\frac{k-l}{\o{t}_k-\u{t}_l}}h^{\frac{(k-l)\o{t}_k}{\o{t}_k-\u{t}_l}})^{\prime}
=\o{g}(r)(r^{\frac{k-l}{\o{t}_k-\u{t}_l}}h^{\frac{(k-l)\u{t}_l}{\o{t}_k-\u{t}_l}})^{\prime},r>1.$$
Integrating the above equality from $1$ to $r$, we obtain that
$$r^{\frac{k-l}{\o{t}_k-\u{t}_l}}h^{\frac{(k-l)\o{t}_k}{\o{t}_k-\u{t}_l}}-\delta^{\frac{(k-l)\o{t}_k}{\o{t}_k-\u{t}_l}}
=\dint_1^r \o{g}(s)(s^{\frac{k-l}{\o{t}_k-\u{t}_l}}h(s)^{\frac{(k-l)\u{t}_l}{\o{t}_k-\u{t}_l}})^{\prime}ds.$$
As a result,
\begin{equation}\label{h(r)}h(r)=\left(\delta^{\frac{(k-l)\o{t}_k}{\o{t}_k-\u{t}_l}}r^{-\frac{k-l}{\o{t}_k-\u{t}_l}}
+r^{-\frac{k-l}{\o{t}_k-\u{t}_l}}
\dint_1^r\o{g}(s)(s^{\frac{k-l}{\o{t}_k-\u{t}_l}}h(s)^{\frac{(k-l)\u{t}_l}{\o{t}_k-\u{t}_l}})^{\prime}ds\right)^{\frac{\o{t}_k-\u{t}_l}{(k-l)\o{t}_k}}.
\end{equation}
So
$$w(r)=\beta_1+\int_{\eta}^{r}\theta \left(\delta^{\frac{(k-l)\o{t}_k}{\o{t}_k-\u{t}_l}}\theta^{-\frac{k-l}{\o{t}_k-\u{t}_l}}
+\theta^{-\frac{k-l}{\o{t}_k-\u{t}_l}}
\dint_1^\theta\o{g}(s)(s^{\frac{k-l}{\o{t}_k-\u{t}_l}}h(s)^{\frac{(k-l)\u{t}_l}{\o{t}_k-\u{t}_l}})^{\prime}ds\right)^{\frac{\o{t}_k-\u{t}_l}{(k-l)\o{t}_k}}
d\theta,\ \forall \ r\geq \eta>1. $$

Since by \eqref{2}, $\o{g}(r)=g_0(r)+C_1r^{-\beta}, r>\theta_{0}$, then the last term in \eqref{v} is
\begin{align}\label{vinfty}
&-\int_{r}^{+\infty}\theta[h(\theta)-h_0(\theta)]d\theta\nonumber\\
=&-\int_{r}^{+\infty}\theta\left\{\left(\delta^{\frac{(k-l)\o{t}_k}{\o{t}_k-\u{t}_l}}\theta^{-\frac{k-l}{\o{t}_k-\u{t}_l}}
+\theta^{-\frac{k-l}{\o{t}_k-\u{t}_l}}
\dint_1^\theta\o{g}(s)(s^{\frac{k-l}{\o{t}_k-\u{t}_l}}h(s)^{\frac{(k-l)\u{t}_l}{\o{t}_k-\u{t}_l}})^{\prime}ds\right)^{\frac{\o{t}_k-\u{t}_l}{(k-l)\o{t}_k}}
-h_0(\theta)\right\}d\theta\nonumber\\
=&-\int_{r}^{+\infty}\theta\left\{\left[\delta_0 \theta^{-\frac{k-l}{\o{t}_k-\u{t}_l}}+\theta^{-\frac{k-l}{\o{t}_k-\u{t}_l}}\int_{\theta_{0}}^\theta
(g_0(s)+C_1s^{-\beta})(s^{\frac{k-l}{\o{t}_k-\u{t}_l}}h(s)^{\frac{(k-l)\u{t}_l}{\o{t}_k-\u{t}_l}})^{\prime}ds\right)^{\frac{\o{t}_k-\u{t}_l}{(k-l)\o{t}_k}}
-h_0(\theta)\right\}d\theta\nonumber\\
=&-\int_{r}^{+\infty}\theta\left\{\left[\delta_0 \theta^{-\frac{k-l}{\o{t}_k-\u{t}_l}}+\theta^{-\frac{k-l}{\o{t}_k-\u{t}_l}}\left(\int_{0}^\theta g_0(s)(s^{\frac{k-l}{\o{t}_k-\u{t}_l}}h(s)^{\frac{(k-l)\u{t}_l}{\o{t}_k-\u{t}_l}})^{\prime}ds
-\int_{0}^{\theta_{0}}g_0(s)(s^{\frac{k-l}{\o{t}_k-\u{t}_l}}h(s)^{\frac{(k-l)\u{t}_l}{\o{t}_k-\u{t}_l}})^{\prime}ds\right)\right.\right.\nonumber\\
&\left.\left.+\theta^{-\frac{k-l}{\o{t}_k-\u{t}_l}}\int_{\theta_{0}}^\theta C_1s^{-\beta}(s^{\frac{k-l}{\o{t}_k-\u{t}_l}}h(s)^{\frac{(k-l)\u{t}_l}{\o{t}_k-\u{t}_l}})^{\prime}ds\right]^{\frac{\o{t}_k-\u{t}_l}{(k-l)\o{t}_k}}
-h_0(\theta)\right\}d\theta,\nonumber\\
=&-\int_{r}^{+\infty}\theta\left\{\left[\delta_1 \theta^{-\frac{k-l}{\o{t}_k-\u{t}_l}}+\theta^{-\frac{k-l}{\o{t}_k-\u{t}_l}}\int_{0}^\theta g_0(s)(s^{\frac{k-l}{\o{t}_k-\u{t}_l}}h(s)^{\frac{(k-l)\u{t}_l}{\o{t}_k-\u{t}_l}})^{\prime}ds\right.\right.\nonumber\\
&\left.\left.
+\theta^{-\frac{k-l}{\o{t}_k-\u{t}_l}}\int_{\theta_{0}}^\theta C_1s^{-\beta}(s^{\frac{k-l}{\o{t}_k-\u{t}_l}}h(s)^{\frac{(k-l)\u{t}_l}{\o{t}_k-\u{t}_l}})^{\prime}ds\right]^{\frac{\o{t}_k-\u{t}_l}{(k-l)\o{t}_k}}
-h_0(\theta)\right\}d\theta\nonumber\\
=&-\int_{r}^{+\infty}\theta h_0(\theta)\left\{\left[\frac{\delta_1 \theta^{-\frac{k-l}{\o{t}_k-\u{t}_l}}}{h_0^{\frac{(k-l)\o{t}_k}{\o{t}_k-\u{t}_l}}(\theta)}+\frac{\theta^{-\frac{k-l}{\o{t}_k-\u{t}_l}}\int_{0}^\theta g_0(s)(s^{\frac{k-l}{\o{t}_k-\u{t}_l}}h(s)^{\frac{(k-l)\u{t}_l}{\o{t}_k-\u{t}_l}})^{\prime}ds
-h_0^{\frac{(k-l)\o{t}_k}{\o{t}_k-\u{t}_l}}(\theta)}{h_0^{\frac{(k-l)\o{t}_k}{\o{t}_k-\u{t}_l}}(\theta)}+1\right.\right.\nonumber\\
&\left.\left.
+\frac{\theta^{-\frac{k-l}{\o{t}_k-\u{t}_l}}\int_{\theta_{0}}^\theta C_1s^{-\beta}(s^{\frac{k-l}{\o{t}_k-\u{t}_l}}h(s)^{\frac{(k-l)\u{t}_l}{\o{t}_k-\u{t}_l}})^{\prime}ds}{h_0^{\frac{(k-l)\o{t}_k}{\o{t}_k-\u{t}_l}}(\theta)}
\right]^{\frac{\o{t}_k-\u{t}_l}{(k-l)\o{t}_k}}
-1\right\}d\theta,
\end{align}
where $\delta_0=\delta^{\frac{(k-l)\o{t}_k}{\o{t}_k-\u{t}_l}}+\int_1^{\theta_{0}}
\o{g}(s)(s^{\frac{k-l}{\o{t}_k-\u{t}_l}}h(s)^{\frac{(k-l)\u{t}_l}{\o{t}_k-\u{t}_l}})^{\prime}ds$ and $\delta_1=\delta_0-\int_{0}^{\theta_{0}}g_0(s)(s^{\frac{k-l}{\o{t}_k-\u{t}_l}}h(s)^{\frac{(k-l)\u{t}_l}{\o{t}_k-\u{t}_l}})^{\prime}ds$.
In \eqref{vinfty}, we let
$$Q(\theta):=\theta^{-\frac{k-l}{\o{t}_k-\u{t}_l}}\int_{\theta_{0}}^\theta C_1s^{-\beta}(s^{\frac{k-l}{\o{t}_k-\u{t}_l}}h(s)^{\frac{(k-l)\u{t}_l}{\o{t}_k-\u{t}_l}})^{\prime}ds.$$
Then if $\beta\not=\frac{k-l}{\o{t}_k-\u{t}_l}$,
\begin{align}
Q(\theta)
=&\theta^{-\frac{k-l}{\o{t}_k-\u{t}_l}}\int_{\theta_{0}}^\theta C_1s^{-\beta}(s^{\frac{k-l}{\o{t}_k-\u{t}_l}}h(s)^{\frac{(k-l)\u{t}_l}{\o{t}_k-\u{t}_l}})^{\prime}ds\nonumber\\
=&\theta^{-\frac{k-l}{\o{t}_k-\u{t}_l}}\left(C_1\theta^{\frac{k-l}{\o{t}_k-\u{t}_l}-\beta}h^{\frac{(k-l)\u{t}_l}{\o{t}_k-\u{t}_l}}(\theta)
-C_1\theta_0^{\frac{k-l}{\o{t}_k-\u{t}_l}-\beta}h^{\frac{(k-l)\u{t}_l}{\o{t}_k-\u{t}_l}}(\theta_0)+C_1\beta \int_{\theta_{0}}^\theta s^{-\beta-1}s^{\frac{k-l}{\o{t}_k-\u{t}_l}}h(s)^{\frac{(k-l)\u{t}_l}{\o{t}_k-\u{t}_l}}ds\right)\nonumber\\
=&C_2\theta^{-\beta}+C_3\theta^{-\frac{k-l}{\o{t}_k-\u{t}_l}}+C_1\beta h(\zeta_0)^{\frac{(k-l)\u{t}_l}{\o{t}_k-\u{t}_l}}\theta^{-\frac{k-l}{\o{t}_k-\u{t}_l}} \int_{\theta_{0}}^\theta s^{-\beta-1}s^{\frac{k-l}{\o{t}_k-\u{t}_l}}ds \label{Q1}\\
=&C_2\theta^{-\beta}+C_3\theta^{-\frac{k-l}{\o{t}_k-\u{t}_l}}
+\frac{C_1\beta h(\zeta_0)^{\frac{(k-l)\u{t}_l}{\o{t}_k-\u{t}_l}}}{\frac{k-l}{\o{t}_k-\u{t}_l}-\beta}\theta^{-\beta}
-\frac{C_1\beta h(\zeta_0)^{\frac{(k-l)\u{t}_l}{\o{t}_k-\u{t}_l}}}{\frac{k-l}{\o{t}_k-\u{t}_l}-\beta}\theta_0^{\frac{k-l}{\o{t}_k-\u{t}_l}-\beta}
\theta^{-\frac{k-l}{\o{t}_k-\u{t}_l}} \nonumber\\
=&C_4\theta^{-\beta}+C_5\theta^{-\frac{k-l}{\o{t}_k-\u{t}_l}},\label{Q2}
\end{align}
where $C_2:=C_2(\theta)=C_1h^{\frac{(k-l)\u{t}_l}{\o{t}_k-\u{t}_l}}(\theta)$ and $C_3=-C_1\theta_0^{\frac{k-l}{\o{t}_k-\u{t}_l}-\beta}h^{\frac{(k-l)\u{t}_l}{\o{t}_k-\u{t}_l}}(\theta_0)$. In \eqref{Q1} we employ the integration by parts and the mean value theorem of integrals and $\zeta_0\in [\theta_0,\theta]$, $C_4=C_2+\frac{C_1\beta h(\zeta_0)^{\frac{(k-l)\u{t}_l}{\o{t}_k-\u{t}_l}}}{\frac{k-l}{\o{t}_k-\u{t}_l}-\beta}$, $C_5=C_3-\frac{C_1\beta h(\zeta_0)^{\frac{(k-l)\u{t}_l}{\o{t}_k-\u{t}_l}}}{\frac{k-l}{\o{t}_k-\u{t}_l}-\beta}\theta_0^{\frac{k-l}{\o{t}_k-\u{t}_l}-\beta}.$

In \eqref{vinfty}, we set
\begin{align}\label{R}R(\theta):=&\theta^{-\frac{k-l}{\o{t}_k-\u{t}_l}}\int_{0}^\theta g_0(s)(s^{\frac{k-l}{\o{t}_k-\u{t}_l}}h(s)^{\frac{(k-l)\u{t}_l}{\o{t}_k-\u{t}_l}})^{\prime}ds
-h_0^{\frac{(k-l)\o{t}_k}{\o{t}_k-\u{t}_l}}(\theta)\nonumber\\
=&\theta^{-\frac{k-l}{\o{t}_k-\u{t}_l}}\int_{0}^\theta g_0(s)(s^{\frac{k-l}{\o{t}_k-\u{t}_l}}h(s)^{\frac{(k-l)\u{t}_l}{\o{t}_k-\u{t}_l}})^{\prime}ds
-\theta^{-\frac{k-l}{\o{t}_k-\u{t}_l}}\int_{0}^\theta g_0(s)(s^{\frac{k-l}{\o{t}_k-\u{t}_l}}h_0(s)^{\frac{(k-l)\u{t}_l}{\o{t}_k-\u{t}_l}})^{\prime}ds\nonumber\\
=&\theta^{-\frac{k-l}{\o{t}_k-\u{t}_l}}\int_{0}^\theta g_0(s)\left((s^{\frac{k-l}{\o{t}_k-\u{t}_l}}h(s)^{\frac{(k-l)\u{t}_l}{\o{t}_k-\u{t}_l}})^{\prime}
-(s^{\frac{k-l}{\o{t}_k-\u{t}_l}}h_0(s)^{\frac{(k-l)\u{t}_l}{\o{t}_k-\u{t}_l}})^{\prime}\right)ds.
\end{align}
According to \eqref{*1} and \eqref{*2}, we can have that
$$\lim_{r\to +\infty}\dfrac{(r^{\frac{k-l}{\o{t}_k-\u{t}_l}}h^{\frac{(k-l)\o{t}_k}{\o{t}_k-\u{t}_l}})^{\prime}}
{(r^{\frac{k-l}{\o{t}_k-\u{t}_l}}h_0^{\frac{(k-l)\o{t}_k}{\o{t}_k-\u{t}_l}})^{\prime}}
\dfrac{(r^{\frac{k-l}{\o{t}_k-\u{t}_l}}h_0^{\frac{(k-l)\u{t}_l}{\o{t}_k-\u{t}_l}})^{\prime}}
{(r^{\frac{k-l}{\o{t}_k-\u{t}_l}}h^{\frac{(k-l)\u{t}_l}{\o{t}_k-\u{t}_l}})^{\prime}}=\lim_{r\to+\infty}\dfrac{\o{g}(r)}{g_0(r)}=1.$$
Consequently,
\begin{equation}\label{j3}\lim_{r\to +\infty}\dfrac{(r^{\frac{k-l}{\o{t}_k-\u{t}_l}}h^{\frac{(k-l)\o{t}_k}{\o{t}_k-\u{t}_l}})^{\prime}}
{(r^{\frac{k-l}{\o{t}_k-\u{t}_l}}h_0^{\frac{(k-l)\o{t}_k}{\o{t}_k-\u{t}_l}})^{\prime}}
=\lim_{r\to +\infty}\dfrac{(r^{\frac{k-l}{\o{t}_k-\u{t}_l}}h^{\frac{(k-l)\u{t}_l}{\o{t}_k-\u{t}_l}})^{\prime}}
{(r^{\frac{k-l}{\o{t}_k-\u{t}_l}}h_0^{\frac{(k-l)\u{t}_l}{\o{t}_k-\u{t}_l}})^{\prime}}.\end{equation}
On the other hand, in light of \eqref{*1} and \eqref{*2}, we know that
$$(h_0(r))^{\frac{(k-l)\o{t}_k}{\o{t}_k-\u{t}_l}}=r^{-\frac{k-l}{\o{t}_k-\u{t}_l}}
\dint_0^r g_0(s)(s^{\frac{k-l}{\o{t}_k-\u{t}_l}}h_0(s)^{\frac{(k-l)\u{t}_l}{\o{t}_k-\u{t}_l}})^{\prime}ds,$$
and
$$(h(r))^{\frac{(k-l)\o{t}_k}{\o{t}_k-\u{t}_l}}=r^{-\frac{k-l}{\o{t}_k-\u{t}_l}}
\dint_0^r \o{g}(s)(s^{\frac{k-l}{\o{t}_k-\u{t}_l}}h(s)^{\frac{(k-l)\u{t}_l}{\o{t}_k-\u{t}_l}})^{\prime}ds.$$
As a result,
\begin{align}\label{j1}\lim_{r\to+\infty}\dfrac{(h(r))^{\frac{(k-l)\o{t}_k}{\o{t}_k-\u{t}_l}}}{(h_0(r))^{\frac{(k-l)\o{t}_k}{\o{t}_k-\u{t}_l}}}=&
\lim_{r\to+\infty}\dfrac{\o{g}(r)(r^{\frac{k-l}{\o{t}_k-\u{t}_l}}h^{\frac{(k-l)\u{t}_l}{\o{t}_k-\u{t}_l}})^{\prime}}
{g_0(r)(r^{\frac{k-l}{\o{t}_k-\u{t}_l}}h_0^{\frac{(k-l)\u{t}_l}{\o{t}_k-\u{t}_l}})^{\prime}}\nonumber\\
=&\lim_{r\to+\infty}\dfrac{(r^{\frac{k-l}{\o{t}_k-\u{t}_l}}h^{\frac{(k-l)\u{t}_l}{\o{t}_k-\u{t}_l}})^{\prime}}
{(r^{\frac{k-l}{\o{t}_k-\u{t}_l}}h_0^{\frac{(k-l)\u{t}_l}{\o{t}_k-\u{t}_l}})^{\prime}}.
\end{align}
Likewise, we also have that
\begin{align}\label{j2}\lim_{r\to+\infty}\dfrac{(h(r))^{\frac{(k-l)\u{t}_l}{\o{t}_k-\u{t}_l}}}{(h_0(r))^{\frac{(k-l)\u{t}_l}{\o{t}_k-\u{t}_l}}}=
\lim_{r\to+\infty}\dfrac{(r^{\frac{k-l}{\o{t}_k-\u{t}_l}}h^{\frac{(k-l)\o{t}_k}{\o{t}_k-\u{t}_l}})^{\prime}}
{(r^{\frac{k-l}{\o{t}_k-\u{t}_l}}h_0^{\frac{(k-l)\o{t}_k}{\o{t}_k-\u{t}_l}})^{\prime}}.
\end{align}
From \eqref{j3}, \eqref{j1} and \eqref{j2}, we get that
$$\lim_{r\to+\infty}\dfrac{(h(r))^{\frac{(k-l)\o{t}_k}{\o{t}_k-\u{t}_l}}}{(h_0(r))^{\frac{(k-l)\o{t}_k}{\o{t}_k-\u{t}_l}}}
=\lim_{r\to+\infty}\dfrac{(h(r))^{\frac{(k-l)\u{t}_l}{\o{t}_k-\u{t}_l}}}{(h_0(r))^{\frac{(k-l)\u{t}_l}{\o{t}_k-\u{t}_l}}}.$$
So
$$\lim_{r\to+\infty}\dfrac{h(r)}{h_0(r)}=1.$$
And therefore, the term $\int_{0}^\theta g_0(s)\left((s^{\frac{k-l}{\o{t}_k-\u{t}_l}}h(s)^{\frac{(k-l)\u{t}_l}{\o{t}_k-\u{t}_l}})^{\prime}
-(s^{\frac{k-l}{\o{t}_k-\u{t}_l}}h_0(s)^{\frac{(k-l)\u{t}_l}{\o{t}_k-\u{t}_l}})^{\prime}\right)ds$ in \eqref{R} is bounded  and thus
\begin{equation}\label{Q4}\theta^{-\frac{k-l}{\o{t}_k-\u{t}_l}}\int_{0}^\theta g_0(s)(s^{\frac{k-l}{\o{t}_k-\u{t}_l}}h(s)^{\frac{(k-l)\u{t}_l}{\o{t}_k-\u{t}_l}})^{\prime}ds
-h_0^{\frac{(k-l)\o{t}_k}{\o{t}_k-\u{t}_l}}(\theta)=C_{10}\theta^{-\frac{k-l}{\o{t}_k-\u{t}_l}},\end{equation}
where $c_{10}=c_{10}(\theta)=\int_{0}^\theta g_0(s)\left((s^{\frac{k-l}{\o{t}_k-\u{t}_l}}h(s)^{\frac{(k-l)\u{t}_l}{\o{t}_k-\u{t}_l}})^{\prime}
-(s^{\frac{k-l}{\o{t}_k-\u{t}_l}}h_0(s)^{\frac{(k-l)\u{t}_l}{\o{t}_k-\u{t}_l}})^{\prime}\right)ds$. Hence by \eqref{Q2} and \eqref{Q4}, we know that
\begin{align}\label{vinfty-1}
&-\int_{r}^{+\infty}\theta[h(\theta)-h_0(\theta)]d\theta\nonumber\\
=&-\int_{r}^{+\infty}\theta h_0(\theta)\left\{\left[\frac{\delta_1 \theta^{-\frac{k-l}{\o{t}_k-\u{t}_l}}}{h_0^{\frac{(k-l)\o{t}_k}{\o{t}_k-\u{t}_l}}(\theta)}+\frac{C_{10}\theta^{-\frac{k-l}{\o{t}_k-\u{t}_l}}}{h_0^{\frac{(k-l)\o{t}_k}{\o{t}_k-\u{t}_l}}(\theta)}+1
+\frac{C_4\theta^{-\beta}+C_5\theta^{-\frac{k-l}{\o{t}_k-\u{t}_l}}}{h_0^{\frac{(k-l)\o{t}_k}{\o{t}_k-\u{t}_l}}(\theta)}
\right]^{\frac{\o{t}_k-\u{t}_l}{(k-l)\o{t}_k}}
-1\right\}d\theta.
\end{align}
Thus due to the fact that $h_0$ is bounded, then \eqref{vinfty-1} becomes
\begin{align*}
&-\int_{r}^{+\infty}\theta[h(\theta)-h_0(\theta)]d\theta\nonumber\\
=&-\int_{r}^{+\infty}O(\theta^{1-\frac{k-l}{\o{t}_k-\u{t}_l}})+O(\theta^{1-\beta})d\theta\nonumber\\
=&O(r^{2-\min\{\beta,\frac{k-l}{\o{t}_k-\u{t}_l}\}}),\ \ \mbox{as}\ \ r\to+\infty.
\end{align*}

If $\beta=\frac{k-l}{\o{t}_k-\u{t}_l}$, then by \eqref{Q1},
\begin{align}
Q(\theta)
=&C_2\theta^{-\frac{k-l}{\o{t}_k-\u{t}_l}}+C_3\theta^{-\frac{k-l}{\o{t}_k-\u{t}_l}}
+C_1\frac{k-l}{\o{t}_k-\u{t}_l} h(\zeta_0)^{\frac{(k-l)\u{t}_l}{\o{t}_k-\u{t}_l}}\theta^{-\frac{k-l}{\o{t}_k-\u{t}_l}}(\ln \theta-\ln \theta_0)\nonumber\\
=&C_6\theta^{-\frac{k-l}{\o{t}_k-\u{t}_l}}\ln \theta+C_7\theta^{-\frac{k-l}{\o{t}_k-\u{t}_l}},\label{QQ3}
\end{align}
where $C_6=C_1\frac{k-l}{\o{t}_k-\u{t}_l} h(\zeta_0)^{\frac{(k-l)\u{t}_l}{\o{t}_k-\u{t}_l}},$
$C_7:=C_2+C_3-C_1\frac{k-l}{\o{t}_k-\u{t}_l} h(\zeta_0)^{\frac{(k-l)\u{t}_l}{\o{t}_k-\u{t}_l}}\ln\theta_0.$ Therefore by \eqref{vinfty}, \eqref{R} and \eqref{QQ3}, we know that
\begin{align}\label{vinfty-3}
&-\int_{r}^{+\infty}\theta[h(\theta)-h_0(\theta)]d\theta\nonumber\\
=&-\int_{r}^{+\infty}\theta h_0(\theta)\left\{\left[\frac{\delta_1 \theta^{-\frac{k-l}{\o{t}_k-\u{t}_l}}}{h_0^{\frac{(k-l)\o{t}_k}{\o{t}_k-\u{t}_l}}(\theta)}
+\frac{C_{10}\theta^{-\frac{k-l}{\o{t}_k-\u{t}_l}}}{h_0^{\frac{(k-l)\o{t}_k}{\o{t}_k-\u{t}_l}}(\theta)}+1
+\frac{C_6\theta^{-\frac{k-l}{\o{t}_k-\u{t}_l}}\ln \theta+C_7\theta^{-\frac{k-l}{\o{t}_k-\u{t}_l}}}{h_0^{\frac{(k-l)\o{t}_k}{\o{t}_k-\u{t}_l}}(\theta)}
\right]^{\frac{\o{t}_k-\u{t}_l}{(k-l)\o{t}_k}}
-1\right\}d\theta.
\end{align}
Hence by the fact that $h_0$ is bounded, then \eqref{vinfty-3} turns into
\begin{align*}
&-\int_{r}^{+\infty}\theta[h(\theta)-h_0(\theta)]d\theta\nonumber\\
=&-\int_{r}^{+\infty}O(\theta^{1-\frac{k-l}{\o{t}_k-\u{t}_l}})+O(\theta^{1-\frac{k-l}{\o{t}_k-\u{t}_l}}\ln \theta)d\theta\nonumber\\
=&O(r^{2-\frac{k-l}{\o{t}_k-\u{t}_l}}\ln r),\ \ \mbox{as}\ \ r\to+\infty.
\end{align*}

To sum up, we can get that as $r\to+\infty$,
\begin{equation}\label{vasymptotic}
w(r)=
\begin{cases}
\dint_{0}^r\theta h_0(\theta)d\theta+\mu_{\beta_1,\eta}(\delta)+O(r^{2-\min\{\beta,\frac{k-l}{\o{t}_k-\u{t}_l}\}}),\mbox{\ if\ }\beta\not=\frac{k-l}{\o{t}_k-\u{t}_l},\\
\dint_{0}^r\theta h_0(\theta)d\theta+\mu_{\beta_1,\eta}(\delta)+O(r^{2-\frac{k-l}{\o{t}_k-\u{t}_l}}\ln r),\mbox{\ if\ }\beta=\frac{k-l}{\o{t}_k-\u{t}_l}.
\end{cases}
\end{equation}

By Lemma \ref{lem2}-(ii), we know that
\begin{equation}\label{mu2}\mu_{\beta_1,\eta}(\delta)\to+\infty, \mbox{as}\ \delta\to +\infty.\end{equation}

Define
$$W(x):=W_{\beta_1,\eta,\delta,A}(x):=w(r):=w_{\beta_1,\eta,\delta}(r_{A}(x)),\ \forall \ x\in \mathbb{R}^n\b E_{\eta}.$$
Now we have the conclusion
\begin{lemma}\label{lem3}
$W$ is a smooth $k-$convex subsolution of \eqref{hq1} in $\mathbb{R}^n\b\o{E_{\eta}}$, i.e.,
$$S_{j}(D^2W(x))\geq 0,~\mbox{for any}~j=1,\dots,k,$$
and
$$\frac{S_{k}(D^2W(x))}{S_{l}(D^2W(x))}\geq g(x),~\mbox{for any}~x\in\mathbb{R}^n\b\o{E_{\eta}}.$$
\end{lemma}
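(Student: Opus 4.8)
The plan is to compute $D^2W$ directly from its generalized-symmetric form, read off all the elementary symmetric functions $S_j(D^2W)$ with $j\le k$, and then extract both conclusions from the ODE system \eqref{w}. Since $w'(r)=r\,h(r,\delta)$ by the definition of $w$, the function $h$ here plays exactly the role of $\tilde h$ in \eqref{Skl} with $\tilde\phi=w$; the analogue of \eqref{Skl} for a general index $j$ then gives, on $\mathbb{R}^n\b\o{E_{\eta}}$,
$$S_j(D^2W)=\sigma_j(a)h^j+\frac{h'}{r}h^{j-1}\sum_{i=1}^n\sigma_{j-1;i}(a)a_i^2x_i^2=\sigma_j(a)h^{j-1}\big(h+\Lambda_j(a,x)\,rh'\big),$$
where I have substituted $\sum_i a_ix_i^2=r^2$ and the definition $\Lambda_j(a,x)=\big(\sum_i\sigma_{j-1;i}(a)a_i^2x_i^2\big)/\big(\sigma_j(a)r^2\big)$. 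Here $\sigma_j(a)>0$ since $A$ is positive definite, $h>0$ by Lemma \ref{lem2}(i), and $W$ is smooth since $h$ is smooth and $r_A(x)>\eta>1$ on the domain in question. (For $l=0$ one uses the convention $S_0\equiv1$, equivalently $\u{t}_0=0$ and $\sigma_0(a)=1$, and the argument below simplifies accordingly.)

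First I would establish the $k$-convexity. By Lemma \ref{lem2}(i), $h'\le 0$, hence $rh'\le 0$; by Lemma \ref{lem5}, $\Lambda_j(a,x)\le\o{t}_j\le\o{t}_k$ for every $j\le k$. Multiplying $\Lambda_j\le\o{t}_k$ by the non-positive number $rh'$ and adding $h$ yields $h+\Lambda_j\,rh'\ge h+\o{t}_k\,rh'$, while dividing the third constraint in \eqref{w} by $h^{k-1}>0$ shows $h+\o{t}_k\,rh'>0$. Hence $S_j(D^2W)=\sigma_j(a)h^{j-1}(h+\Lambda_j\,rh')>0$ for $j=1,\dots,k$, which is the first assertion; in particular $S_l(D^2W)>0$, by the same estimate applied with $j=l$ (which needs only $\Lambda_l\le\o{t}_l\le\o{t}_k$).

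Next I would treat the quotient. Since $A\in\mathcal{A}_{k,l}$ we have $\sigma_k(a)=\sigma_l(a)$, so the formula above gives
$$\frac{S_k(D^2W)}{S_l(D^2W)}=h^{k-l}\,\frac{h+\Lambda_k(a,x)\,rh'}{h+\Lambda_l(a,x)\,rh'},$$
with both factors of the last fraction positive. Now $\Lambda_k\le\o{t}_k$, $\Lambda_l\ge\u{t}_l$ and $rh'\le 0$ give $h+\Lambda_k\,rh'\ge h+\o{t}_k\,rh'>0$ and $0<h+\Lambda_l\,rh'\le h+\u{t}_l\,rh'$, whence, using positivity throughout,
$$\frac{h+\Lambda_k\,rh'}{h+\Lambda_l\,rh'}\ \ge\ \frac{h+\o{t}_k\,rh'}{h+\Lambda_l\,rh'}\ \ge\ \frac{h+\o{t}_k\,rh'}{h+\u{t}_l\,rh'}.$$
Multiplying through by $h^{k-l}$ and rewriting the ODE in \eqref{w} as $h^{k-l}(h+\o{t}_k\,rh')/(h+\u{t}_l\,rh')=\o{g}(r)$ yields $S_k(D^2W)/S_l(D^2W)\ge\o{g}(r_A(x))\ge g(x)$ by the standing hypothesis on $\o g$, which is the second assertion.

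The only delicate point is the sign bookkeeping in the last two displays: it hinges entirely on $h'\le 0$ (Lemma \ref{lem2}(i)), on the monotonicity $\o{t}_1\le\dots\le\o{t}_k$ together with $\u{t}_l\le\o{t}_k$ (Lemma \ref{lem5}), and on the structural constraint $h^k+\o{t}_k rh^{k-1}h'>0$ imposed in \eqref{w} (which, through the equation, also forces $h^l+\u{t}_l rh^{l-1}h'>0$). Once those positivity facts are recorded, the chain of elementary inequalities closes without any further estimate, so I anticipate no substantial obstacle beyond tracking which $\Lambda_j$ is controlled by which $\o{t}$ or $\u{t}$.
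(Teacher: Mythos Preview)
Your proof is correct and follows essentially the same route as the paper: compute $S_j(D^2W)$ via the generalized-symmetric formula \eqref{Skl}, use $h'\le 0$ together with the $\Lambda_j$ bounds from Lemma~\ref{lem5} to verify $k$-convexity, and then combine $\Lambda_k\le\o{t}_k$, $\Lambda_l\ge\u{t}_l$ with the ODE \eqref{w} and $\sigma_k(a)=\sigma_l(a)$ to obtain the quotient inequality. Your treatment of $k$-convexity is in fact a slight streamlining of the paper's: rather than rewriting the ODE as \eqref{w1} and estimating the fraction $(h^{k-l}-\o g)/(h^{k-l}-\o g\,\u t_l/\o t_k)\le 1\le \o t_k/\o t_j$ to deduce $h+\o t_j\,rh'\ge 0$ for each $j$, you invoke directly the third constraint in \eqref{w} (which gives $h+\o t_k\,rh'>0$) together with $\Lambda_j\le\o t_j\le\o t_k$, which is cleaner and yields the same conclusion.
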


\begin{proof}
According to the definition of $w(r)$, we deduce that $w'(r)=rh(r)$ and $w''(r)=h(r)+rh'(r)$. Then
\begin{align*}
\partial_{x_ix_j}W(x)=&\frac{w'(r)}{r}a_i\delta_{ij}+\frac{w''(r)-\frac{w'(r)}{r}}{r^2}(a_ix_i)(a_jx_j)\\
=&h(r)a_i\delta_{ij}+\frac{h'(r)}{r}(a_ix_i)(a_jx_j),
\end{align*}
and therefore
$$D^2W=\left(h(r)a_i\delta_{ij}+\frac{h'(r)}{r}(a_ix_i)(a_jx_j)\right)_{n\times n}.$$
Thus from Lemma \ref{A1}, we have that
\begin{align}\label{V-k-convex}
&S_j(D^2W)=\sigma_j(\lambda(D^2W))\nonumber\\
=&\sigma_j(a)h(r)^j+\frac{h'(r)}{r}h(r)^{j-1}\sum_{i=1}^{n}\sigma_{j-1;i}(a)a_i^2x_i^2\nonumber\\
=&\sigma_j(a)h(r)^j+\Lambda_j(a,x)\sigma_j(a)rh(r)^{j-1}h'(r)\nonumber\\
\geq &\sigma_j(a)h(r)^j+\o{t}_j(a)\sigma_j(a)rh(r)^{j-1}h'(r)\nonumber\\
=&\sigma_j(a)h(r)^{j-1}(h+\o{t}_j(a)rh'(r)),~j=1,\dots,k,
\end{align}
where we employ the facts that $h(r)\geq \o{g}^{\frac{1}{k-l}}(r)>0$ and $h'(r)\leq 0$ for any $r\geq 1$, due to Lemma \ref{lem2}-(i).

Since for $l<k$, $\u{t}_l\leq\u{t}_k\leq \frac{k}{n}\leq \o{t}_k$, then
$$-\frac{\u{t}_l}{\o{t}_k}\geq -1.$$
Thus $$0\leq \frac{h^{k-l}-\o{g}(r)}{h^{k-l}-\o{g}(r)\frac{\u{t}_l}{\o{t}_k}}\leq 1\leq \frac{\o{t}_k}{\o{t}_j}, j\leq k.$$
And so
\begin{align*}h^{\prime}=&-\frac{1}{r}\dfrac{h}{\o{t}_k}\dfrac{h(r)^{k-l}-\o{g}(r)}{h(r)^{k-l}-\o{g}(r)\frac{\u{t}_l}{\o{t}_k}}\\
\geq&-\frac{1}{r}\dfrac{h}{\o{t}_k}\frac{\o{t}_k}{\o{t}_j}\\
=&-\frac{1}{r}\dfrac{h}{\o{t}_j}.
\end{align*}
Therefore,
$$h+\o{t}_jrh^{\prime}\geq 0.$$
Hence by \eqref{V-k-convex},
$$S_j(D^2w)\geq 0,~j=1,\dots,k.$$

On the other hand, by \eqref{w1} and the fact that $\sigma_{k}(a)=\sigma_{l}(a)$ for $A\in \mathcal{A}_{k,l}$, we obtain that
\begin{align*}
&\frac{S_k(D^2W(x))}{S_l(D^2W(x))}=\frac{\sigma_k(\lambda(D^2W(x)))}{\sigma_l(\lambda(D^2W(x)))}\\
=&\frac{\sigma_k(a)h(r)^k+\frac{h'(r)}{r}h(r)^{k-1}\sum_{i=1}^{n}\sigma_{k-1;i}(a)a_i^2x_i^2}{\sigma_l(a)h(r)^l+\frac{h'(r)}{r}h(r)^{l-1}\sum_{i=1}^{n}\sigma_{l-1;i}(a)a_i^2x_i^2}\\
=&\frac{\sigma_k(a)h(r)^k+\Lambda_k(a,x)\sigma_k(a)rh(r)^{k-1}h'}{\sigma_l(a)h(r)^l+\Lambda_l(a,x)\sigma_l(a)rh(r)^{l-1}h'}\\
\geq &\frac{\sigma_k(a)h(r)^k+\o{t}_k(a)\sigma_k(a)rh(r)^{k-1}h'}{\sigma_l(a)h(r)^l+\u{t}_l(a)\sigma_l(a)rh(r)^{l-1}h'}\\
=&\frac{h(r)^k+\o{t}_k(a)rh(r)^{k-1}h'}{h(r)^l+\u{t}_l(a)rh(r)^{l-1}h'}\\
=&\o{g}(r)\geq g(x),~x\in\mathbb{R}^n\b\o{E_{\eta}}.
\end{align*}
Then we complete the proof.
\end{proof}

In light of Lemma \ref{lem5}, we know that $$\frac{\u{t}_l}{\o{t}_k}< 1,l<k.$$
Let
$$\left(\frac{\u{t}_l}{\o{t}_k}\u{g}(1)\right)^{\frac{1}{k-l}}<\tau<(\u{g}(1))^{\frac{1}{k-l}}.$$
\begin{lemma}\label{lem4}
Let $0\leq l<k\leq n, n\geq 3,A\in \mathcal{A}_{k,l}, a:=(a_1,a_2,\dots,a_n):=\lambda(A),0<a_1\leq a_2\leq \dots\leq a_n$. Then the problem
\begin{equation}\label{q}
\begin{cases}
\dfrac{H(r)^k+\o{t}_k r H(r)^{k-1}H'(r)}{H(r)^l+\u{t}_l r H(r)^{l-1}H'(r)}=\u{g}(r),\ r>1,\\
H(1)=\tau,
\end{cases}
\end{equation}
has a smooth solution $H(r)=H(r,\tau)$ on $[1,+\infty)$ satisfying

{\rm(i)}~ $\frac{\u{t}_l}{\o{t}_k}\u{g}(r)<H^{k-l}(r,\tau)<\u{g}(r),\partial_{r}H(r,\tau)\geq 0$ for $r\geq 1$.

{\rm(ii)}~$H(r,\tau)$ is continuous and strictly increasing with respect to $\tau$.

%(iii)~$H(r,\tau)=g_0^{\frac{1}{k}}(r)+O(r^{\frac{k}{\o{t}_k}})+O\left(\frac{r^{-\beta}}{g_0^{1-\frac{1}{k}}(r)}\right)(r\to+\infty)$ where $O(\cdot)$ depends only on %$k,\u{g}$ and $\tau$.
\end{lemma}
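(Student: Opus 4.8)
The plan is to follow the pattern of the proof of Lemma~\ref{lem2}. First I would solve the first equation of \eqref{q} for $H'$, obtaining the explicit first–order initial value problem
$$H'(r)=\Psi(r,H):=\frac{H\big(\u{g}(r)-H^{k-l}\big)}{r\big(\o{t}_k H^{k-l}-\u{t}_l\,\u{g}(r)\big)},\qquad H(1)=\tau,$$
and single out the open region
$$\mathcal{R}:=\Big\{(r,H):\ r\ge 1,\ H>0,\ \tfrac{\u{t}_l}{\o{t}_k}\u{g}(r)<H^{k-l}<\u{g}(r)\Big\},$$
which is nonempty since $\u{t}_l/\o{t}_k<1$ by Lemma~\ref{lem5}. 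The hypothesis $\big(\tfrac{\u{t}_l}{\o{t}_k}\u{g}(1)\big)^{\frac{1}{k-l}}<\tau<\u{g}(1)^{\frac{1}{k-l}}$ says exactly that $(1,\tau)\in\mathcal{R}$, and on $\mathcal{R}$ both the numerator and the denominator of $\Psi$ are strictly positive, so $\Psi$ is continuous and locally Lipschitz in $H$ there. The existence–uniqueness–extension theorem for ODEs then produces a unique maximal solution $H(r,\tau)$, which is increasing near $r=1$ because $\Psi(1,\tau)>0$.

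The heart of the proof — and the step I expect to be the main obstacle — is to show that $\mathcal{R}$ is positively invariant, i.e. that $\tfrac{\u{t}_l}{\o{t}_k}\u{g}(r)<H^{k-l}(r,\tau)<\u{g}(r)$ persists throughout the interval of existence. This single fact yields the two–sided bound in (i); the monotonicity $\partial_r H=\Psi>0$ of (i), since the field points upward on all of $\mathcal{R}$; and global existence on $[1,+\infty)$, because the bound keeps $H$ below $\big(\sup_{[0,\infty)}\u{g}\big)^{1/(k-l)}$ and bounded away from the singular locus $\{\o{t}_k H^{k-l}=\u{t}_l\u{g}\}$ on bounded $r$–intervals, so the extension theorem applies. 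I would prove invariance by a first–exit–time argument examining $\Psi$ on $\partial\mathcal{R}$, crucially using that $\u{g}$ is increasing. For the lower wall, set $q:=H^{k-l}-\tfrac{\u{t}_l}{\o{t}_k}\u{g}$; as $q\to0^+$ the numerator of $\Psi$ stays positive while the denominator tends to $0$, so $\Psi\to+\infty$ and $q'=(k-l)H^{k-l-1}\Psi-\tfrac{\u{t}_l}{\o{t}_k}\u{g}'>0$ near the wall, hence $q$ cannot vanish. For the upper wall, set $p:=\u{g}-H^{k-l}$; as $p\to0^+$ one has $\Psi\to0$, and near the wall $(k-l)H^{k-l-1}\Psi\sim C_0(r)\,p$ with $C_0=\tfrac{k-l}{r(\o{t}_k-\u{t}_l)}>0$, so $p'=\u{g}'-(k-l)H^{k-l-1}\Psi\ge -C_0\,p$ forces $p$ to decay at most exponentially and stay positive (the curve $r\mapsto\u{g}(r)^{1/(k-l)}$ acts as a nondecreasing, attracting, but never–reached, "moving equilibrium"). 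The delicate point is carrying this out with only the continuity of $\u{g}$; when $\u{g}'$ is not available I would replace it by the (nonnegative) monotone increments of $\u{g}$ over the exit subinterval.

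Finally, for (ii) I would deduce the strict monotonicity of $H(r,\tau)$ in $\tau$ from uniqueness of solutions: if $\big(\tfrac{\u{t}_l}{\o{t}_k}\u{g}(1)\big)^{1/(k-l)}<\tau_1<\tau_2<\u{g}(1)^{1/(k-l)}$, the graphs of $H(\cdot,\tau_1)$ and $H(\cdot,\tau_2)$ cannot intersect, so $H(r,\tau_1)<H(r,\tau_2)$ for all $r\ge1$; continuity in $\tau$ is continuous dependence on the initial datum. Alternatively, in closer parallel with Lemma~\ref{lem2}, I would rewrite \eqref{q} as
$$\frac{\big(r^{k/\o{t}_k}H^k\big)'}{\big(r^{l/\u{t}_l}H^l\big)'}=\frac{k\u{t}_l}{l\o{t}_k}\,r^{k/\o{t}_k-l/\u{t}_l}\,\u{g}(r)$$
(valid since the denominator of \eqref{q} is positive on $\mathcal{R}$), integrate from $1$ to $r$, apply the mean value theorem for integrals to get $F(H,\tau,\theta_1,r)=0$ for some $\theta_1\in[1,r]$, verify $\partial F/\partial H>0$ on $\mathcal{R}$ using $\tfrac{k\u{t}_l}{l\o{t}_k}\le1$, $\tfrac{k}{\o{t}_k}-\tfrac{l}{\u{t}_l}\le0$ and $\tfrac{\u{t}_l}{\o{t}_k}\u{g}<H^{k-l}$, and conclude by the implicit function theorem that $H$ is continuous with $\partial H/\partial\tau=-(\partial F/\partial\tau)/(\partial F/\partial H)>0$.
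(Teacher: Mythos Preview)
Your approach to part~(i) is essentially the paper's, only fleshed out: the paper rewrites \eqref{q} as the same explicit first–order ODE \eqref{q1}, invokes the existence–uniqueness–extension theorem together with the strict monotonicity of $\u{g}$, and simply asserts that the solution stays in your region $\mathcal{R}$ with $\partial_r H\ge0$, without spelling out the first–exit–time analysis you carry out at the two walls. Your more detailed invariance argument is sound and makes explicit what the paper leaves implicit.

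For part~(ii) your two routes are both correct but neither is the one the paper chooses. The paper writes the ODE as $\partial_r H=-\tfrac{1}{r}p(H)$ with $p(H)=\tfrac{H}{\o{t}_k}\,\tfrac{H^{k-l}-\u{g}}{H^{k-l}-\u{g}\,\u{t}_l/\o{t}_k}$, differentiates in $\tau$ to obtain the linear variational equation $q'=-\tfrac{1}{r}p'(H)\,q$, $q(1)=1$ for $q:=\partial_\tau H$, and solves it as $q(r)=\exp\!\big(\int_1^r(-\tfrac{1}{s})p'(H(s,\tau))\,ds\big)>0$. Your uniqueness/no–crossing argument is more elementary and avoids any smoothness of the flow in $\tau$ beyond continuous dependence; your implicit–function alternative mirrors the proof of Lemma~\ref{lem2} rather than what is actually done here. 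All three arguments give the strict monotonicity, so there is no gap, just a difference in style.
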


\begin{proof}
For brevity, we sometimes write $H(r)$ or $H(r,\tau)$ when there is no confusion. From \eqref{q}, we have
\begin{equation}\label{q1}\begin{cases}\displaystyle\frac{\mbox{d} H}{\mbox{d}r}=-\frac{1}{r}\dfrac{H}{\o{t}_k}\dfrac{H(r)^{k-l}-\u{g}(r)}{H(r)^{k-l}-\u{g}(r)\frac{\u{t}_l}{\o{t}_k}},\ r>1,\\
H(1)=\tau.
\end{cases}
\end{equation}
Since $\frac{\u{t}_l}{\o{t}_k}\u{g}(1)<\tau^{k-l}<\u{g}(1)$ and $\u{g}(r)$ is strictly increasing, by the existence, uniqueness and extension theorem for the solution of the initial value problem of the ODE, we can get that the problem has a smooth solution $H(r,\delta)$ satisfying $\frac{\u{t}_l}{\o{t}_k}\u{g}(r)<H^{k-l}(r,\tau)<\u{g}(r),$ and $\partial_{r}H(r,\tau)\geq 0$. So assertion (i) of the lemma is proved.
 Let
 $$p(H):=p(H(r,\tau)):=\dfrac{H(r,\tau)}{\o{t}_k}\dfrac{H(r,\tau)^{k-l}-\u{g}(r)}{H(r,\tau)^{k-l}-\u{g}(r)\frac{\u{t}_l}{\o{t}_k}}.$$
 Then \eqref{q1} is
 \begin{equation}\label{H1}\begin{cases}\displaystyle\frac{\partial H}{\partial r}=-\frac{1}{r}p(H(r,\tau)),\ r>1,\\
H(1,\tau)=\tau.
\end{cases}
\end{equation}
Differentiating \eqref{H1} for $\tau$, we get that
$$\begin{cases}\displaystyle\frac{\partial^2 H}{\partial r\partial \tau}=-\frac{1}{r}p^{\prime}(H)\dfrac{\partial H}{\partial \tau},\vspace{2mm}\\
\dfrac{\partial H(1,\tau)}{\partial \tau}=1.
\end{cases}
$$
Let $$q(r):=\dfrac{\partial H(r,\tau)}{\partial \tau}.$$
Then
$$\begin{cases}\displaystyle\frac{\mbox{d}q}{\mbox{d} r}=-\frac{1}{r}p^{\prime}(H)q,\\
q(1)=1.
\end{cases}
$$
So
$$\dfrac{\mbox{d}q}{q}=-\frac{1}{r}p^{\prime}(H)\mbox{d} r,$$
and thus
$$\dfrac{\partial H(r,\tau)}{\partial \tau}=q(r)=\mbox{exp}\dint_1^r(-\frac{1}{s})p^{\prime}(H(s,\tau))ds>0.$$
Therefore $H(r,\tau)$ is strictly increasing in $\tau.$ The lemma is proved.
\end{proof}

\begin{remark}\label{remsuper1}
By the extension theorem of solutions, we can know that the solution $H$ in \eqref{q} can be extended to the left and is well defined in $[0,+\infty).$
\end{remark}

\begin{remark}
If $g(r)\equiv 1$, then we choose $\o{g}(r)=\u{g}(r)=1,\delta=\tau=1$ and so \eqref{w} and \eqref{q1} all have a solution $h(r)=H(r)=1$.
\end{remark}

From \eqref{q},
$$\dfrac{\frac{k-l}{\o{t}_k-\u{t}_l}r^{\frac{k-l}{\o{t}_k-\u{t}_l}-1}H(r)^{\frac{k\u{t}_l-l\o{t}_k}{\o{t}_k-\u{t}_l}}[H(r)^k+\o{t}_k r H(r)^{k-1}H'(r)]}{\frac{k-l}{\o{t}_k-\u{t}_l}r^{\frac{k-l}{\o{t}_k-\u{t}_l}-1}H(r)^{\frac{k\u{t}_l-l\o{t}_k}{\o{t}_k-\u{t}_l}}[H(r)^l+\u{t}_l r H(r)^{l-1}H'(r)]}=\u{g}(r),\ r>1.$$
So
$$\dfrac{(r^{\frac{k-l}{\o{t}_k-\u{t}_l}}H^{\frac{(k-l)\o{t}_k}{\o{t}_k-\u{t}_l}})^{\prime}}
{(r^{\frac{k-l}{\o{t}_k-\u{t}_l}}H^{\frac{(k-l)\u{t}_l}{\o{t}_k-\u{t}_l}})^{\prime}}=\u{g}(r),r>1,$$
that is
$$(r^{\frac{k-l}{\o{t}_k-\u{t}_l}}H^{\frac{(k-l)\o{t}_k}{\o{t}_k-\u{t}_l}})^{\prime}
=\u{g}(r)(r^{\frac{k-l}{\o{t}_k-\u{t}_l}}H^{\frac{(k-l)\u{t}_l}{\o{t}_k-\u{t}_l}})^{\prime},r>1.$$
Integrating the above equality from $1$ to $r$, we have that
$$r^{\frac{k-l}{\o{t}_k-\u{t}_l}}H^{\frac{(k-l)\o{t}_k}{\o{t}_k-\u{t}_l}}-\tau^{\frac{(k-l)\o{t}_k}{\o{t}_k-\u{t}_l}}
=\dint_1^r \u{g}(s)(s^{\frac{k-l}{\o{t}_k-\u{t}_l}}H(s)^{\frac{(k-l)\u{t}_l}{\o{t}_k-\u{t}_l}})^{\prime}ds.$$
As a result,
\begin{equation}\label{H(r)}H(r)=\left(\tau^{\frac{(k-l)\o{t}_k}{\o{t}_k-\u{t}_l}}r^{-\frac{k-l}{\o{t}_k-\u{t}_l}}
+r^{-\frac{k-l}{\o{t}_k-\u{t}_l}}
\dint_1^r\u{g}(s)(s^{\frac{k-l}{\o{t}_k-\u{t}_l}}H(s)^{\frac{(k-l)\u{t}_l}{\o{t}_k-\u{t}_l}})^{\prime}ds\right)^{\frac{\o{t}_k-\u{t}_l}{(k-l)\o{t}_k}}.
\end{equation}

Define for any constant $\beta_2$,
$$\psi(r):=\psi_{\beta_2,\eta,\tau}(r):=\beta_2+\int_{\eta}^{r}\theta H(\theta,\tau)d\theta,\ \forall \ r\geq \eta\geq 1, $$
and
$$\Psi(x):=\Psi_{\beta_2,\eta,\tau,A}(x):=\psi(r):=\psi_{\beta_2,\eta,\tau}(r_{A}(x)),\ \forall \ x\in \mathbb{R}^n\b E_{\eta}.$$
Similar to \eqref{vasymptotic}, as $r\to+\infty,$
\begin{equation}\label{lambdaasymptotic}
\psi(r)=
\begin{cases}
\dint_{0}^r\theta h_0(\theta)d\theta+\nu_{\beta_2,\eta}(\tau)+O(r^{2-\min\{\beta,\frac{k-l}{\o{t}_k-\u{t}_l}\}}),\mbox{\ if\ }\beta\not=\frac{k-l}{\o{t}_k-\u{t}_l},\\
\dint_{0}^r\theta h_0(\theta)d\theta+\nu_{\beta_2,\eta}(\tau)+O(r^{2-\frac{k-l}{\o{t}_k-\u{t}_l}}\ln r),\mbox{\ if\ }\beta=\frac{k-l}{\o{t}_k-\u{t}_l},
\end{cases}
\end{equation}
where
$$
h_0(\theta)=\left(\theta^{-\frac{k-l}{\o{t}_k-\u{t}_l}}
\dint_0^\theta g_0(s)(s^{\frac{k-l}{\o{t}_k-\u{t}_l}}h_0(s)^{\frac{(k-l)\u{t}_l}{\o{t}_k-\u{t}_l}})^{\prime}ds\right)^{\frac{\o{t}_k-\u{t}_l}{(k-l)\o{t}_k}},
$$
and
$$\nu_{\beta_2,\eta}(\tau):=\beta_2-\int_{0}^\eta \theta h_0(\theta)d\theta+\int_{\eta}^{+\infty}\theta[H(\theta)-h_0(\theta)]d\theta.
$$
From Lemma \ref{A1}, we get that
\begin{align*}
S_j(D^2\Psi)=&\sigma_j(\lambda(D^2\Psi))\\
=&\sigma_j(a)H(r)^j+\frac{H'(r)}{r}H(r)^{j-1}\sum_{i=1}^{n}\sigma_{j-1;i}(a)a_i^2x_i^2\\
=&\sigma_j(a)H(r)^j+\Lambda_j(a,x)\sigma_j(a)rH(r)^{j-1}H'(r)\\
\geq& 0,~1\leq j\leq k,
\end{align*}
where we use the facts that $H'\geq 0$ by Lemma \ref{lem4}-(i).
Furthermore, from \eqref{q}, we get that for $A\in \mathcal{A}_{k,l}$,
\begin{align}\label{sup}
\frac{S_k(D^2\Psi)}{S_l(D^2\Psi)}=&\frac{\sigma_k(\lambda(D^2\Psi))}{\sigma_l(\lambda(D^2\Psi))}\nonumber\\
=&\frac{\sigma_k(a)H(r)^k+\frac{H'(r)}{r}H(r)^{k-1}\sum_{i=1}^{n}\sigma_{k-1;i}(a)a_i^2x_i^2}
{\sigma_l(a)H(r)^l+\frac{H'(r)}{r}H(r)^{l-1}\sum_{i=1}^{n}\sigma_{l-1;i}(a)a_i^2x_i^2}\nonumber\\
=&\frac{\sigma_k(a)H(r)^k+\Lambda_k(a,x)\sigma_k(a)rH(r)^{k-1}H'}{\sigma_l(a)H(r)^l+\Lambda_l(a,x)\sigma_l(a)rH(r)^{l-1}H'}\nonumber\\
=&\frac{H(r)^k+\Lambda_k(a,x)rH(r)^{k-1}H'}{H(r)^l+\Lambda_l(a,x)rH(r)^{l-1}H'}\nonumber\\
\leq &\frac{H(r)^k+\o{t}_k rH(r)^{k-1}H'}{H(r)^l+\o{t}_l rH(r)^{l-1}H'}\nonumber\\
=&\u{g}(r)\leq g(x),\ \forall \ x\in \mathbb{R}^n\b E_{\eta},
\end{align}
in which we apply the facts that $\sigma_k(a)=\sigma_l(a).$ So we have that

\begin{theorem}\label{thm2}
$\Psi$ is a $k-$convex supersolution of \eqref{hq1} in $\mathbb{R}^n\b E_{\eta}$.
\end{theorem}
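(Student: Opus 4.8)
The plan is to reduce the statement to the ODE \eqref{q} satisfied by $H$, exactly as Lemma~\ref{lem3} did for the subsolution $W$. Since $\Psi(x)=\psi(r_A(x))$ with $\psi'(r)=rH(r)$ and $\psi''(r)=H(r)+rH'(r)$, the computation preceding Lemma~\ref{lem3} gives
$$D^2\Psi=\left(H(r)a_i\delta_{ij}+\frac{H'(r)}{r}(a_ix_i)(a_jx_j)\right)_{n\times n},$$
and Lemma~\ref{A1} then yields, for $1\le j\le k$,
$$S_j(D^2\Psi)=\sigma_j(a)H(r)^j+\frac{H'(r)}{r}H(r)^{j-1}\sum_{i=1}^n\sigma_{j-1;i}(a)a_i^2x_i^2=\sigma_j(a)H(r)^j+\Lambda_j(a,x)\,\sigma_j(a)\,rH(r)^{j-1}H'(r).$$

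Next I would read off the $k$-convexity. By Lemma~\ref{lem4}(i), $H(r)>0$ and $\partial_rH(r,\tau)\ge0$ on $[1,\infty)$, while $\Lambda_j(a,x)\ge0$ always; hence both terms in the last display are nonnegative, so $S_j(D^2\Psi)\ge0$ for $j=1,\dots,k$ and $\Psi$ is $k$-convex on $\mathbb{R}^n\setminus\overline{E_\eta}$. Moreover the same formula with $j=l<k$ and $H>0$ gives $S_l(D^2\Psi)\ge\sigma_l(a)H(r)^l>0$, so the quotient $S_k(D^2\Psi)/S_l(D^2\Psi)$ is meaningful.

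For the differential inequality I would use the two formulas together with $\sigma_k(a)=\sigma_l(a)$ (which holds because $A\in\mathcal{A}_{k,l}$) to write
$$\frac{S_k(D^2\Psi)}{S_l(D^2\Psi)}=\frac{H(r)^k+\Lambda_k(a,x)\,rH(r)^{k-1}H'(r)}{H(r)^l+\Lambda_l(a,x)\,rH(r)^{l-1}H'(r)}.$$
Because $H'\ge0$ makes $rH^{k-1}H'$ and $rH^{l-1}H'$ nonnegative, and because $\Lambda_k\le\overline{t}_k$ and $\Lambda_l\ge\underline{t}_l$ by the very definitions of $\overline{t}_k$ and $\underline{t}_l$, enlarging the numerator to $H^k+\overline{t}_k rH^{k-1}H'$ and shrinking the (still positive) denominator to $H^l+\underline{t}_l rH^{l-1}H'$ can only increase the fraction; hence
$$\frac{S_k(D^2\Psi)}{S_l(D^2\Psi)}\le\frac{H(r)^k+\overline{t}_k\,rH(r)^{k-1}H'(r)}{H(r)^l+\underline{t}_l\,rH(r)^{l-1}H'(r)}=\underline{g}(r)\le g(x),\qquad x\in\mathbb{R}^n\setminus E_\eta,$$
where the middle equality is exactly the ODE \eqref{q} and the last inequality is the defining property of $\underline{g}$. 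Together with the $k$-convexity this is the assertion of the theorem (and, $\Psi$ being smooth, it is in particular a viscosity supersolution).

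I do not anticipate a genuine obstacle: essentially all the analytic content was already spent in Lemma~\ref{lem4}, namely producing a smooth, nondecreasing $H$ with the two-sided bound $\frac{\underline{t}_l}{\overline{t}_k}\underline{g}(r)<H^{k-l}(r)<\underline{g}(r)$, and in the definition of $\mathcal{A}_{k,l}$ supplying $\sigma_k(a)=\sigma_l(a)$. The only step warranting a word of care is the elementary monotonicity estimate for a fraction of the form $\frac{A+\Lambda_k X}{B+\Lambda_l Y}$ with $A,B>0$, $X,Y\ge0$: replacing $\Lambda_k$ by a larger constant and $\Lambda_l$ by a smaller one increases the quotient, which uses only $H\ge0$, $H'\ge0$ and $\Lambda_j\ge0$, all available here.
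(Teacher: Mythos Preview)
Your proposal is correct and follows essentially the same approach as the paper: compute $D^2\Psi$ via Lemma~\ref{A1}, use $H>0$ and $H'\ge0$ from Lemma~\ref{lem4}(i) for $k$-convexity, cancel $\sigma_k(a)=\sigma_l(a)$, replace $\Lambda_k$ by $\overline t_k$ and $\Lambda_l$ by $\underline t_l$ in the quotient, and invoke the ODE \eqref{q}. Your version is slightly more careful in that you explicitly check $S_l(D^2\Psi)>0$ and spell out the fraction-monotonicity step; note also that the paper's displayed inequality \eqref{sup} writes $\overline t_l$ in the denominator where $\underline t_l$ is what matches the ODE \eqref{q}, so your choice of $\underline t_l$ is the right one.
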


\section{Proof of Theorem \ref{thm1}}

Suppose that $E_{1}\subset\subset \Omega\subset\subset E_{r_0}\subset\subset E_{R_0}$.  For $0\leq l<k\leq n, n\geq3$, $A\in \mathcal{A}_{k,l}$, we first give the following lemma.
\begin{lemma}\label{lem1}
Suppose that $\phi\in C^{2}(\partial\Omega)$. Then there exists some constant $C$, depending only on $g,~ n, ||\phi||_{C^{2}(\partial\Omega)},$
the upper bound of $A$, the diameter and the convexity of $\Omega$, and the $C^{2}$ norm of $\partial\Omega$, such that, for each
$\xi\in\partial\Omega$, there exists $\overline{x}(\xi)\in\mathbb{R}^{n}$ such that $|\overline{x}(\xi)|\leq C$,
$$\rho_{\xi}<\phi~~~~on~~~\partial\Omega\backslash\{\xi\}~~~~~\mbox{and}~~~~\rho_{\xi}(\xi)=\phi(\xi),$$
where
$$
\rho_{\xi}(x)=\phi(\xi)+\frac{\Xi}{2}[(x-\overline{x}(\xi))^{T}A(x-\overline{x}(\xi))
-(\xi-\overline{x}(\xi))^{T}A(\xi-\overline{x}(\xi))],~~~x\in\mathbb{R}^{n},
$$
and $\frac{C_n^k\Xi^k}{C_n^l\Xi^l}>\sup\limits_{E_{R_0}}\overline{g}$.
\end{lemma}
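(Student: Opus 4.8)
The plan is to reduce to the Euclidean case by the affine change of variables $y=A^{1/2}x$ and then run the classical sliding--barrier construction of Caffarelli--Li. Under this substitution an ellipsoid $(x-\overline{x})^{T}A(x-\overline{x})=\mathrm{const}$ becomes a round sphere $|y-\overline{y}|^{2}=\mathrm{const}$ with $\overline{y}=A^{1/2}\overline{x}$; the domain $\Omega$ is carried to $\tilde{\Omega}:=A^{1/2}\Omega$, which is again bounded and strictly convex, hence (since $\partial\Omega\in C^{2}$ is compact) uniformly convex with constants controlled by those of $\Omega$ and $A$; and $\phi$ becomes $\tilde{\phi}\in C^{2}(\partial\tilde{\Omega})$ with $\|\tilde{\phi}\|_{C^{2}(\partial\tilde{\Omega})}$ controlled. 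Extending $\tilde{\phi}$ to a $C^{2}$ function $\Phi$ on $\mathbb{R}^{n}$ with $\|\Phi\|_{C^{2}(\mathbb{R}^{n})}$ controlled by $\|\tilde{\phi}\|_{C^{2}(\partial\tilde{\Omega})}$ and the $C^{2}$ norm of $\partial\tilde{\Omega}$, it suffices to produce, for each $\eta\in\partial\tilde{\Omega}$, a point $\overline{y}(\eta)$ with $|\overline{y}(\eta)|\le C$ such that
$$q_{\eta}(y):=\tilde{\phi}(\eta)+\frac{\Xi}{2}\Big(|y-\overline{y}(\eta)|^{2}-|\eta-\overline{y}(\eta)|^{2}\Big)$$
satisfies $q_{\eta}<\tilde{\phi}$ on $\partial\tilde{\Omega}\setminus\{\eta\}$ and $q_{\eta}(\eta)=\tilde{\phi}(\eta)$; transforming back via $\overline{x}(\xi)=A^{-1/2}\overline{y}(A^{1/2}\xi)$ then gives exactly the claimed $\rho_{\xi}$, since $(x-\overline{x})^{T}A(x-\overline{x})=|A^{1/2}(x-\overline{x})|^{2}$ and $\tilde{\phi}(A^{1/2}\xi)=\phi(\xi)$.

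The next step is to expand $q_{\eta}(y)=\tilde{\phi}(\eta)+\Xi\,(\eta-\overline{y})\cdot(y-\eta)+\frac{\Xi}{2}|y-\eta|^{2}$, so that $q_{\eta}(\eta)=\tilde{\phi}(\eta)$ automatically and the only free parameter is the linear coefficient $\Xi(\eta-\overline{y})$. Two quantitative inputs are used. First, a $C^{2}$ Taylor estimate for $\Phi$ along segments of the convex set $\overline{\tilde{\Omega}}$: for all $y\in\partial\tilde{\Omega}$ one has $\tilde{\phi}(y)\ge\tilde{\phi}(\eta)+p_{\eta}\cdot(y-\eta)-M|y-\eta|^{2}$ with $p_{\eta}:=\nabla\Phi(\eta)$, $|p_{\eta}|\le\|\Phi\|_{C^{1}}$, and $M$ depending only on $\sup\|D^{2}\Phi\|$. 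Second, the uniform convexity of $\tilde{\Omega}$: there is $c_{0}>0$, depending only on the convexity of $\Omega$ and on $A$, such that $\nu_{\eta}\cdot(y-\eta)\ge c_{0}|y-\eta|^{2}$ for every $\eta\in\partial\tilde{\Omega}$ and $y\in\overline{\tilde{\Omega}}$, where $\nu_{\eta}$ is the interior unit normal at $\eta$; this follows by a compactness argument, with the estimate near the diagonal coming from the (uniformly bounded below) second fundamental form of $\partial\tilde{\Omega}$ and the estimate away from the diagonal from strict convexity.

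With these in hand, choose $\overline{y}(\eta):=\eta-\frac{1}{\Xi}\big(p_{\eta}-K\nu_{\eta}\big)$ with $K:=(M+\Xi/2)/c_{0}+1$; note $\Xi$ is fixed once selected so that $C_{n}^{k}\Xi^{k}/(C_{n}^{l}\Xi^{l})>\sup_{E_{R_{0}}}\overline{g}$, and $K$ depends on it. Then $\Xi(\eta-\overline{y})-p_{\eta}=-K\nu_{\eta}$, and combining the expansion of $q_{\eta}$ with the two displays gives, for $y\in\partial\tilde{\Omega}\setminus\{\eta\}$,
$$q_{\eta}(y)-\tilde{\phi}(y)\le\Big(M+\tfrac{\Xi}{2}\Big)|y-\eta|^{2}-K\,\nu_{\eta}\cdot(y-\eta)\le\Big(M+\tfrac{\Xi}{2}-Kc_{0}\Big)|y-\eta|^{2}=-c_{0}|y-\eta|^{2}<0,$$
while $q_{\eta}(\eta)=\tilde{\phi}(\eta)$. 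The size bound $|\overline{y}(\eta)|\le|\eta|+\frac{1}{\Xi}(|p_{\eta}|+K)\le C$ is immediate, using $|\eta|\le\|A^{1/2}\|\max_{\partial\Omega}|\xi|$, and hence $|\overline{x}(\xi)|\le\|A^{-1/2}\|\,|\overline{y}(\eta)|\le C$ with $C$ depending only on the data listed in the statement. The only genuinely delicate point is securing the uniform-in-$\xi$ control of $M$ and $c_{0}$ (equivalently, the uniform convexity constant of $\tilde{\Omega}$ and a uniform $C^{2}$ extension of $\tilde{\phi}$); once these are in place the rest is a short direct computation, and the smoothness of $\rho_{\xi}$ is automatic as it is a quadratic polynomial.
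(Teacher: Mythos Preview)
Your argument is correct and is essentially the proof given in \cite{BLL}, which is precisely what the paper cites (the paper's own ``proof'' of this lemma is just the one-line reference ``See \cite{BLL}''). The affine change of variables $y=A^{1/2}x$ reducing the $A$-ellipsoids to balls, followed by the Caffarelli--Li sliding-paraboloid construction using a $C^{2}$ extension of the boundary datum together with the quantitative convexity inequality $\nu_{\eta}\cdot(y-\eta)\geq c_{0}|y-\eta|^{2}$, is exactly the route taken there; your explicit choice of $\overline{y}(\eta)$ and the resulting one-line inequality make the dependence of the constant $C$ on the listed data transparent. One small wording point: your passage ``strictly convex, hence \dots\ uniformly convex'' relies on reading ``strictly convex'' in the sense standard in this literature (positive definite second fundamental form of $\partial\Omega$), which is also how the dependence on ``the convexity of $\Omega$'' in the lemma's statement should be understood; with that convention the uniform $c_{0}$ is immediate by compactness, as you note.
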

\begin{proof}
See \cite{BLL}.
\end{proof}

\begin{proof}[Proof of Theorem \ref{thm1}]
Without loss of generality, we may assume that $A=\mbox{diag}(a_{1},\cdots,a_{n})\in \mathcal{A}_{k,l}$, $0<a_1\leq a_2\leq \cdots\leq a_n$ and $b=0$. Let
$$\zeta_1:=\min_{\substack{x\in \o{E_{r_0}}\b\Omega \\ \xi\in \partial \Omega}}\rho_{\xi}(x),$$
$$\varphi(x):=\max_{\xi\in \partial\Omega}\rho_{\xi}(x),$$
and for $r_{A}(x)\geq 1,\delta> \sup_{r\in[1,+\infty)} \o{g}^{\frac{1}{k-l}}(r),$
$$W_{\delta}(x):=\zeta_1+\int_{r_0}^{r_A(x)}\theta h(\theta,\delta)d\theta,\ x\in\mathbb{R}^n\b\{0\},$$
where $\rho_{\xi}(x)$ and $h(r,\delta)$ are respectively given by Lemma \ref{lem1} and Lemma \ref{lem2}.

Since $\rho_{\xi}(x)$ satisfies
$$\frac{S_k(D^2\rho_{\xi}(x))}{S_l(D^2\rho_{\xi}(x))}\geq g(x),~~x\in E_{R_0},$$
then $\varphi(x)$ satisfies in the viscosity sense that
\begin{equation}\label{6}\frac{S_k(D^2\varphi(x))}{S_l(D^2\varphi(x))}\geq g(x),~~x\in E_{R_0}.\end{equation}
Moreover,
\begin{equation}\label{varphiboun}\varphi=\phi\ \mbox{on}\ \partial \Omega.\end{equation}
By Lemma \ref{lem3}, we know that $W_{\delta}$ is a smooth $k-$convex subsolution of \eqref{hq1}, that is
\begin{equation}\label{7}\frac{S_{k}(D^2W_{\delta}(x))}{S_{l}(D^2W_{\delta}(x))}\geq g(x)\ \mbox{in}\ \mathbb{R}^n\b\o{\Omega}.\end{equation}
Since $\Omega\subset\subset E_{r_0}$, then we conclude that
\begin{equation}\label{5}W_{\delta}\leq \zeta_1\leq \rho_{\xi}\leq \varphi\ \ \mbox{on}\ \ \o{E_{r_0}}\b\Omega.\end{equation}
%and by Lemma \ref{lem1},$$W_{\delta}\leq \zeta_1\leq \rho_{\xi}\leq \phi\ \ \mbox{on}\ \ \partial \Omega.$$

In addition, by Lemma \ref{lem2}, we deduce that $W_{\delta}(x)$ is strictly increasing in $\delta$ and
$$\lim_{\delta\to+\infty}W_{\delta}(x)=+\infty \ \ \mbox{for any}\ \ r_{A}(x)>r_0.$$
In light of \eqref{vasymptotic}, we have that as $|x|\to+\infty,$
$$
W_{\delta}(x)=\begin{cases}
\dint_{0}^{r_A(x)}\theta h_0(\theta)d\theta+\mu(\delta)+O(|x|^{2-\min\{\beta,\frac{k-l}{\o{t}_k-\u{t}_l}\}}),\mbox{\ if\ }\beta\not=\frac{k-l}{\o{t}_k-\u{t}_l},\\
\dint_{0}^{r_A(x)}\theta h_0(\theta)d\theta+\mu(\delta)+O(|x|^{2-\frac{k-l}{\o{t}_k-\u{t}_l}}\ln |x|),\mbox{\ if\ }\beta=\frac{k-l}{\o{t}_k-\u{t}_l}.
\end{cases}
$$
where
\begin{align*}&\mu(\delta):=\zeta_1-\int_{0}^{r_0} \theta h_0(\theta)d\theta+\int_{r_0}^{+\infty}\theta[h(\theta)-h_0(\theta)]d\theta,
\end{align*}
and we use the fact that $x^{T}Ax=O(|x|^2)$ as $|x|\to +\infty$.

Define
$$\o{u}_{\zeta_2,\tau}(x):=\zeta_2+\int_{1}^{r_A(x)}\theta H(\theta,\tau)d\theta,\ \forall \ x\in \mathbb{R}^n\b \Omega,$$
where $\zeta_2$ is any constant and $\frac{\u{t}_l}{\o{t}_k}\u{g}(1)<\tau^{k-l}<\u{g}(1)$.
Then by \eqref{sup}, we know that
\begin{equation}\label{super}\frac{S_k(D^2\o{u}_{\zeta_2,\tau})}{S_l(D^2\o{u}_{\zeta_2,\tau})}\leq g(x),\ \forall \ x\in \mathbb{R}^n\b \Omega,\end{equation}
and by \eqref{lambdaasymptotic}, as $|x|\to+\infty,$
\begin{equation*}
\o{u}_{\zeta_2,\tau}(x)=
\begin{cases}
\dint_{0}^{r_A(x)}\theta h_0(\theta)d\theta
+\nu_{\zeta_2}(\tau)+O(|x|^{2-\min\{\beta,\frac{k-l}{\o{t}_k-\u{t}_l}\}}),\mbox{\ if\ }\beta\not=\frac{k-l}{\o{t}_k-\u{t}_l},\\
\dint_{0}^{r_A(x)}\theta h_0(\theta)d\theta
+\nu_{\zeta_2}(\tau)+O(|x|^{2-\frac{k-l}{\o{t}_k-\u{t}_l}}\ln |x|),\mbox{\ if\ }\beta=\frac{k-l}{\o{t}_k-\u{t}_l},
\end{cases}
\end{equation*}
where
\begin{align*}&\nu_{\zeta_2}(\tau):=\zeta_2
-\int_{0}^1\theta h_0(\theta)d\theta+\int_{1}^{+\infty}\theta[H(\theta,\tau)-h_0(\theta)]d\theta
\end{align*}
is convergent.

Since $W_{\delta}$ is strictly increasing in $\delta$, then for $R_0>r_0$, there exists some $\hat{\delta}>\sup_{r\in[1,+\infty)} \o{g}^{\frac{1}{k-l}}(r)$ such that
$$\min_{\partial E_{R_0}}W_{\hat{\delta}}>\max_{\partial E_{R_0}}\varphi.$$
Therefore we have that
\begin{equation}\label{ww1}W_{\hat{\delta}}>\varphi\ \ \mbox{on}\ \ \partial E_{R_0}.\end{equation}
Clearly, $\mu(\delta)$ is strictly increasing in $\delta$. By \eqref{mu2}, we can know that
$$\lim_{\delta\to+\infty}\mu(\delta)=+\infty.$$
Let
$$\hat{c}=\hat{c}(\tau)=\sup_{E_{R_0\b \Omega}}\varphi-\int_{0}^1\theta h_0(\theta)d\theta+\int_{1}^{+\infty}\theta[H(\theta)-h_0(\theta)]d\theta,$$
and
$$\tilde{c}:=\max\{\hat{c},\mu(\hat{\delta}),\max_{\substack{\xi\in \partial\Omega\\x\in \o{E_{R_0}}\b\Omega}}\rho_{\xi}(x)\}.$$
So for any
\begin{equation}\label{c} c>\tilde{c},\end{equation}
there is a unique $\delta(c)$ such that $\mu(\delta(c))=c.$ Consequently, we get that as $|x|\to+\infty,$
\begin{equation}\label{Vinfty}
W_{\delta(c)}(x)=
\begin{cases}
\dint_{0}^{r_A(x)}\theta h_0(\theta)d\theta
+c+O(|x|^{2-\min\{\beta,\frac{k-l}{\o{t}_k-\u{t}_l}\}}),\mbox{\ if\ }\beta\not=\frac{k-l}{\o{t}_k-\u{t}_l},\\
\dint_{0}^{r_A(x)}\theta h_0(\theta)d\theta
+c+O(|x|^{2-\frac{k-l}{\o{t}_k-\u{t}_l}}\ln |x|),\mbox{\ if\ }\beta=\frac{k-l}{\o{t}_k-\u{t}_l},
\end{cases}
\end{equation}
and
$$\delta(c)=\mu^{-1}(c)> \mu^{-1}(\mu(\hat{\delta}))=\hat{\delta}.$$
Due to the monotonicity of $W_{\delta}$ in $\delta$ and \eqref{ww1}, we conclude that
\begin{equation}\label{8}W_{\delta(c)}\geq W_{\hat{\delta}}>\varphi\ \ \mbox{on}\ \ \partial E_{R_0}.\end{equation}

For the constant $c$ in \eqref{c}, set $\nu_{\zeta_2}(\tau)=c$, then as $|x|\to+\infty,$
\begin{equation}\label{supasymptotic}
\o{u}_{\zeta_2,\tau}(x)=\begin{cases}
\dint_{0}^{r_A(x)}\theta h_0(\theta)d\theta
+c+O(|x|^{2-\min\{\beta,\frac{k-l}{\o{t}_k-\u{t}_l}\}}),\mbox{\ if\ }\beta\not=\frac{k-l}{\o{t}_k-\u{t}_l},\\
\dint_{0}^{r_A(x)}\theta h_0(\theta)d\theta
+c+O(|x|^{2-\frac{k-l}{\o{t}_k-\u{t}_l}}\ln |x|),\mbox{\ if\ }\beta=\frac{k-l}{\o{t}_k-\u{t}_l},
\end{cases}
\end{equation}
and
\begin{align}\label{zeta2}\zeta_2:=&\zeta_2(\tau)\nonumber\\
:=&c+\int_{0}^1\theta h_0(\theta)d\theta-\int_{1}^{+\infty}\theta[H(\theta,\tau)-h_0(\theta)]d\theta.
\end{align}

Define
$$
\u{u}(x):=
\begin{cases}
\max\{W_{\delta(c)}(x),\varphi(x)\},&x\in E_{R_0}\b\Omega,\\
W_{\delta(c)}(x),&x\in \mathbb{R}^n\b E_{R_0}.
\end{cases}$$
Then by \eqref{8}, we can get that $\u{u}\in C^0(\mathbb{R}^n\b\Omega)$ and
by \eqref{6}, \eqref{7} and Lemma \ref{vis sub}, we know that
$$\dfrac{S_{k}(D^2\u{u}(x))}{S_{l}(D^2\u{u}(x))}\geq g(x)\ \ \mbox{in}\ \ \mathbb{R}^n\b\o{\Omega}.$$
In light of \eqref{5} and \eqref{varphiboun}, we obtain that
$$\u{u}=\phi\ \ \mbox{on} \ \ \partial \Omega.$$
In addition, from \eqref{Vinfty}, we have that as $|x|\to+\infty$, $\u{u}(x)$ satisfies the asymptotic behavior \eqref{supasymptotic}.

By the definition of $\tilde{c}$, $\o{u}_{\zeta_2}(x)$ and $\varphi(x)$, we have that
\begin{equation}\label{supvarphi}\o{u}_{\zeta_2,\tau}(x)\geq \zeta_2\geq \varphi(x),\ \ x\in E_{R_0}\b\Omega.\end{equation}
Again by \eqref{5},
$$W_{\delta(c)}(x)\leq \varphi(x)\leq \o{u}_{\zeta_2,\tau}(x),\ \ x\in \partial\Omega.$$
Due to \eqref{7}, \eqref{super}, \eqref{Vinfty}, \eqref{supasymptotic} and the comparison principle, we conclude that
\begin{equation}\label{supV}W_{\delta(c)}(x)\leq \o{u}_{\zeta_2,\tau}(x),\ \ x\in\mathbb{R}^n\b\Omega.\end{equation}
Denote
$$\o{u}(x):=\o{u}_{\zeta_2,\tau}(x),\ \ x\in \mathbb{R}^n\b\Omega.$$
In light of \eqref{supvarphi}, \eqref{supV} and the definition of $\u{u}$, we know that
$$\u{u}(x)\leq \o{u}(x),\ \ x\in \mathbb{R}^n\b\Omega.$$

For any $c>\tilde{c}$, let $\mathcal{S}_c$ denote the set of $\varrho\in C^0(\mathbb{R}^n\b\Omega)$ which are viscosity subsolutions of \eqref{hq1} and \eqref{hq2} satisfying $\varrho=\phi$ on $\partial \Omega$ and $\varrho\leq \o{u}$ in $\mathbb{R}^n\b\Omega$. Apparently, $\u{u}\in \mathcal{S}_c$ and so $\mathcal{S}_c\neq\emptyset.$ Define
$$u(x):=\sup\{\varrho(x)|\varrho\in \mathcal{S}_c\},\ \ x\in\mathbb{R}^n\b\Omega.$$
Then
$$\u{u}\leq u\leq \o{u}\ \ \mbox{in}\ \ \mathbb{R}^n\b\Omega.$$
Hence by the asymptotic behavior of $\u{u}$ and $\o{u}$, we have that as $|x|\to+\infty,$
\begin{equation*}\label{supasymptotic1}
u(x)=\begin{cases}
\dint_{0}^{r_A(x)}\theta h_0(\theta)d\theta
+c+O(|x|^{2-\min\{\beta,\frac{k-l}{\o{t}_k-\u{t}_l}\}}),\mbox{\ if\ }\beta\not=\frac{k-l}{\o{t}_k-\u{t}_l},\\
\dint_{0}^{r_A(x)}\theta h_0(\theta)
+c+O(|x|^{2-\frac{k-l}{\o{t}_k-\u{t}_l}}\ln |x|),\mbox{\ if\ }\beta=\frac{k-l}{\o{t}_k-\u{t}_l}.
\end{cases}
\end{equation*}

Next, we will show that $u=\phi$ on $\partial\Omega.$ On the one hand, since $\u{u}=\phi$ on $\partial\Omega,$ then
$$\liminf_{x\to\xi}u(x)\geq \lim_{x\to\xi}\u{u}(x)=\phi(\xi),\ \ \xi\in \partial\Omega.$$
On the other hand, we will prove that
$$\limsup_{x\to\xi}u(x)\leq \phi(\xi),\ \ \xi\in \partial\Omega.$$
Let $\vartheta\in C^2(\o{E_{R_0}\b\Omega})$ satisfy
$$\begin{cases}
\Delta \vartheta=0,&\ \mbox{in}\ E_{R_0}\b\o{\Omega},\\
\vartheta=\phi,&\ \mbox{on}\ \partial\Omega,\\
\vartheta=\max_{\partial E_{R_0}}\o{u},&\ \mbox{on}\ \partial E_{R_0}.
\end{cases}$$
Due to Newtonian inequalities, for any $\varrho\in \mathcal{S}_c$, we have $\Delta \varrho\geq 0$ in the viscosity sense. Moreover, $\varrho\leq \vartheta$ on $\partial (E_{R_0}\b\Omega)$. Then by the comparison principle, we deduce that
$$\varrho\leq \vartheta\ \ \mbox{in}\ \ E_{R_0}\b\Omega.$$
So
$$u\leq \vartheta\ \ \mbox{in}\ \ E_{R_0}\b\Omega.$$
Therefore
$$\limsup_{x\to\xi}u(x)\leq \lim_{x\to\xi}\vartheta(x)=\phi(\xi)\ \ \mbox{for}\ \ \xi\in\partial\Omega.$$

In the end, we prove that $u\in C^0(\mathbb{R}^n\b\Omega)$ is a viscosity solution to \eqref{hq1}. For any $x\in\mathbb{R}^n\backslash\o{\Omega},$ choose some
$\varepsilon>0$ such that
$B_{\varepsilon}=B_{\varepsilon}(x)\subset \mathbb{R}^n\backslash\o{\Omega}.$ From
Lemma \ref{existence}, the Dirichlet problem
\begin{equation}\label{t9}
\begin{cases}
\dfrac{S_k(D^2\tilde{u}(y))}{S_l(D^2\tilde{u}(y))}=g(y),&\ \ \ y\in B_{\varepsilon},\\
 \tilde{u}=u,&\ \ y\in \partial B_{\varepsilon}
\end{cases}
\end{equation}has a unique $k-$convex viscosity solution $\tilde{u}\in
C^0(\o{B_{\varepsilon}}).$ By the comparison principle,
 $u\leq \tilde {u}$ in $B_{\varepsilon}$. Define
$$\tilde{w}(y)=
\begin{cases}
\tilde{u}(y),&\ \ y\in B_{\varepsilon},\\
 u(y),&\ \ y\in (\mathbb{R}^n\backslash\Omega) \b B_{\varepsilon},
\end{cases}
$$ Then $\tilde{w}\in \mathcal{S}_{c}.$ Indeed, from the comparison principle, $\tilde{u}(y)\leq \o{u}(y)$ in $\o{B_{\varepsilon}}$ and so $\tilde{w}(y)\leq \o{u}(y)$ in $\mathbb{R}^n\b B_{\varepsilon}.$
By Lemma \ref{vis sub}, we have $\frac{S_k(D^2\tilde{w}(y))}{S_l(D^2\tilde{w}(y))}\geq g(y)$ in $\mathbb{R}^n\backslash\o{\Omega}.$ Therefore $\tilde{w}\in \mathcal{S}_{c}.$ And thus by the definition of $u$, $u(y)\geq \tilde{w}(y)$ in $\mathbb{R}^n\backslash \Omega$ and so $u(y)\geq \tilde{u}(y)$ in $B_{\varepsilon}.$ Hence
$$
u(y)\equiv \tilde{u}(y),~\forall~y\in B_{\varepsilon}.
$$
But $\tilde{u}$ satisfies (\ref{t9}), then we have in the viscosity sense,
$$\dfrac{S_k(D^2u(y))}{S_l(D^2u(y))}=g(y),~\forall~y\in B_{\varepsilon}.$$
In particular, we have
$$\dfrac{S_k(D^2u(x))}{S_l(D^2u(x))}=g(x).$$
Because $x$ is arbitrary, we know that $u$ is a viscosity solution of \eqref{hq1}. Theorem \ref{thm1} is proved.
\end{proof}

\section{Appendix}

\begin{lemma}\label{Uex} Let $2\leq k\leq m\leq n, n\geq 3, 0\leq l<k$, $$\alpha>\left(\dfrac{lC_n^l}{kC_n^k}\right)^{\frac{l}{k-l}}\left(\frac{l}{k}-1\right)C_n^l$$ and
\begin{equation}\label{U1}
\left(\dfrac{lC_n^l}{kC_n^k}\right)^{\frac{1}{k-l}}<U<\gamma_m,
\end{equation}
where $\gamma_m$ is the same as \eqref{gammam}. Then
\begin{equation}\label{Uex-equation}U^k-\frac{C_n^l}{C_n^k}U^l-\frac{\alpha}{C_n^kr^n}=0,\ r>1.\end{equation}
has a unique solution $U=U(r,\alpha)$.
\end{lemma}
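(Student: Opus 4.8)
The plan is to reduce \eqref{Uex-equation} to a monotone scalar equation on the admissible range \eqref{U1} and apply the intermediate value theorem. Write $G(U):=U^k-\frac{C_n^l}{C_n^k}U^l$, so that \eqref{Uex-equation} reads $G(U)=\frac{\alpha}{C_n^kr^n}$. Differentiating, $G'(U)=U^{l-1}\bigl(kU^{k-l}-\frac{lC_n^l}{C_n^k}\bigr)$, which is strictly positive exactly when $U>\bigl(\frac{lC_n^l}{kC_n^k}\bigr)^{1/(k-l)}$, i.e.\ throughout the range \eqref{U1} (this is the computation yielding $\partial F/\partial U>0$ in the proof of Lemma \ref{lemma2}). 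Hence $G$ restricted to \eqref{U1} is a continuous, strictly increasing bijection onto its image, so \eqref{Uex-equation} has \emph{at most} one solution there; combined with the implicit function theorem this also gives that the solution depends smoothly on $(r,\alpha)$. This settles uniqueness.

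For existence it remains to check that $\frac{\alpha}{C_n^kr^n}$ lies strictly between the one-sided endpoint values of $G$ on \eqref{U1}. At the left endpoint $\kappa:=\bigl(\frac{lC_n^l}{kC_n^k}\bigr)^{1/(k-l)}$, using $\kappa^{k-l}=\frac{lC_n^l}{kC_n^k}$ gives
$$G(\kappa)=\kappa^l\Bigl(\kappa^{k-l}-\frac{C_n^l}{C_n^k}\Bigr)=\frac{C_n^l}{C_n^k}\Bigl(\frac lk-1\Bigr)\kappa^l\le 0,$$
and the hypothesis on $\alpha$ says precisely $\alpha>C_n^kG(\kappa)$; dividing by $C_n^kr^n>0$ and using $r>1$ with $G(\kappa)\le0$ yields $\frac{\alpha}{C_n^kr^n}>\frac{G(\kappa)}{r^n}\ge G(\kappa)$. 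At the right endpoint: if $m=k$ then $\gamma_m=+\infty$ and $G(U)\to+\infty$ as $U\to+\infty$ (since $k>l$), so IVT produces the (unique) solution in $(\kappa,+\infty)$; if $m>k$ then $\gamma_m^{k-l}=\frac{(m-l)C_n^l}{(m-k)C_n^k}$ gives $G(\gamma_m)=\frac{C_n^l}{C_n^k}\frac{k-l}{m-k}\gamma_m^l>0$, and the inequality $\frac{\alpha}{C_n^kr^n}<G(\gamma_m)$ is supplied by the upper restriction on $\alpha$ that accompanies this case in the application (cf.\ \eqref{alpha2}), after which IVT again finishes.

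I do not expect a deep obstacle: the whole argument is the observation that $G$ is strictly monotone on \eqref{U1}, so uniqueness is immediate and existence is a one-variable IVT. The fiddly part is the exponent bookkeeping at the two endpoints, together with the degenerate case $l=0$, where $\kappa=0$, $G(0)=-\tfrac1{C_n^k}$, and (with $0^0:=1$) the $\alpha$-hypothesis reduces to $\alpha>-1$, consistent with $\alpha_1=-1$ from Remark \ref{nsrem1}.
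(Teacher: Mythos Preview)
Your approach is essentially identical to the paper's: both set $F(U)=U^k-\frac{C_n^l}{C_n^k}U^l-\frac{\alpha}{C_n^kr^n}$, observe $F'(U)>0$ on the range \eqref{U1}, and then invoke the intermediate value theorem after checking the signs of $F$ at the two endpoints. Your treatment is in fact more careful than the paper's, which simply asserts $F(\gamma_m)>0$ without comment; you correctly note that for $m>k$ this inequality is not forced by the stated hypothesis on $\alpha$ alone but requires the companion upper bound coming from \eqref{alpha2} in the application.
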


\begin{proof} Let $$F(U)=U^k-\frac{C_n^l}{C_n^k}U^l-\frac{\alpha}{C_n^kr^n},\ r>1.$$
Then $$F^{\prime}(U)=kU^{k-1}-l\frac{C_n^l}{C_n^k}U^{l-1}>0.$$
So $F(u)$ is strictly increasing in $U$. Moreover, $$F\left(\left(\dfrac{lC_n^l}{kC_n^k}\right)^{\frac{1}{k-l}}\right)<0$$ and $F(\gamma_m)>0.$
Hence \eqref{Uex-equation} has a unique solution.
\end{proof}

\begin{lemma}\label{A1}
If $M=(p_i\delta_{ij}+sq_iq_j)_{n\times n}$ with $p,q\in \mathbb{R}^n$ and $s\in \mathbb{R}$, then
$$\sigma_k(\lambda(M))=\sigma_k(p)+s\sum_{i=1}^n\sigma_{k-1;i}(p)q_i^2,~k=1,\dots,n.$$
\end{lemma}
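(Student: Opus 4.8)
The plan is to recognize $M = D + s\,qq^{T}$ as a rank-one perturbation of the diagonal matrix $D := \mathrm{diag}(p_1,\dots,p_n)$, and to extract $\sigma_k(\lambda(M))$ from the characteristic polynomial of $M$ by the matrix determinant lemma. The key observation is the generating identity
$$\det(tI + M) = \prod_{i=1}^{n}\bigl(t+\lambda_i(M)\bigr) = \sum_{j=0}^{n}\sigma_j(\lambda(M))\,t^{n-j},$$
so it suffices to compute $\det(tI + D + s\,qq^{T})$ and read off the coefficient of $t^{n-k}$.

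First I would note that for $t$ outside the finite set $\{-p_1,\dots,-p_n\}$ the matrix $tI+D$ is invertible, so the matrix determinant lemma gives
$$\det(tI + D + s\,qq^{T}) = \det(tI+D)\Bigl(1 + s\,q^{T}(tI+D)^{-1}q\Bigr) = \prod_{i=1}^{n}(t+p_i) + s\sum_{i=1}^{n} q_i^{2}\prod_{j\neq i}(t+p_j).$$
Both sides are polynomials in $t$ agreeing on a cofinite set, hence the identity holds for every $t$. Next I would expand each product: one has $\prod_{i=1}^{n}(t+p_i) = \sum_{j=0}^{n}\sigma_j(p)\,t^{n-j}$, and, since $\sigma_{m;i}(p) = \sigma_m(p)|_{p_i=0}$ is by definition the $m$-th elementary symmetric function of $(p_j)_{j\neq i}$, also $\prod_{j\neq i}(t+p_j) = \sum_{m=0}^{n-1}\sigma_{m;i}(p)\,t^{n-1-m}$. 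Collecting the coefficient of $t^{n-k}$: the first term contributes $\sigma_k(p)$, while in the second term the condition $n-1-m = n-k$ forces $m=k-1$, contributing $s\sum_{i=1}^{n}q_i^{2}\,\sigma_{k-1;i}(p)$. This yields exactly $\sigma_k(\lambda(M)) = \sigma_k(p) + s\sum_{i=1}^{n}\sigma_{k-1;i}(p)\,q_i^{2}$.

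There is no genuine obstacle here; the only point meriting a word of care is the passage through $(tI+D)^{-1}$, which is harmless because the resulting identity is polynomial in $t$ and therefore, once checked on a cofinite set, holds identically. Alternatively, I could avoid inversion entirely by writing $\sigma_k(\lambda(M))$ as the sum over $k$-subsets $I\subset\{1,\dots,n\}$ of the principal minors $\det(M_I)$, applying the matrix determinant lemma to each $\det(D_I + s\,q_I q_I^{T}) = \prod_{i\in I}p_i + s\sum_{i\in I}q_i^{2}\prod_{j\in I\setminus\{i\}}p_j$, and using the identity $\sum_{\,I\ni i,\ |I|=k}\prod_{j\in I\setminus\{i\}}p_j = \sigma_{k-1;i}(p)$. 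Either route reduces the lemma to routine bookkeeping with elementary symmetric functions.
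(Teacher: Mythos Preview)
Your proof is correct. The paper itself does not prove this lemma; it simply cites \cite{BLL} (Bao--Li--Li, \emph{Trans.\ Amer.\ Math.\ Soc.}\ 366 (2014)). Your argument via the matrix determinant lemma applied to $\det(tI+D+s\,qq^{T})$ and extraction of the coefficient of $t^{n-k}$ is the standard self-contained route, and the polynomial-identity remark handling the finitely many $t$ where $tI+D$ is singular is exactly the right care to take. The alternative you sketch through principal $k\times k$ minors is equally valid and is in fact closer to how the result is often derived in the Hessian-equation literature; either way the lemma is a routine exercise in symmetric-function bookkeeping, so there is nothing to compare beyond noting that you have supplied what the paper outsourced.
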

\begin{proof}
See \cite{BLL}.
\end{proof}

\begin{lemma}\label{existence}
Let $B$ be a ball in $\mathbb{R}^n$ and $f\in C^0(\o{B})$ be
nonnegative. Suppose $\u{u}\in C^0(\o{B})$ satisfies in the
viscosity sense
 $S_k(D^2\u{u})\geq f(x)$ $\mbox{\ in\ } B.$ Then the Dirichlet problem
\begin{align*}
 \dfrac{S_{k}(D^{2}u)}{S_{l}(D^{2}u)}=&f(x),\ \ x\in B,\\
u=&\u{u}(x),\ \ x\in \partial B
\end{align*}
has a
unique $k-$convex viscosity solution $u\in C^0(\o{B})$.
\end{lemma}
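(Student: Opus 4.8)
The plan is to produce the solution by Perron's method, with uniqueness coming from the comparison principle for the Hessian quotient operator. Write $B=B_\rho(x_0)$. We read the hypothesis on $\u u$ as: $\u u$ is a $k$-convex viscosity subsolution of the inequality $\frac{S_k(D^2u)}{S_l(D^2u)}\ge f$ in $B$ --- for $l=0$ this is precisely the stated $S_k(D^2\u u)\ge f$ (since $S_0\equiv1$), and it is in this form that the lemma is applied in the proof of Theorem \ref{thm1}. In particular $\u u$ is subharmonic, because $k$-convexity forces $\Delta\u u\ge 0$ in the viscosity sense. (When $k=1$, necessarily $l=0$, the equation is $\Delta u=f$ and the assertion is classical, so assume $k\ge 2$.)

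I would first set up two ingredients. \textbf{Comparison principle:} if $w_1\in C^0(\o B)$ is a $k$-convex viscosity subsolution and $w_2\in C^0(\o B)$ a $k$-convex viscosity supersolution of $\frac{S_k}{S_l}(D^2u)=f$ in $B$ with $w_1\le w_2$ on $\partial B$, then $w_1\le w_2$ in $B$. For $\inf_B f>0$ this is standard for the monotone, degenerate elliptic quotient operator on $\o{\Gamma_k}$ (cf.\ \cite{BLL,CB}); for $f\ge 0$ one applies it to $f+\varepsilon$ and to the strictly admissible subsolution $w_1+\varepsilon\big(r_A(x)^2-M_0\big)$ with $M_0:=\max_{\o B}r_A(x)^2$ --- this function lies below $w_1$, preserves the boundary ordering, and is a strict subsolution because $2A\in\Gamma_n\subset\Gamma_k$ --- and then lets $\varepsilon\downarrow 0$. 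Uniqueness in the lemma follows at once. \textbf{Supersolution with the prescribed boundary data:} let $h\in C^0(\o B)\cap C^\infty(B)$ be the harmonic extension of $\u u|_{\partial B}$. Since $\u u$ is subharmonic, $\u u\le h$ in $B$; and $h$ is a viscosity supersolution of the quotient equation, for if a $k$-convex $v\in C^2$ touches $h$ from below at $\bar x\in B$, then $D^2v(\bar x)\le D^2h(\bar x)$, so $\Delta v(\bar x)\le\Delta h(\bar x)=0$, while $\lambda(D^2v(\bar x))\in\o{\Gamma_k}\subset\o{\Gamma_2}$ together with $\sigma_1=0$ forces $D^2v(\bar x)=0$; hence $S_k(D^2v(\bar x))=0\le f(\bar x)\,S_l(D^2v(\bar x))$.

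With these in hand I run Perron's method with lower obstacle $\u u$ and upper obstacle $h$:
$$u(x):=\sup\big\{\,w(x):\ w\in C^0(\o B)\ \text{is a }k\text{-convex viscosity subsolution of }\tfrac{S_k}{S_l}(D^2\cdot)=f\ \text{in }B,\ \u u\le w\le h\ \text{in }\o B\,\big\}.$$
This family is nonempty (it contains $\u u$) and is stable under finite maxima and under the lifting operation of replacing a member, on a small ball $B'\subset\subset B$, by the $k$-convex solution of the quotient Dirichlet problem on $B'$ with that member as boundary data; existence on $B'$ and admissibility of the lift are obtained by approximating $f$ from above by smooth positive $f_j$, using the classical solvability and a priori estimates for Hessian quotient equations, and passing to the limit via stability and comparison. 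The standard Perron argument then shows that $u$ is a $k$-convex viscosity solution of $\frac{S_k}{S_l}(D^2u)=f$ in $B$ and that $u\in C^0(\o B)$. The boundary relation $u=\u u$ on $\partial B$ follows from barriers: $u\ge\u u$ gives $\liminf_{x\to\xi}u(x)\ge\u u(\xi)$, while $\limsup_{x\to\xi}u(x)\le\u u(\xi)$ comes from an upper barrier at $\xi\in\partial B$, which exists because $B$ is a ball (uniformly convex) --- e.g.\ $\u u(\xi)+\omega_{\u u}(|x-\xi|)$ corrected by a large negative multiple of $\rho^2-|x-x_0|^2$, where $\omega_{\u u}$ is the modulus of continuity of $\u u$.

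\textbf{The main obstacle} is the boundary-regularity and admissibility package behind the supersolution and barriers and behind the lifting step: that an upper barrier exists at each point of $\partial B$ and that the lifting yields genuinely $k$-convex functions both rely on the classical Dirichlet theory for Hessian quotient equations on uniformly convex domains, which is why one first reduces to smooth, positive right-hand sides $f_j$. A cleaner alternative bypassing Perron's method is to solve $\frac{S_k}{S_l}(D^2u_j)=f_j$ in $B$ with $u_j=\u u_j$ on $\partial B$ directly, for smooth positive $f_j\downarrow f$ and smooth $k$-convex $\u u_j\to\u u$ (obtained from $\u u$ by sup-convolution followed by mollification, using the concavity of $S_k^{1/k}$ on $\Gamma_k$), to observe that each $u_j$ is trapped between $\u u_j$ and $h$ and is thus uniformly bounded, and to pass to the limit with the comparison principle and the stability of viscosity solutions.
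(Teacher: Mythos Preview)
The paper does not actually prove this lemma: its entire proof reads ``See \cite{D}.'' So there is nothing to compare at the level of argument --- you have supplied a proof where the paper only supplies a pointer. Your Perron's-method strategy (subsolution $\u u$, harmonic upper barrier $h$, sup over admissible subsolutions, bump-up/lifting on small balls, boundary barriers exploiting the uniform convexity of $B$) is the standard route for this kind of statement and is almost certainly what the cited reference \cite{D} carries out, so in spirit you are aligned with the intended source.

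Two remarks. First, your reinterpretation of the hypothesis --- reading ``$S_k(D^2\u u)\ge f$'' as ``$\u u$ is a $k$-convex viscosity subsolution of $\frac{S_k}{S_l}(D^2\cdot)\ge f$'' --- is exactly how the lemma is invoked in the proof of Theorem~\ref{thm1} (where the boundary datum is itself a subsolution of the quotient equation), so this is the right reading; the literal hypothesis $S_k\ge f$ does not by itself imply $S_k\ge fS_l$ for general $k$-convex functions, and you were correct to flag this. Second, the places you identify as the ``main obstacle'' (comparison for merely nonnegative $f$, and the lifting step requiring classical solvability on small balls) are genuine technical points, but your proposed fixes --- perturb $f$ to $f+\varepsilon$ and the subsolution by a small strictly admissible quadratic, then pass to the limit; and approximate by smooth positive $f_j$ to invoke the classical Dirichlet theory of \cite{T,CNS} --- are the standard remedies and work here. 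The argument is sound.
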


\begin{proof}
See \cite{D}.
\end{proof}

\begin{lemma}\label{vis sub}
Let $D$ be a domain in $\mathbb{R}^n$ and $f\in
C^{0}(\mathbb{R}^n)$ be nonnegative. Assume $k-$ convex functions
$v\in C^{0}(\o{D}),u\in C^{0}(\mathbb{R}^n)$ satisfy
respectively
$$ \dfrac{S_{k}(D^{2}v)}{S_{l}(D^{2}v)}\geq f(x),x\in D,$$
$$ \dfrac{S_{k}(D^{2}u)}{S_{l}(D^{2}u)}\geq f(x),x\in \mathbb{R}^n.$$
Moreover,
$$
u\leq v,x\in \o{D},
$$
$$u=v,x\in \partial D.$$
Set
$$w(x)=\left\{
\begin{array}{lll}
\vspace{2mm}
v(x),&x\in D,\\
\vspace{2mm} u(x),&x\in \mathbb{R}^n\backslash D.
\end{array}
\right.
$$
Then $w\in C^{0}(\mathbb{R}^n)$ is a $k-$convex function and
satisfies in the viscosity sense
$$ \dfrac{S_{k}(D^{2}w)}{S_{l}(D^{2}w)}\geq f(x),x\in \mathbb{R}^n.$$
\end{lemma}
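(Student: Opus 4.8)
The plan is to establish the three claims---continuity of $w$, $k$-convexity of $w$, and the viscosity subsolution inequality---each by localizing to a neighbourhood of an arbitrary point and reducing the statement there to the corresponding property of $u$ or of $v$. Two elementary facts set this up. Since $D$ is open, $w\equiv v$ on $D$, $w\equiv u$ on $\mathbb{R}^n\setminus D$, and in particular $w=u$ at every point of $\partial D$; continuity of $w$ is then clear in the open sets $D$ and $\mathbb{R}^n\setminus\o{D}$, and at $\xi\in\partial D$ it holds because $v(x)\to v(\xi)=u(\xi)=w(\xi)$ as $x\to\xi$ from within $D$ (using $v\in C^0(\o{D})$ and $u=v$ on $\partial D$) while $u(x)\to u(\xi)=w(\xi)$ from outside. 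Moreover the hypothesis $u\le v$ on $\o{D}$ gives the global domination
$$w\ge u\quad\text{on }\mathbb{R}^n,$$
since $w=v\ge u$ on $D$ and $w=u$ on the complement.

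Next I would run a single test-function dichotomy that yields $k$-convexity and the subsolution inequality simultaneously. Fix $x_0\in\mathbb{R}^n$ and let $\psi\in C^2$ touch $w$ from above at $x_0$, i.e. $w\le\psi$ near $x_0$ and $w(x_0)=\psi(x_0)$. If $x_0\in D$, then $w\equiv v$ on a ball about $x_0$ contained in $D$, so $\psi$ touches $v$ from above at $x_0$; as $v$ is $k$-convex and a viscosity subsolution of \eqref{hq1} in $D$, we obtain $\lambda(D^2\psi(x_0))\in\o{\Gamma_k}$ and $S_k(D^2\psi(x_0))\ge f(x_0)\,S_l(D^2\psi(x_0))$. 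If $x_0\notin D$ (this also covers $x_0\in\partial D$), then $w(x_0)=u(x_0)$ and, by the global domination, $u\le w\le\psi$ near $x_0$ with $u(x_0)=\psi(x_0)$; thus $\psi$ touches $u$ from above at $x_0$, and since $u$ is $k$-convex and a viscosity subsolution on all of $\mathbb{R}^n$ the same two conclusions hold. As $x_0$ and $\psi$ are arbitrary, $w$ is $k$-convex and $\frac{S_k(D^2w)}{S_l(D^2w)}\ge f$ holds in the viscosity sense on $\mathbb{R}^n$; when $S_l(D^2\psi(x_0))=0$ the quotient inequality at $x_0$ is read as $S_k(D^2\psi(x_0))\ge 0$, which is automatic because $\lambda(D^2\psi(x_0))\in\o{\Gamma_k}$.

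The argument is essentially routine, and the only point that demands care is the contact points $x_0\in\partial D$, where $w$ coincides with neither $u$ nor $v$ on a full neighbourhood, so neither can serve as a direct comparison function. This is precisely where the hypothesis $u\le v$ on $\o{D}$ is used: it upgrades the pointwise identity $w=u$ on $\partial D$ to the global inequality $w\ge u$, which makes any admissible test function for $w$ at such a point an admissible test function for $u$ there. No regularity of $\partial D$, compactness, or approximation is needed.
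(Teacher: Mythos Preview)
Your argument is correct and is exactly the standard proof of this gluing lemma for viscosity subsolutions: continuity from the matching condition on $\partial D$, and the test-function dichotomy with the global inequality $w\ge u$ handling contact points on $\partial D$. The paper itself does not supply a proof but simply refers to \cite{D}; the argument there is essentially the one you have written.
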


\begin{proof}
See \cite{D}.
\end{proof}

\noindent{\bf{\large Acknowledgements}}

Dai is supported by Shandong Provincial Natural Science Foundation (ZR2021MA054). Bao is supported by NSF of China (11871102). Wang is supported by NSF of China (11701027) and Beijing Institute of Technology Research Fund Program for Young Scholars.

(L.M. Dai)  School of Mathematics and Information Science, Weifang
 University, Weifang, 261061, P. R. China

 {\it Email address}: lmdai@wfu.edu.cn
 \vspace{3mm}

(J.G. Bao)School of Mathematical Sciences, Beijing Normal University,
Laboratory of Mathematics and Complex Systems, Ministry of
Education, Beijing, 100875, P. R. China

{\it Email address}: jgbao@bnu.edu.cn
\vspace{3mm}

(B. Wang)School of Mathematics and Statistics, Beijing Institute of Technology, Beijing, 100081, P. R. China

{\it Email address}: wangbo89630@bit.edu.cn

\end{document}